\documentclass[preprint,12pt]{elsarticle}
\usepackage{amsfonts}
\usepackage{soul}
\usepackage{xcolor}
\usepackage{amssymb}
\usepackage{comment}
\usepackage{enumitem}
\usepackage{upgreek}
\usepackage{amsmath}
\usepackage{graphicx}
\usepackage{tikz-cd}
\usepackage{amscd}
\usepackage{endnotes}
\usepackage{tikz}
\usepackage{amsthm,mathtools}
\usepackage[colorlinks=true, linkcolor=blue, citecolor=blue, urlcolor=blue]{hyperref}
\usepackage[color,matrix,arrow,pdf]{xy}
\usepackage{xy}
\xyoption{all}
\usepackage{cancel}
\usetikzlibrary{positioning}
\usepackage[normalem]{ulem}

\newtheorem{theo}{Theorem}[section]
\newtheorem{cor}[theo]{Corollary}

\newtheorem{prop}[theo]{Proposition}

\theoremstyle{definition}
\newtheorem{defn}[theo]{Definition}

\theoremstyle{remark}
\newtheorem{rmk}[theo]{Remark}
\newtheorem{ex}[theo]{Example}

\numberwithin{equation}{section}

\definecolor{darkgreen}{RGB}{0,120,0}

\newcommand{\Q}{\mathcal{Q}_2}
\newcommand{\D}{\mathcal{D}_2}
\newcommand{\F}{\mathcal{F}_2}
\newcommand{\C}{\mathcal{O}_2}
\newcommand{\BB}{\mathcal{B}_2}

\newcommand{\K}{\mathcal{K}} 
\newcommand{\z}{\mathbf{Z}_2}

\newcommand{\E}{\mathbb{E}}

\newcommand{\bZ}{{\mathbb{Z}}}

\newcommand{\U}{{\mathcal{U}}}
\newcommand{\T}{{\mathcal{T}}}
\renewcommand{\O}{{\mathcal{O}}}
\newcommand{\id}{{\operatorname{id}}}

\journal{}
\makeatletter
\def\ps@pprintTitle{%
  \let\@oddhead\@empty
  \let\@evenhead\@empty
  \let\@oddfoot\@empty
  \let\@evenfoot\@empty
}
\makeatother

\begin{document}

\begin{frontmatter}

%% Title, authors and addresses

%% use the tnoteref command within \title for footnotes;
%% use the tnotetext command for theassociated footnote;
%% use the fnref command within \author or \affiliation for footnotes;
%% use the fntext command for theassociated footnote;
%% use the corref command within \author for corresponding author footnotes;
%% use the cortext command for theassociated footnote;
%% use the ead command for the email address,
%% and the form \ead[url] for the home page:
%% \title{Title\tnoteref{label1}}
%% \tnotetext[label1]{}
%% \author{Name\corref{cor1}\fnref{label2}}
%% \ead{email address}
%% \ead[url]{home page}
%% \fntext[label2]{}
%% \cortext[cor1]{}
%% \affiliation{organization={},
%%             addressline={},
%%             city={},
%%             postcode={},
%%             state={},
%%             country={}}
%% \fntext[label3]{}

\title{Endomorphisms of the 2-adic ring $C^*$-algebra
and\\ its Weyl group}
%{Endomorphisms of the 2-adic ring C*-algebra $\mathcal{Q}_2$}
%% use optional labels to link authors explicitly to addresses:
\author[a]{Dolapo Oyetunbi}
 \affiliation[a]{organization={Department of Mathematics and Statistics},
            addressline={ University of Windsor},
%%             city={},
             postcode={ ON N9B 3P4},
%             state={Ontario},
            country={Canada}}
%%
%% \affiliation[label2]{organization={},
%%             addressline={},
%%             city={},
%%             postcode={},
%%             state={},
%%             country={}}

\author[a]{Dilian Yang} %% Author name

%% Author affiliation
%\affiliation[b]{organization={Department of Mathematics and Statistics},
 %           addressline={ University of Windsor},
%%             city={},
%             postcode={ ON N9B 3P4},
%             state={Ontario},
%country={Canada}}

%% Abstract
\begin{abstract}
In this paper, we establish a one-to-one correspondence between a natural monoid and the endomorphisms of the $2$-adic ring $C^*$-algebra $\Q$. As a consequence, we classify endomorphisms of $\Q$ with prescribed images and derive several criteria for the uniqueness of extensions of endomorphisms of a canonical copy of the Cuntz algebra $\C$ inside $\Q$ to endomorphisms of $\Q$. Moreover, we construct an example of an extendable automorphism of $\C$ that is not a composition of the flip-flop automorphism, the gauge automorphisms, and inner automorphisms, thereby providing negative answers to certain open questions. Using this explicit construction, we also show that the canonical image of $\operatorname{Aut}(\Q,\C)$ in the outer automorphism group $\operatorname{Out}(\Q)$ is non-abelian.

Finally, we characterize the automorphisms of $\Q$ that globally preserve $C^*(u)$, and completely describe the Weyl group $\mathcal{W}(\Q, C^*(u))$. Consequently, several related open questions are answered affirmatively.
\end{abstract}

\begin{keyword}
2-adic ring \(C^*\)-algebra \sep endomorphism \sep extension\sep diagonal automorphism \sep Weyl group \sep Cartan subalgebra

%% PACS codes here, in the form: \PACS code \sep code

%% MSC codes here, in the form: \MSC code \sep code
%% or \MSC[2008] code \sep code (2000 is the default)

\MSC[2010] 46L05 

\end{keyword}

\end{frontmatter}

\section{Introduction}
Ever since Cuntz \cite{Cuntz77} introduced the Cuntz algebras $\mathcal{O}_n$ ($n\ge 2$), they have remained among the most important examples of infinite simple $C^*$-algebras and a rich source of intuition and counter-examples. Notably, Rosenberg \cite{Rosenberg77} proved that $\mathcal{O}_n$ is an amenable $C^*$-algebra that is not strongly amenable, thereby resolving a problem posed by Johnson \cite[10.2]{Johnson72}.

The Cuntz--Takesaki correspondence between the unitaries of the Cuntz algebra \(\mathcal{O}_n\) and its endomorphisms generated substantial interest in the structure of its endomorphisms (\cite{CHS12a, CHS12, CRS10, CS11, Cun80}). Applications of endomorphisms of $\mathcal{O}_n$ can be found in quantum field theory \cite{DR90} and the theory of wavelets \cite{BJ99}. Another important development in the study of automorphisms of \(\mathcal{O}_n\) is the Weyl group associated with the Cartan inclusion \(\mathcal{D}_n \subseteq \mathcal{O}_n\) (see \cite{CHS12a, CHS12} for example). Cuntz \cite{Cun80} showed that this Weyl group is discrete, and Conti and Szyma\'nski \cite{CS11} proved that it is related to the Higman--Thompson group inside \(\mathcal{O}_n\). The restricted Weyl group is studied in \cite{CHS12a}.

One way to view the $2$-adic ring $C^*$-algebra $\Q$ is as a symmetrized version of the Cuntz algebra $\C$ \cite{LL12}. Its canonical isometries satisfy the Cuntz relations together with an intertwining relation that prevents many structural phenomena possible in $\C$ from occurring in $\Q$. The canonical isometries generate a canonical copy of $\C$ inside $\Q$, and there is no conditional expectation from $\Q$ onto this copy \cite{ACR18}. This suggests a strong rigidity phenomenon for the inclusion $\C \subseteq \Q$.

Recently, Bassi and Conti \cite{BC2025} showed that the inclusion $\C \subseteq \Q$ is rigid, while in \cite{OY26} we proved that there are no proper intermediate $C^*$-subalgebras between $\C$ and $\Q$. Earlier, Aiello, Conti, and Rossi studied permutative endomorphisms of $\C$ that extend to endomorphisms of $\Q$ \cite{ACR20} and characterized certain classes of endomorphisms that fix specified generators of $\Q$ \cite{ACR18}. 

This paper addresses several important questions concerning endomorphisms and a Weyl group of $\Q$ that were posed in \cite{ACR21}.

Motivated by the Cuntz--Takesaki correspondence, we establish a one-to-one correspondence between a certain monoid and the endomorphisms of the $2$-adic ring $C^*$-algebra $\Q$. As a consequence, we obtain two classifications for endomorphisms of $\Q$. One of them generalizes \cite[Theorems 6.14 \& 6.18]{ACR18}.

\begin{theo}[See Theorem \ref{P:u-uni}]\label{P:u-uni_intro}
Let $\lambda_{(U, V)}$ be the endomorphism of $\mathcal{Q}_2$ determined by $(U,V)$. Then any endomorphism $\lambda\in \operatorname{End}(\mathcal{Q}_2)$ satisfies
$\lambda(u)=U$ if and only if $\lambda=\lambda_{(U,  XV)}$ for some unitary $X\in C^*(U)'\cap \mathcal{Q}_2$. 
\end{theo}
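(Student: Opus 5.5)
The plan is to work directly with the defining relations of $\Q$ and with the correspondence $(U,V)\mapsto\lambda_{(U,V)}$ set up earlier in the paper. I take that correspondence in the following form: $\lambda_{(U,V)}(u)=U$ and $\lambda_{(U,V)}(s_2)=Vs_2$. The pair $(U,V)$ lies in the monoid exactly when $W:=Vs_2$ and $U$ satisfy the relations of $\Q$:
\[
WU=U^2W,\qquad WW^*+UWW^*U^*=1 .
\]
Moreover, every endomorphism $\lambda$ arises this way from a unique pair, namely $U=\lambda(u)$ and $V=\lambda(s_2)s_2^*+\lambda(us_2)s_2^*u^*$. If the paper's normalization differs, for instance by placing $V$ on the other side, the argument adapts verbatim. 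Both directions of the statement reduce to checking these relations and comparing the resulting unitaries.

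\emph{``If'' direction.} Let $X\in C^*(U)'\cap\Q$ be a unitary. I will check that $(U,XV)$ again lies in the monoid. Put $W=Vs_2$. Since $X$ commutes with $U$, we get
\[
(XW)U=XU^2W=U^2(XW).
\]
For the range relation,
\[
XW(XW)^*+U\,XW(XW)^*\,U^*=X\bigl(WW^*+UWW^*U^*\bigr)X^*=XX^*=1 .
\]
Hence $\lambda_{(U,XV)}$ is a well-defined endomorphism, and by construction it sends $u$ to $U$.

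\emph{``Only if'' direction.} Suppose $\lambda(u)=U$, and write $\lambda=\lambda_{(U,V')}$ with $W'=V's_2=\lambda(s_2)$. Both $W$ and $W'$ satisfy the two relations above with the same $U$. Define
\[
X:=W'W^*+UW'W^*U^* .
\]

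First, $X$ is unitary. This is the standard Cuntz-type computation: both $\{W,UW\}$ and $\{W',UW'\}$ are pairs of isometries with orthogonal ranges summing to $1$, so $X$ carries one decomposition onto the other.

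Second, $XW=W'$ and $XUW=UW'$. This uses $W^*W=1$ together with $W^*UW=0$, and the latter follows from orthogonality of the ranges of $W$ and $UW$.

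Third, and this is the key step, $X$ commutes with $U$. I multiply $XU$ on the right by $1=WW^*+UWW^*U^*$:
\[
XU=XUWW^*+XU^2WW^*U^* .
\]
The first term equals $UW'W^*$. For the second term, use $U^2W=WU$ to write $XU^2W=XWU=W'U=U^2W'$, so the second term equals $U^2W'W^*U^*$. This gives exactly $UX$.

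Commutation with the generator $U$ (and with $U^*$, by unitarity) then gives $X\in C^*(U)'\cap\Q$. Finally, $XV s_2=XW=W'=V's_2$. By the uniqueness part of the correspondence, and using $XUW=UW'$ for the second summand in the formula for $V$, this yields $V'=XV$, so $\lambda=\lambda_{(U,XV)}$.

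I expect the main obstacle to be the commutation $XU=UX$. This is where the intertwining relation $s_2u=u^2s_2$ must be used; everything else is routine. The other point that needs care is aligning with the exact normalization of $V$ in the earlier correspondence theorem, so that $V'=XV$ holds on the nose and not merely modulo the choice of $V$.
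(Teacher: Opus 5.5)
Your proof is correct, but it checks by hand what the paper gets almost for free from the form of $\mathcal{S}_{\Q}$ in Theorem~\ref{T:endZ}.

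\textbf{The paper's route.} There, $(U,V)\in\mathcal{S}_{\Q}$ means $V^*UV=W_U=s_1s_2^*+s_2Us_1^*$, and $W_U$ depends only on $U$. So if $\lambda=\lambda_{(U,V')}$ also sends $u$ to $U$, then $V^*UV=V'^*UV'$. This says exactly that $X:=V'V^*$ commutes with $U$. Conversely, for a unitary $X\in C^*(U)'$ one has $(XV)^*U(XV)=V^*UV=W_U$.

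\textbf{Your route.} Your $X=W'W^*+UW'W^*U^*$ is the same element. Indeed, $V=Ws_2^*+UWs_1^*$ and $V'=W's_2^*+UW's_1^*$, so $V'V^*=W'W^*+UW'W^*U^*$. Your unitarity check, the identities $XW=W'$ and $XUW=UW'$, and the commutation computation via $U^2W=WU$ therefore re-derive the intertwining relation $UV=VW_U$ directly from the defining relations of $\Q$. That makes your argument self-contained and elementary. The paper's version is a one-liner because the intertwiner formulation has already absorbed that computation.

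\textbf{One imprecision to fix.} A pair $(U,V)$ is not in $\mathcal{S}_{\Q}$ merely because $Vs_2$ and $U$ satisfy the relations of $\Q$. You also need $Vs_1=UVs_2$, equivalently $V=Ws_2^*+UWs_1^*$; otherwise the endomorphism built from $(U,Vs_2)$ corresponds to a different unitary. So in the ``if'' direction, add the line
\[
XV=(XW)s_2^*+U(XW)s_1^*,
\]
which holds because $X$ commutes with $U$. Only then is the endomorphism you obtain from $(U,XW)$ equal to $\lambda_{(U,XV)}$. Your ``only if'' direction already handles this correctly, through $XUW=UW'$.
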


Next, we provide several criteria for the uniqueness of extensions of endomorphisms of $\C$ to $\Q$. In particular, we show that every permutative endomorphism of $\C$ as well as every endomorphism that globally preserves the diagonal $\D$, admits at most one extension to an endomorphism of $\Q$.

\begin{theo}[See Theorem \ref{thm:uniqueness_criteria2} and Proposition \ref{prop:uniqueness}]
Let $\lambda_w \in \operatorname{End}(\C)$ satisfy one of the following conditions:
\begin{enumerate}
\item $\lambda_w(\F) \subseteq \F$,
\item $\lambda_w(\D) = \D$.
\end{enumerate}
Then $\lambda_w$ admits at most one extension to an endomorphism of $\Q$. Moreover, if $\lambda_w$ has a unique extension $\tilde{\lambda}_w$, then
\[
\lambda_w(\mathcal{O}_2)' \cap \mathcal{Q}_2 \subseteq C^*(\tilde{\lambda}_w(u))' \cap \mathcal{Q}_2 .
\]
\end{theo}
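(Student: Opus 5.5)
The plan rests on the correspondence of Theorem \ref{P:u-uni}. An endomorphism of $\Q$ is $\lambda_{(U,V)}$ for a pair $(U,V)$ with $U=\lambda(u)$ and $V=\lambda(s_2)$. Since $s_1=us_2$, the restriction of $\lambda$ to $\C$ determines $V=\lambda_w(s_2)$ and $\lambda(s_1)=\lambda_w(s_1)$. Hence two extensions $\tilde\lambda,\tilde\lambda'$ of $\lambda_w$ differ only in $U=\tilde\lambda(u)$ and $U'=\tilde\lambda'(u)$. Both satisfy $U\lambda_w(s_2)=\lambda_w(s_1)=U'\lambda_w(s_2)$.

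Put $Z=U'U^*$, a unitary in $\Q$. From $Z\lambda_w(s_2)=\lambda_w(s_2)$ we get that $Z$ fixes the range of $\lambda_w(s_2)\lambda_w(s_2)^*$. We also want $Z$ to commute with $\lambda_w(\C)$. To see this, use the relations $us_1=s_2u$ and $us_2=s_1$ (equivalently $u s_i u^* $ expressed in $s_j$). Applying both extensions gives $U\lambda_w(s_i)U^*=\lambda_w(x_i)=U'\lambda_w(s_i)U'^*$, where $x_i\in\C$ is the same expression. Therefore $Z$ commutes with $U\lambda_w(\C)U^*$. More simply, $\operatorname{Ad}U$ and $\operatorname{Ad}U'$ agree on $\lambda_w(\C)$, so $U^*U'\in\lambda_w(\C)'\cap\Q$.

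So the extensions are parametrized by unitaries $X=U^*U'\in\lambda_w(\C)'\cap\Q$ with $U'=UX$. These must satisfy $UX\lambda_w(s_2)=\lambda_w(s_1)$, which forces $X\lambda_w(s_2)=\lambda_w(s_2)$, i.e.\ $X\lambda_w(s_2s_2^*)=\lambda_w(s_2s_2^*)$. Since $X$ commutes with $\lambda_w(\C)$, we get $X\lambda_w(s_2 a)=\lambda_w(s_2a)$ for all $a\in \C$. Thus $X$ acts as the identity on $\lambda_w(s_2)\lambda_w(\C)$. Conjugating by $\lambda_w(s_1)$, which commutes with $X$, moves this to the other corner: $X\lambda_w(s_1s_2)=\lambda_w(s_1)X\lambda_w(s_2)=\lambda_w(s_1s_2)$. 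Iterating gives $X\lambda_w(s_\alpha s_2)=\lambda_w(s_\alpha s_2)$ for every word $\alpha$. The remaining defect is concentrated on $\lambda_w(s_1^n s_1^{*n})$, whose norm-limit behaviour we must control.

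The key step, and the main obstacle, is to show that $X=1$ under hypothesis (1) or (2). The intended route is to show that $X$ lies in a commutative algebra where it is determined by its values on these projections. Under (2), $X$ commutes with $\lambda_w(\D)=\D$, so $X\in\D'\cap\Q$. I would invoke that $\D$ (or $C^*(\D,u)$-type results in the paper) is maximal abelian in $\Q$, giving $\D'\cap\Q=\D$. Then $X$ is a continuous function on the Cantor set that equals $1$ on the dense union of the cylinder sets $\alpha 2$, so $X=1$. Under (1), $\lambda_w(\F)\subseteq\F$ lets us use the commutation with $\lambda_w(\F)$ and the relation $X=U^*U'$ to show that $X$ also commutes with $u$. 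Then $X$ lies in the centre of $C^*(\lambda_w(\C),u)$, and I would try to show $X\in\D$ and conclude by density as before. This is the step I would check most carefully.

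For the "moreover" inclusion, take $Y\in\lambda_w(\C)'\cap\Q$ unitary. Then $\tilde\lambda_w':=\operatorname{Ad}Y\circ\tilde\lambda_w$ is another extension of $\lambda_w$ (extended to unitaries by linearity and unitary decomposition). By uniqueness it equals $\tilde\lambda_w$, so $Y$ commutes with $\tilde\lambda_w(u)$. Since unitaries span, this proves the inclusion.
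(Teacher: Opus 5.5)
\noindent Your proof has a genuine gap: it breaks at the claim that $\operatorname{Ad}U$ and $\operatorname{Ad}U'$ agree on $\lambda_w(\C)$, i.e.\ that $X=U^*U'\in\lambda_w(\C)'\cap\Q$. Of the two relations you invoke, only $us_1u^*=s_2$ stays inside $\C$. The other gives $us_2u^*=s_1u^*$, which is not in $\C$ (otherwise $u^*=s_1^*(s_1u^*)\in\C$). So, writing $T_i=\lambda_w(s_i)$, you only learn $UT_1U^*=T_2=U'T_1U'^*$. Combined with $UT_2=T_1=U'T_2$, this is exactly Eq.~\eqref{E:u1*u2}: $XT_1=T_1X$ and $XT_2=T_2$. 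Nothing forces $X$ to commute with $T_2$, and in fact that commutation is equivalent to the conclusion. If $T_2X=T_2$, then $X=T_2^*T_2X=1$ at once, for every extendable $\lambda_w$ and without using (1) or (2). So the reduction is circular, and everything after it rests on it: the passage to $\D'\cap\Q$ in case (2), and the ``centre of $C^*(\lambda_w(\C),u)$'' in case (1). Case (1) also has no actual mechanism. The hypothesis $\lambda_w(\F)\subseteq\F$ gives no control over $\lambda_w(\D)$, and no reason is given for $X$ to commute with $u$.

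\noindent What is missing is the analytic estimate that kills the defect you correctly located under $\lambda_w(s_1^ns_1^{*n})$. The correct relations give $X=T_1XT_1^*+T_2T_2^*$, hence $X-1=T_1^n(X-1)T_1^{*n}$ for all $n$. So for any faithful state $\psi$ on $\Q$ one has $\psi((X-1)^*(X-1))\le 4\,\psi(T_1^nT_1^{*n})$. In case (1), take $\psi=\tau\circ\E$ with $\tau$ the trace on $\BB$. Then $\tau\circ\lambda_w$ is a tracial state on $\F$, so uniqueness of the trace gives $\psi(T_1^nT_1^{*n})=2^{-n}\to 0$, and hence $X=1$. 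In case (2) the same works with $\psi$ built from the conditional expectation onto $\D$. This is the paper's proof, phrased there through the expansion \eqref{E:4.1} of $\tilde\lambda_i(u)$.

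\noindent Case (2) can alternatively be rescued along your own lines, but it needs a different justification than the one you gave. Each projection $T_\mu T_\mu^*$ is either $T_1^nT_1^{*n}$, which commutes with $X$, or has $\mu$ containing a $2$. In the latter case $XT_\mu T_\mu^*=T_\mu T_\mu^*=T_\mu T_\mu^*X$. Hence $X\in\lambda_w(\D)'=\D'\cap\Q=\D$, and your density argument on $\z$ then applies. Your argument for the ``moreover'' inclusion via $\operatorname{Ad}Y\circ\tilde\lambda_w$ is correct and matches Proposition~\ref{prop:uniqueness}.
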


In \cite{ACR18b}, Aiello, Conti, and Rossi proved that every extendable localized diagonal automorphism of $\C$ is exactly the product of a gauge automorphism and a localized inner diagonal automorphism. They further observed that their methods could not be extended to the general setting. We resolve this question by showing that the above characterization fails for general extendable diagonal automorphisms. This construction provides negative answers to several related open questions in \cite{ACR21}.

\begin{theo}[See Theorem \ref{T:lambdad}]\label{T:lambdad1}
There exists a unitary \( d \in \D \) such that the corresponding automorphism \( \lambda_d \in \operatorname{Aut}(\C) \) satisfies the following properties:
\begin{enumerate}
\item \( \lambda_d \) extends to an automorphism of \( \Q \);
\item \( \lambda_d(x) = x \) for all \( x \in C^*(\D, s_2) \);
\item \( \lambda_d \) does not lie in the subgroup of \( \operatorname{Aut}(\C) \) generated by the flip-flop \( \sigma \), 
the gauge automorphisms \( \alpha_t \), and inner automorphisms.
\end{enumerate}
\end{theo}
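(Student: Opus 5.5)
We will construct $d$ explicitly, check the three properties in turn, and use a $K$-theoretic or Cartan-type invariant to rule out membership in the subgroup generated by $\sigma$, the $\alpha_t$ and $\operatorname{Inn}(\C)$.

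\textbf{Setup.} Recall that $\Q$ is generated by the isometries $s_1,s_2$ and a unitary $u$ with $us_1=s_2$ and $us_2=us_1u$. Thus $s_2=us_1$ and $us_2u^*=s_2 s_1^*$-type relations hold, i.e.\ $u$ acts as the $2$-adic odometer $x\mapsto x+1$ on the spectrum of $\D$. For a unitary $d\in\D$, the map $\lambda_d$ is given by $\lambda_d(s_i)=ds_i$.

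\textbf{Step 1: choice of $d$ and property (2).} We want $\lambda_d$ to fix $C^*(\D,s_2)$. Since $\lambda_d$ fixes $\D$ pointwise (as $d\in\D$ commutes with $\D$ and $\lambda_d(s_\mu s_\mu^*)=s_\mu s_\mu^*$), it suffices that $ds_2=s_2$. This holds exactly when $d\,s_2s_2^*=s_2s_2^*$, i.e.\ $d$ is $1$ on the cylinder of words beginning with $2$. So we take $d=s_2s_2^*+s_1 d' s_1^*$ with $d'\in\D$ unitary; then $\lambda_d(s_1)=s_1d'$.

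\textbf{Step 2: property (1).} We use the monoid correspondence (Theorem~\ref{P:u-uni}): an extension exists iff there is a unitary $U\in\Q$ with $Us_1d'=s_2$ and $Us_2=Us_1d'U$. We try $U=u\,e$ with $e\in\D$ unitary, or more naturally $U=\theta(u)$ for a $\D$-valued cocycle twist of the odometer. Writing the intertwining relations as a functional equation on $\mathbb Z_2$, we need a continuous $\mathbb T$-valued $e$ with $e(x+1)\cdot d'$-type coboundary conditions. We choose $d'$ so that this cohomological equation is solvable, for instance $d'=e\,\mathrm{Ad}(u)(e^*)$ restricted suitably, while $d$ is not a coboundary in the Cuntz sense $d=v^*\varphi(v)$ with $v\in\D$ (where $\varphi$ is the canonical endomorphism). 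Surjectivity onto $\Q$, making the extension an automorphism, should follow since $\lambda_{d^*}$-type inverses are built in the same way.

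\textbf{Step 3: property (3), the main obstacle.} Suppose $\lambda_d=\mathrm{Ad}(w)\circ\alpha_t\circ\sigma^{\epsilon}$. Since $\lambda_d$ fixes $\D$, and $\alpha_t$, $\sigma$ both normalize $\D$, the automorphism $\mathrm{Ad}(w)$ normalizes $\D$ as well; by Cuntz's discreteness results, $w$ lies in the normalizer of $\D$. Comparing actions on the spectrum of $\D$, the flip $\sigma$ acts nontrivially while $\lambda_d$ acts trivially. Hence $\epsilon=0$ unless $\mathrm{Ad}(w)$ induces the flip, which is impossible because $w$ normalizes and is induced by a Higman--Thompson element while the flip is not. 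We then obtain $w\in\mathcal U(\D)$ up to a scalar, and reduce to the identity $d=z\,w\varphi(w^*)$ with $z\in\mathbb T$. Evaluating at the fixed points $1^\infty$ and $2^\infty$ of the shift gives $z=d(2^\infty)=1$ and $z=d(1^\infty)$. Failing these, we need an obstruction from the coboundary equation along orbits: the function $d$ must be chosen so that its product over a periodic orbit (e.g.\ of $(12)^\infty$) is $\neq 1$, or so that any solution $w$ would be discontinuous. Producing a continuous $d$ that is a $u$-coboundary but not a $\varphi$-coboundary is exactly the delicate part. My first attempt is to take $d'$ with a nontrivial product along some periodic shift orbit, which forbids a $\varphi$-coboundary, and then to verify the odometer-coboundary condition directly.
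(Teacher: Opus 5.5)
There is a genuine gap, and it is in Step~2, not Step~3. Property~(1) carries the whole content of the theorem, and your proposal never exhibits a $d$ satisfying it. ``Choose $d'$ so that this cohomological equation is solvable, for instance $d'=e\,\mathrm{Ad}(u)(e^*)$ restricted suitably'' is a hope, not a construction, and it is not even the right equation.

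In the paper's convention ($s_1=us_2$), write $d=s_2s_2^*+\tilde d\,s_1s_1^*$ and seek the extension as $U=eu$ with $e\in\mathcal U(\D)$. This form is forced, since the extension fixes $\D$ pointwise and $u$ normalizes the MASA $\D$. The criterion $d^*Ud=s_1s_2^*+s_2Us_1^*$ of Theorem~\ref{T:endZ} then reads $e=d\,(udu^*)^*\,(s_1s_1^*+s_2es_2^*)$. This is a $u$-coboundary twisted by a shift term in $e$ itself. Taking $e=\tilde d$, it is exactly the functional equation $\tilde d(k)=\tilde d(2k)\tilde d(2k-1)$ on $\mathbf Z_2$.

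The real problem is to show this equation has a continuous solution $\tilde d:\mathbf Z_2\to\mathbb T$ with $\tilde d\not\equiv 1$ (the equation then forces $d\neq 1$). This is not routine. The result of \cite{ACR18b} recalled in the introduction, combined with the argument in the next paragraph, shows that no locally constant $d$ of this shape other than $d=1$ is extendable, so an analytic construction is unavoidable. The paper provides it in Theorem~\ref{T:cont_fn}. The series $f(k)=\sum_{n\ge 0}P_n^{-1}\chi(2^{-(n+1)}k)$, with $P_n=\prod_{j=0}^{n-1}(1+e^{-\pi i/2^{j+1}})$, telescopes to $f(k)=f(2k)+f(2k-1)$ and converges uniformly by Vi\`ete's product. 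Moreover $\mathrm{Im}\,f(0)\in(1,2)$, so $\tilde d=e^{2\pi i\,\mathrm{Im} f}$ works. Nothing in your proposal replaces this step.

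You have also misplaced the difficulty: Step~3 needs nothing beyond $d\neq 1$. Once you reach $\epsilon=0$, $w\in\mathcal U(\D)$ and $z=1$, the identity $\operatorname{Ad}(w)(s_2)=s_2$ puts $w$ in $C^*(s_2)'\cap\C=\mathbb C1$; this is the Matsumoto--Tomiyama outerness the paper uses. More directly, $d=w\varphi(w^*)$ equals $1$ on the $2$-cylinder, so $w(2x)=w(x)$, and $w$ is constant by continuity at $2^\infty$. Either way $d=1$.

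So the periodic-orbit obstruction you propose (along $(12)^\infty$) is superfluous. It would only add a constraint to reconcile with the still-unsolved extension equation. The test at the other shift-fixed point can never succeed, because extendability forces $d=1$ there: evaluating the equation above at $0$ gives $d(-1)=1$.

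Your handling of $\epsilon=1$ through the induced homeomorphism of the spectrum of $\D$ is correct: the letterwise flip is not induced by any element of $V_2$. It is a fine alternative to the paper's route via $F=P^*\varphi(P)$, which would make $\sigma$ inner.

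Finally, your relations mix conventions. In the paper $s_1=us_2$, and for $d=s_2s_2^*+s_1d's_1^*$ the extension criterion is $Us_2=s_1d'$ and $Us_1d'=s_2U$ (Theorem~\ref{T:endZ}, not Theorem~\ref{P:u-uni}). The relation ``$Us_2=Us_1d'U$'' is not correct. Surjectivity of the extension is immediate once $U=eu$ with $e\in\D\subseteq\lambda_d(\C)$, or from Theorem~\ref{T:auteu}.
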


Using the extension of $\lambda_d$ constructed in Theorem~\ref{T:lambdad1}, we obtain the following property of an important subgroup of the outer automorphism group of $\Q$. 
\begin{theo}[See Theorem \ref{T:AutQOnonabe}]
The canonical image of $\operatorname{Aut}(\Q, \C)$ in $\operatorname{Out}(\Q)$ is non-abelian.
\end{theo}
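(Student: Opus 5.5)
The strategy is to exhibit two automorphisms in $\operatorname{Aut}(\Q,\C)$ whose commutator is not inner in $\Q$. The two candidates are the extension $\tilde\lambda_d$ of the automorphism $\lambda_d$ from Theorem~\ref{T:lambdad1}, and a gauge automorphism $\alpha_t$. I take as known from earlier work that the gauge action extends to $\Q$ by $\alpha_t(u)=u$ and $\alpha_t(s_i)=e^{2\pi i t}s_i$; if this fails, the flip-flop extension would be the fallback. Both automorphisms preserve $\C$ globally, so their classes lie in the canonical image of $\operatorname{Aut}(\Q,\C)$ in $\operatorname{Out}(\Q)$. It therefore suffices to show that $\gamma:=\tilde\lambda_d\alpha_t\tilde\lambda_d^{-1}\alpha_t^{-1}$ is not of the form $\operatorname{Ad}W$ with $W$ a unitary of $\Q$.

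\textbf{Step 1: compute $\gamma$ on $\C$.} Write $\lambda_d(s_i)=ds_i$ for $d\in\D$. Since $\alpha_t$ fixes $\D$ pointwise, $\alpha_t\lambda_d\alpha_t^{-1}=\lambda_d$ on $\C$. Hence $\gamma|_{\C}=\operatorname{id}$ for every $t$, so this choice makes the commutator inner for trivial reasons. The pair $(\lambda_d,\alpha_t)$ is therefore useless, and I will use the flip-flop $\sigma$ instead.

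\textbf{Step 2: redo Step 1 with $\sigma$.} Take $\tilde\sigma$ to be the extension of $\sigma$ to $\Q$ with $u\mapsto u^*$, which I believe is known. Then $\sigma\lambda_d\sigma^{-1}=\lambda_{\sigma(d)}$ on $\C$, so the commutator restricted to $\C$ is $\lambda_{d\sigma(d)^*}$, up to the ordering conventions. Suppose the commutator were $\operatorname{Ad}W$ for some unitary $W\in\Q$. Then $\operatorname{Ad}W$ maps $\C$ onto $\C$. I would invoke the rigidity of $\C\subseteq\Q$ (Bassi--Conti, or the absence of intermediate subalgebras proved in \cite{OY26}) to argue that $W$ can be taken in $\C$ up to a scalar. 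If $\operatorname{Ad}W(\C)=\C$ and $W\notin\C$, then $C^*(\C,W)$ is an intermediate algebra, hence equal to $\Q$. The deduction from this to $W\in\mathbb{T}\cdot\C$ needs an extra argument, for example comparing relative commutants via $\C'\cap\Q=\mathbb{C}$.

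With $W\in\C$, the equation would give $\lambda_{d\sigma(d)^*}=\operatorname{Ad}W$, which makes $\lambda_d\sigma\lambda_d^{-1}\sigma^{-1}$ inner in $\operatorname{Aut}(\C)$. Theorem~\ref{T:lambdad1}(3) says that $\lambda_d$ itself avoids the group generated by $\sigma$, $\alpha_t$ and inner automorphisms. What is needed here is stronger: that $\lambda_d$ does not commute with $\sigma$ modulo inner automorphisms.

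\textbf{Main obstacle.} The main obstacle is exactly this non-commutation modulo inners. I would attack it through the explicit $d$ from Theorem~\ref{T:lambdad1}. By item (2), $\lambda_d$ fixes $C^*(\D,s_2)$, while $\sigma\lambda_d\sigma$ fixes $C^*(\D,s_1)$. If $\lambda_{d\sigma(d)^*}=\operatorname{Ad}W$ with $W\in\C$, I would evaluate it on $s_2^k$ and $s_1^k$ to force $W$ into $\D'\cap\C=\D$. A unitary $W\in\D$ gives $\operatorname{Ad}W(s_i)=Ws_i\lambda_{s}(W^*)$, where $\lambda_s$ denotes the canonical shift endomorphism. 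This yields the cocycle equation $d\sigma(d)^*=W\lambda_s(W)^*$ in $\D$. The contradiction should come from showing that this coboundary equation is unsolvable, by the same non-localized, non-coboundary property of $d$ that produced item (3). Concretely, I would evaluate both sides at suitable points of the Cantor set $\{1,2\}^{\mathbb{N}}$, for instance along orbits of the shift converging to $2^\infty$, and compare the resulting infinite products.
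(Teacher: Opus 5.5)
\textbf{There is a genuine gap: you reduce the problem correctly but do not prove the decisive step.} You end up with the same pair as the paper, $\tilde\lambda_d$ and $\sigma$. Your reduction is also right. If $\sigma\tilde\lambda_d\sigma^{-1}\tilde\lambda_d^{-1}=\operatorname{Ad}W$, then on $\C$ this reads $W\varphi(W)^*=d^*\sigma(d)$. Your $d\sigma(d)^*$ comes from the inverse commutator, which changes nothing.

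\textbf{First, smaller gap: locating $W$.} Your route to $W\in\mathbb{T}\cdot\C$ via maximality and $\C'\cap\Q=\mathbb{C}$ is not an argument. Tightness only gives $C^*(\C,W)=\Q$. Evaluating on $s_2^k,s_1^k$ does not by itself put $W$ in $\D'$ either. The step is unnecessary. The commutator fixes $\D$ pointwise, because $\lambda_d$ does and $\sigma(\D)=\D$. Since $\D$ is maximal abelian in $\Q$, you get $W\in\D'\cap\Q=\D$ at once. The paper gets more: it evaluates $\sigma\tilde\lambda_d=\operatorname{Ad}(w)\tilde\lambda_d\sigma$ at $s_2$ and uses $ds_1=(\tilde d u)^*s_2(\tilde d u)$ together with $C^*(s_2)'\cap\Q=\mathbb{C}$. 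This yields the explicit form $w=\mu u^*\tilde d u$.

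\textbf{Second, serious gap: the non-coboundary claim.} You leave it as an expectation, attributed to the same property that produced item (3) of Theorem~\ref{T:lambdad}. That is not where it comes from. Item (3) was proved from the outerness of $\lambda_d$ (Matsumoto--Tomiyama) and of $\sigma$. Neither item (3) nor $\phi(\z)\not\subseteq\mathbb{Z}$ (checked only through $\phi(0)\notin\mathbb{Z}$) implies that $[\lambda_d]$ and $[\sigma]$ fail to commute modulo inner automorphisms.

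The input you need is a new numerical fact: $\tilde d(0)\tilde d(1)\neq 1$, i.e.\ $\phi(0)+\phi(1)\notin\mathbb{Z}$. The paper verifies this via $2<\Im(f(0)+f(1))<3$. With it, your plan does go through, as follows.
\begin{enumerate}
\item In the canonical representation, the equation becomes $W(2m)=\tilde d(-2m-1)W(m)$ and $W(2m+1)=\overline{\tilde d(2m+1)}\,W(m)$.
\item Iterate the first relation along $2^n z\to 0$ in $\z$. Telescoping with $\tilde d(k)=\tilde d(2k)\tilde d(2k-1)$ gives $W(z)=W(0)\tilde d(0)\overline{\tilde d(-z)}$ for all $z$.
\item Substitute this into the second relation. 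It forces $\tilde d(2m+1)\tilde d(-2m)=1$ for all $m$.
\item At $m=0$ this says $\tilde d(0)\tilde d(1)=1$, a contradiction.
\end{enumerate}

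The paper reaches the same identity differently. It uses $\sigma(d^*\sigma(d))=(d^*\sigma(d))^*$ to get $\varphi(w\sigma(w))=w\sigma(w)$, hence $w\sigma(w)\in\mathbb{T}1$. It then inserts $w=\mu u^*\tilde d u$.
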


Another important, but previously overlooked, Cartan subalgebra of $\Q$ is $C^*(u)$ (see Proposition~\ref{prop:cartan2}). Building on Theorem~\ref{P:u-uni_intro} and results from \cite{Komura25}, we completely characterize the automorphisms of $\Q$ that globally preserve $C^*(u)$ and obtain a description of the associated Weyl group $\mathcal{W}(\Q, C^*(u))$. As applications, we provide a complete description of the unitary normalizer of $C^*(u)$ and the topological full group of the Deaconu--Renault groupoid $\Gamma(\mathbb T, \gamma)$ associated with the Cartan pair $(\Q, C^*(u))$. In particular, we prove the following theorem.

\begin{theo}[See Theorem \ref{T:autp}, Corollaries \ref{C:normalizer}, \ref{C:Weyl2} and \ref{C:TFG}]
\label{T:Weyl}
Let $\mathbb{Z}(2^\infty)$ denote the Pr\"ufer 2-group. Then
\[
\operatorname{Aut}(\Q, C^*(u))
=
\left\{
\sigma^{n}\circ \operatorname{Ad}(U_{\xi})\circ \beta_{V}
:\,
\xi\in\mathbb{Z}(2^\infty),\ V\in\mathcal{U}(C^*(u)),\ n=0,1
\right\}
\]
and
\[
\operatorname{Inn}(\Q, C^*(u))
=
\left\{
\operatorname{Ad}(U_{\xi}V)
:\,
\xi\in\mathbb{Z}(2^\infty),\ V\in\mathcal{U}(C^*(u))
\right\}.
\]
Consequently, we obtain
\begin{enumerate}[label=(\roman*), leftmargin=*]
\item the Weyl group
\[
\mathcal{W}(\Q, C^*(u))
\cong
\mathbb{Z}(2^\infty)\rtimes \mathbb{Z}/2\mathbb{Z};
\]
\item the unitary normalizer group
\[
\mathcal{N}_{C^*(u)}(\Q)
=
\left\langle
\mathcal{U}(C^*(u)),\,
U_{\xi}:\xi\in\mathbb{Z}(2^\infty)
\right\rangle;
\]
\item the topological full group
\[
[[\Gamma(\mathbb{T},\gamma)]]
\cong
\mathbb{Z}(2^\infty).
\]
\end{enumerate}
\end{theo}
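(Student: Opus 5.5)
The plan is to reduce everything to the structure of the homeomorphism of $\mathbb{T}$ that an automorphism in $\operatorname{Aut}(\Q, C^*(u))$ induces on the spectrum of $C^*(u)\cong C(\mathbb{T})$. Once that homeomorphism is pinned down, Theorem~\ref{P:u-uni} handles everything that fixes $u$.

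\textbf{Step 1: the induced homeomorphism.} Let $\alpha\in\operatorname{Aut}(\Q, C^*(u))$. Then $\alpha|_{C^*(u)}$ is given by a homeomorphism $h$ of $\mathbb{T}$. By Proposition~\ref{prop:cartan2}, $(\Q, C^*(u))$ is a Cartan pair with Weyl groupoid $\Gamma(\mathbb{T},\gamma)$, where $\gamma(z)=z^2$. Renault's theory, in the form used in \cite{Komura25}, then says that $\alpha$ induces an automorphism of $\Gamma(\mathbb{T},\gamma)$ covering $h$. Consequently $h$ preserves the tail-equivalence relation of $\gamma$, and it also preserves the canonical cocycle $(x,k-l,y)\mapsto k-l$ up to sign.

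From this I want to conclude that $h(z)=\xi z$ or $h(z)=\xi\bar z$ with $\xi\in\mathbb{Z}(2^\infty)$. The argument I would try is as follows.
\begin{itemize}
\item Preservation of the cocycle forces $h\circ\gamma$ and $\gamma\circ h$ to agree up to a locally constant correction on clopen pieces. Since $\mathbb{T}$ is connected, the correction is global, so $h$ conjugates $\gamma$ to $\gamma$ up to the orbit relation.
\item The homeomorphisms of $\mathbb{T}$ commuting with $z\mapsto z^2$ are exactly $z\mapsto z^{\pm1}$.
\item The homeomorphisms preserving the orbit relation and the cocycle are then obtained by composing these with rotations by $2^n$-th roots of unity. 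These rotations are precisely the maps that shift between points sharing a common $\gamma^n$-image.
\end{itemize}
This classification is the main obstacle. I would lean on the results of \cite{Komura25} describing groupoid isomorphisms of Deaconu--Renault groupoids, rather than redo the dynamics by hand.

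\textbf{Step 2: reduction to $\alpha(u)=u$.} By Step 1, after composing with $\sigma$ (which sends $u\mapsto u^*$) if $h$ reverses orientation, and then with $\operatorname{Ad}(U_\xi)^{-1}$ (where $U_\xi$ is the normalizing unitary implementing $u\mapsto\xi u$), we may assume $\alpha(u)=u$. Theorem~\ref{P:u-uni} then gives $\alpha=\lambda_{(u,Xs_2)}$ for a unitary $X\in C^*(u)'\cap\Q$. Since $C^*(u)$ is maximal abelian, $X\in\mathcal{U}(C^*(u))$, so $\alpha=\beta_X$. This yields the stated description of $\operatorname{Aut}(\Q,C^*(u))$.

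\textbf{Step 3: the inner automorphisms.} Now suppose $\operatorname{Ad}(W)$ preserves $C^*(u)$. Then $W$ normalizes $C^*(u)$. By Step 2, $\operatorname{Ad}(W)=\sigma^n\operatorname{Ad}(U_\xi)\beta_V$. It remains to prove two things.
\begin{itemize}
\item $n=0$: the flip reverses the cocycle, whereas inner normalizers act through bisections, which preserve it.
\item An inner $\beta_V$ equals $\operatorname{Ad}(V')$ for some $V'\in\mathcal{U}(C^*(u))$. Indeed, $\operatorname{Ad}(W')$ fixes $C^*(u)$ pointwise, so $W'\in C^*(u)'=C^*(u)$.
\end{itemize}

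\textbf{Step 4: consequences.} The three corollaries then follow formally.
\begin{enumerate}[label=(\roman*), leftmargin=*]
\item The Weyl group is $\operatorname{Aut}(\Q,C^*(u))$ modulo the automorphisms fixing $C^*(u)$ pointwise, namely the $\beta_V$. The quotient is generated by the $\operatorname{Ad}(U_\xi)$ and $\sigma$. Since $\sigma$ conjugates rotation by $\xi$ to rotation by $\bar\xi$, the quotient is $\mathbb{Z}(2^\infty)\rtimes\mathbb{Z}/2\mathbb{Z}$.
\item For the normalizer: any normalizing unitary $W$ satisfies $\operatorname{Ad}(W)=\operatorname{Ad}(U_\xi V)$ by Step 3. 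Hence $W\in\mathbb{T}\,U_\xi V$, which lies in the stated group.
\item The topological full group is $\mathcal{N}_{C^*(u)}(\Q)/\mathcal{U}(C^*(u))$ by Renault's correspondence, and by (ii) this quotient is $\mathbb{Z}(2^\infty)$.
\end{enumerate}
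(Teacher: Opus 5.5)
\textbf{The genuine gap is in Step 3, where you exclude $n=1$.} Your claim that ``the flip reverses the cocycle'' is false.

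\begin{itemize}
\item The gauge action $\alpha_t$ is the dual action of the canonical cocycle $c$ on $\Gamma(\mathbb{T},\gamma)$, and $\sigma$ commutes with it: $\sigma\alpha_t(s_i)=t\,\sigma(s_i)=\alpha_t\sigma(s_i)$ and $\sigma\alpha_t(u)=u^*=\alpha_t\sigma(u)$. So the groupoid automorphism induced by $\sigma$ preserves $c$.
\item Concretely, that automorphism is $(x,n,y)\mapsto(\bar x,n,\bar y)$. It is well defined precisely because $z\mapsto\bar z$ commutes with $z\mapsto z^2$, and it visibly fixes the cocycle.
\item In fact $\operatorname{Ad}(U_\xi)$ (with $U_\xi\in\D$) and $\beta_V$ also commute with the gauge action. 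So by the very classification you are proving, every element of $\operatorname{Aut}(\Q,C^*(u))$ preserves $c$. No cocycle argument can separate $n=0$ from $n=1$.
\end{itemize}

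What separates the two cases is orientation. The paper detects it in $K$-theory: $[u]$ generates $K_1(\Q)\cong\mathbb{Z}$, and any automorphism with $u\mapsto\xi u^*$ acts by $-1$ on $K_1$, so it cannot be inner (Theorem \ref{T:outer_aut}). If you want to stay in the groupoid picture, you can argue as follows.
\begin{itemize}
\item A unitary normalizer has open support a full bisection $S\cong\mathbb{T}$. Since $S$ is connected, $c$ is constant on $S$, say equal to $c_0$.
\item Locally $\gamma^m\circ h_S=\gamma^n$ with $m-n=c_0$. Hence any lift of $h_S$ to $\mathbb{R}$ is affine of slope $2^{-c_0}$.
\item Since $\deg h_S=\pm1$, this forces $c_0=0$ and $\deg h_S=+1$. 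So $z\mapsto\xi\bar z$ never comes from an inner automorphism.
\end{itemize}

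\textbf{Step 1 also stops short at its key point.} Your bullets do not yield the classification. The map $h(z)=\xi z$ with $\xi\neq 1$ does not commute with $\gamma$, so the classification of homeomorphisms commuting with $z^2$ does not apply to $h$ directly. Your third bullet simply restates the conclusion.

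The paper takes from \cite{Komura25} the relation $\gamma^{l}\circ h=\gamma^{k}\circ h\circ\gamma$ with $l,k$ constant. It then lifts $h$ to $\mathbb{R}$ and solves $2^l\phi(t)-2^k\phi(2t)=c$, obtaining $l=k+1$, $h(z)=\xi z^{\pm1}$ and $\xi^{2^k}=1$ (Proposition \ref{prop:orbit_equiv}). Your idea can be completed quickly from that relation:
\begin{itemize}
\item Comparing degrees gives $l=k+1$.
\item Then $g:=\gamma^k\circ h$ genuinely commutes with $\gamma$, so $g(z)=z^{\pm 2^k}$.
\item Hence $h(z)z^{\mp1}$ is a continuous $2^k$-th root of unity, and therefore constant.
\end{itemize}

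With these two repairs, the rest of your outline matches the paper: Step 2, the commutant argument for an inner $\beta_V$, and Step 4. In Step 4 you use $\mathcal{N}_{C^*(u)}(\Q)/\mathcal{U}(C^*(u))$ in place of the paper's $\operatorname{Inn}(\Q,C^*(u))/\operatorname{Inn}_{C^*(u)}(\Q)$, which is equally fine. One small correction: Theorem \ref{P:u-uni} gives $\alpha=\lambda_{(u,X)}$, not $\lambda_{(u,Xs_2)}$.
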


\noindent \textbf{Structure of the paper.}
After recalling the necessary notation and background material in Section~\ref{S:Pre}, we investigate the structure of $\operatorname{End}(\Q)$ in Section~\ref{S:fundamental}. Section~\ref{S:unique} is devoted to necessary or sufficient conditions for the uniqueness of extensions of endomorphisms of $\C$ to $\Q$. In Section~\ref{S:diagaut}, we construct an explicit diagonal automorphism that is not a composition of the previously known concrete extendable automorphisms of $\C$. This construction allows us to identify a non-abelian subgroup of the outer automorphism group of $\Q$.  Finally, in Section~\ref{S:Weyl}, we completely characterize the automorphisms of $\Q$ that globally preserve $C^*(u)$ and determine the structure of the associated Weyl group. As a by-product, we obtain a description of the unitary normalizer group of $C^*(u)$ in $\Q$ and the topological full group of the associated Deaconu--Renault groupoid.

\bigskip
\noindent
\textbf{Acknowledgements.} 
This work is partially supported by an NSERC Discovery Grant.

\section{Preliminaries}
\label{S:Pre}
This section briefly recalls some necessary background on the 2-adic ring C*-algebra $\Q$ and 2-adic integers $\z$. Refer to \cite{ACR21, Fol95, LL12, OY26, Rob00, Tailbleson67} for more information.

\begin{defn}\label{defn:2-adic}
The \emph{2-adic ring $C^*$-algebra} $\mathcal{Q}_2$ is the universal unital $C^*$-algebra generated by a unitary $u$ and an isometry $s_2$ satisfying the following relations:
$$
\text{(I)} \quad s_2 u = u^2 s_2, \qquad
\text{(II)} \quad s_2 s_2^* + u s_2 s_2^* u^{*} = 1.
$$
\end{defn}
%%%%%%%%%%%

By the universal property of $\Q$, for each $t\in \mathbb{T}$ there is $\alpha_t\in \mathrm{Aut}(\Q)$ defined by 
$
\alpha_t(u) = u\text{ and }\alpha_t(s_2) = t s_2. 
$
This yields an action of $\mathbb{T}$ on $\Q$, which is known as the \textit{gauge action} of $\Q$. 

It is shown that $\Q$ is a Kirchberg algebra satisfying the UCT with \(K_0 (\Q) = K_1 (\Q) = \mathbb{Z}\) (\cite{Katsura08, LL12}).

%%%%%%%%%%%
The $C^*$-algebra $\Q$ has several realizations in the literature, some of which are implicit or presented in different forms. For examples, see \cite{BRRW14, CaHR16, Katsura08, LL12, LY19, Spi12}. For our purposes, here we recall only three of these realizations.  

\begin{enumerate}
\item
$\Q$ is the C*-algebra of a self-similar graph $(\bZ, \textsf{E}_2)$ (see \cite{LY21} for example): $\Q\cong \O_{\bZ, \textsf{E}_2}$, where

\[
\small
\mathsf{E}_2= 
\xymatrix{
\bullet\ar@[black]@(ur,dr)^{\textsf{e}_{2}}\ar@[black]@(dl,ul)^{\textsf{e}_1}
}
\]
and 
\[
1\cdot \textsf{e}_1=\textsf{e}_2,\ 1\cdot 
\textsf{e}_2=\textsf{e}_1,\ 1|_{\textsf{e}_1}=1, \ 1|_{\textsf{e}_2}=0. 
\]
This realization facilitates some proofs. 

\item $\Q$ is a semigroup crossed product (\cite{LL12}): $\Q\cong C(\mathbf{Z}_2)\rtimes (\bZ\rtimes [2\rangle)$,
where $\mathbf{Z}_2$ is the 2-adic integers. To the best of the authors' knowledge, this explains why $\Q$ is known as the ``2-adic ring $C^*$-algebra.''

\item $\Q$ has a canonical representation (\cite{LL12}): On 
$\ell^2(\bZ)=\overline{\operatorname{span}}\{e_n: n\in\mathbb{Z}\}$,
\[
u(e_n):=e_{n+1},\ s_1(e_n):=e_{2n+1},\ s_2(e_n):=e_{2n} \quad \text{for all }n\in \bZ.
\]
The canonical representation was first introduced in \cite{LL12} and has since been used extensively in the study of $\Q$. See the survey \cite{ACR21} and the references therein for further details.
\end{enumerate}

%%%%%%%%%%%

The $C^*$-algebra $\Q$ contains several notable $C^*$-subalgebras that will be used frequently throughout the paper. These include the canonical copy $\C = C^*(s_1,s_2)$ of the Cuntz algebra, where $s_1 := us_2$; the diagonal subalgebra $\D$, which is also a Cartan subalgebra of $\Q$; the fixed-point algebra $\mathcal{B}_2$ of the gauge action $\alpha$; the fixed-point algebra $\F$ of $\alpha|_{\C}$; and $C^*(u)$, which is a MASA in $\Q$ \cite{ACR18}.

It is known that $\mathcal{B}_2$ and $\mathcal{F}_2$ are a Bunce-Deddens algebra and UHF algebra of type $2^\infty$, respectively  (\cite{BOS18, VY25}). So both of them have a unique faithful tracial state. The spectrum of \(\mathcal{D}_2\) is homeomorphic to \(\z\) (\cite{LL12}).
Very recently, in \cite{OY26} we show that $\C$ is a maximal subalgebra of $\Q$ -- there are no non-trivial intermediate $C^*$-subalgebras between $\C$ and $\Q$. In other words, the inclusion $\C \subseteq \Q$ is tight in the terminology of \cite{BC2025}. As a consequence, we resolve several open questions posed in \cite{ACR21}.

By an endomorphism of a unital $C^*$-algebra $A$, we always mean a unital $^*$-endomorphism of $A$. We denote the set of all such endomorphisms by $\operatorname{End}(A)$. Let $B\subseteq A$ be a unital $C^*$-subalgebra. We denote the automorphism group of $A$ by $\operatorname{Aut}(A)$, the subgroup of automorphisms that globally preserve $B$ by
\[
\operatorname{Aut}(A, B)=\{\phi\in\operatorname{Aut}(A):\phi(B)=B\},
\]
and the subgroup of automorphisms that fix $B$ pointwise by
\[
\operatorname{Aut}_B(A)=\{\phi\in\operatorname{Aut}(A):\phi(x)=x \text{ for all } x\in B\}.\]

Two distinguished endomorphisms of $\Q$ are of particular importance, as they will play a central role later. The first is the flip-flop automorphism $\sigma$ defined by
\[
\sigma(s_1)=s_2,\qquad \sigma(s_2)=s_1,\qquad \sigma(u)=u^*,
\]
which is the unique extension of the well-known flip-flop automorphism of $\C$. In \cite{OY26} we provide two concrete descriptions of the fixed-point algebra of $\sigma$. Analogous to the phenomenon for $\C$ studied in \cite{CL12}, both the fixed-point algebra and the crossed product induced by $\sigma$ turn out to be isomorphic to $\mathcal{Q}_2$. Moreover, the two corresponding inclusions are tight as well. See \cite{OY26} for further details.
 
The second one is a proper endomorphism on $\Q$, which is the unique extension of the canonical shift $\varphi$ on $\C$, still denoted as $\varphi$: 
\[
\varphi(x)=s_1 x s_1^*+s_2x s_2^*\quad \text{for all }x\in \Q.
\]
In particular, $\varphi(u)=u^2$. This endomorphism and its variants play a vital role in \cite{OY26}.

Let us end this section by reviewing some basics on 
the 2-adic integers $\z$. 
Every element $x$ in the 2-adic field $\mathbb{Q}_2$ has a unique (aka canonical) expansion 
\[
x=\sum_{n=N}^\infty a_n 2^n,
\]
where $N\in \bZ$, each $a_n\in\{0,1\}$,  and $a_N\ne 0$. 
The fractional part of $x$, denoted by $\{x\}$, is 
\[
\{x\}:=\sum_{n=N}^{-1} a_n 2^n. 
\]
So $\z$ is the subgroup of the `integers' of $\mathbb{Q}_2$. Thus every element $k\in \z$ has a unique (canonical) expansion 
\[
k=\sum_{n=0}^\infty a_n 2^n,
\]
where $a_n\in\{0,1\}$. 

Let $\mathbb{Z}(2^\infty)$ be the Pr\"ufer 2-group: 
\[
\mathbb{Z}(2^\infty)=\{z\in \mathbb{T}\mid z^{2^n}=1\text{ for some }0\le n\in \mathbb{Z}\},
\]
the group of all $2^n$-th power roots of unity in $\mathbb{C}$. Note that the Pontryagin dual group $\widehat{\mathbb{Z}}_2$ of $\z$ is isomorphic to $\mathbb{Z}(2^\infty)$ \cite{Som20}.

The Pontryagin dual group $\widehat{\mathbb{Q}}_2$ is isomorphic to itself \cite{Fol95}.
In fact, let $\chi$ be the non-trivial character of $\mathbb{Q}_2$ defined by
\[
\chi(x)=e^{2\pi i\{x\}}\quad\text{for all }x\in \mathbb{Q}_2.
\]
 Then every character of $\mathbb{Q}_2$ is of the form $\chi_u(x)=\chi(ux)$ for some $u\in \mathbb{Q}_2$. 

\section{Some fundamentals on $\operatorname{End}(\Q)$}
\label{S:fundamental}

In this section, we study some fundamental properties of the endomorphisms of $\Q$. Probably the most important one is the one-to-one correspondence between the set of all endomorphisms of $\Q$ and a certain monoid. This correspondence plays a key role throughout the paper. As one of its applications, we classify two classes of endomorphisms of $\Q$, which generalize \cite[Theorems 6.14 and 6.18]{ACR18}.
Further applications of Theorem \ref{T:endZ} will be given in Section \ref{S:unique}.
 
\subsection{A basic structure of $\operatorname{End}(\Q)$} 

First, we recall the well-known Cuntz-Takesaki correspondence on endomorphisms of $\C$ (see \cite{Cun80}):
There is a one-to-one correspondence 
%\begin{align*}
$
\U(\O_2)\to  \operatorname{End}(\O_2),\ V\mapsto \lambda_V, 
%\end{align*}
$
where 
\[
\lambda_V(s_i)= Vs_i\ (i=1,2).
\]
Its inverse is given by 
\[
\lambda\in \operatorname{End}(\O_2) \mapsto 
V:=\lambda(s_1)s_1^*+\lambda(s_2)s_2^*\in \U(\O_2).
\]

Motivated by the above and by the structure of endomorphisms of rank-2 graph $C^*$-algebras \cite{Yan10} (cf.~\cite{VY25}), we now focus on the endomorphisms of $\Q$. Before stating our result, let us set up some notation. For a unital $C^*$-algebra $A$, we denote the set of its unitary elements by $\mathcal{U}(A)$.
Given a unitary $U\in \U(\Q)$, let us define
\[
W_U:=s_1s_2^*+s_2Us_1^*.
\]
Then $W_U$ is a unitary in $\Q$.

For $A,B\in\Q$, we denote by $\T(A,B)\subseteq\Q$ the set of elements of $\Q$ intertwining $A$ and $B$, namely
\[
\T(A,B)=\{X\in\Q: AX=XB\}.
\]
The following result will be used frequently throughout the paper. Due to its importance, we state it as a theorem, although its proof is straightforward.

\begin{theo}
\label{T:endZ}
Let 
\[
\mathcal{S}_{\Q}:=\left\{(U,V)\in \U(\Q)\times \U(\Q): V\in \T(U, W_U)\right\}.
\]
\begin{itemize}
\item[(i)] 
There is a one-to-one correspondence 
\begin{align*}
\Xi: \mathcal{S}_{\Q}&\to \operatorname{End}(\Q),\ (U,V)\mapsto \lambda_{(U,V)},
\end{align*}
where $\lambda_{(U,V)}: u\mapsto U,\ s_i\mapsto Vs_i \ (i=1,2)$.  

\item[(ii)] 
$\mathcal{S}_{\Q}$ has a monoid structure with identity $(u, I)$, which makes $\Xi$ in (i) a monoid isomorphism. 

\item[(iii)] 
$\lambda_{(U,V)}\in \operatorname{End}(\Q)$ is an automorphism if and only if both $u$ and $V$ belong to $\lambda_{(U,V)}(\mathcal{Q}_2)$.
\end{itemize}
\end{theo}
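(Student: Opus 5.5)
The plan is to reformulate the defining relations of $\Q$ in terms of Cuntz isometries, so that the description of endomorphisms reduces to the Cuntz--Takesaki correspondence plus one extra intertwining condition.

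\textbf{Reformulating the presentation.} Using $s_1=us_2$ and relation (II), the pair $s_1,s_2$ satisfies the Cuntz relations. Relation (I) reads $us_1=u^2s_2=s_2u$. I claim that $\Q$ is the universal unital $C^*$-algebra generated by a unitary $u$ and Cuntz isometries $s_1,s_2$ subject to
\[
us_2=s_1,\qquad us_1=s_2u.
\]
To see this, note that given such data, $s_2^*$-relations give $s_2s_2^*+us_2s_2^*u^*=s_2s_2^*+s_1s_1^*=1$, which is (II), and $u^2s_2=us_1=s_2u$, which is (I). Conversely, (I) and (II) imply these relations, as observed above.

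Next I rewrite the two relations as one equation. Multiplying $u=u(s_2s_2^*+s_1s_1^*)$ out gives
\[
u=s_1s_2^*+s_2us_1^*=W_u .
\]
More generally, for unitaries $U,V$ and $S_i:=Vs_i$, the two conditions
\[
US_2=S_1,\qquad US_1=S_2U
\]
are together equivalent to $UV=VW_U$, that is, $V\in\T(U,W_U)$. For one direction, if $UV=VW_U$, then applying both sides to $s_2$ and to $s_1$ gives $UVs_2=Vs_1$ and $UVs_1=Vs_2U$. For the other direction, if those two equations hold, then
\[
UV=UV(s_2s_2^*+s_1s_1^*)=Vs_1s_2^*+Vs_2Us_1^*=VW_U .
\]

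\textbf{Part (i).} Let $(U,V)\in\mathcal S_{\Q}$. The elements $Vs_1,Vs_2$ are Cuntz isometries, and by the equivalence above they satisfy the required relations together with $U$. The universal property of $\Q$ then yields $\lambda_{(U,V)}$. This map is unital because $U$ is a unitary.

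Injectivity of $\Xi$ is immediate, since $V=\lambda(s_1)s_1^*+\lambda(s_2)s_2^*$ is recovered from $\lambda=\lambda_{(U,V)}$, and $U=\lambda(u)$.

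For surjectivity, take $\lambda\in\operatorname{End}(\Q)$ and set $U:=\lambda(u)$ and $V:=\lambda(s_1)s_1^*+\lambda(s_2)s_2^*$. Then $V$ is a unitary by Cuntz--Takesaki, because $\lambda(s_i)$ are Cuntz isometries, and $\lambda(s_i)=Vs_i$. Applying $\lambda$ to $us_2=s_1$ and $us_1=s_2u$ gives the two relations for $U$ and $Vs_i$, hence $V\in\T(U,W_U)$.

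\textbf{Part (ii).} I transport the composition law through $\Xi$. The key computation is
\[
\lambda_{(U,V)}\circ\lambda_{(U',V')}(s_i)=\lambda_{(U,V)}(V')\,Vs_i ,
\]
while $u$ is sent to $\lambda_{(U,V)}(U')$. So I define
\[
(U,V)\cdot(U',V'):=\bigl(\lambda_{(U,V)}(U'),\ \lambda_{(U,V)}(V')V\bigr).
\]
This pair lies in $\mathcal S_{\Q}$ because it corresponds to an endomorphism, by part (i). Associativity is inherited from composition of maps. The identity element is $(u,I)$, which corresponds to $\id$ since $W_u=u$.

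\textbf{Part (iii).} If $\lambda_{(U,V)}$ is an automorphism, then $u$ and $V$ trivially lie in its image. Conversely, suppose $u,V\in\lambda_{(U,V)}(\Q)$. Then $s_2=V^*\lambda_{(U,V)}(s_2)$ also lies in the image, so the image contains the generators $u,s_2$ and $\lambda_{(U,V)}$ is surjective. Injectivity is automatic because $\Q$ is simple (it is a Kirchberg algebra) and $\lambda_{(U,V)}$ is unital, hence nonzero.

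I expect the only genuinely non-formal step to be the reformulation of relations (I) and (II) as the single intertwining condition $UV=VW_U$. Everything else is bookkeeping modeled on Cuntz--Takesaki.
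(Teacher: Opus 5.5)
Your proof is correct. Parts (i) and (ii) match the paper. The paper also derives $US_1=S_2U$ and $US_2=S_1$ from $UV=VW_U$ and invokes the universal property; your rewriting of (I)--(II) as Cuntz relations plus $us_2=s_1$, $us_1=s_2u$ makes explicit why those two relations suffice. It recovers $V$ as $\lambda(s_1)s_1^*+\lambda(s_2)s_2^*$, and it defines the product $(U_2,V_2)*(U_1,V_1)=(\lambda_2(U_1),\lambda_2(V_1)V_2)$ by transporting composition, exactly as you do.

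Part (iii) is where the routes differ.
\begin{itemize}
\item The paper reads (iii) off the monoid. The equation $(U_2,V_2)*(U_1,V_1)=(u,I)$ unwinds to $\lambda_2(U_1)=u$ and $\lambda_2(V_1)=V_2^*$, so invertibility of $\lambda_2$ becomes the existence of a right inverse in $\mathcal{S}_{\Q}$.
\item You argue directly. Since $s_2=V^*\lambda(s_2)$ lies in the (closed) image, the image contains the generators. Injectivity then comes from simplicity of $\Q$.
\end{itemize}

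The paper's formulation identifies $\operatorname{Aut}(\Q)$ with the units of $\mathcal{S}_{\Q}$ and gives the inverse as an explicit pair. However, turning arbitrary preimages of $u$ and $V_2^*$ into an element of $\mathcal{S}_{\Q}$ (unitaries satisfying the intertwining relation) requires $\ker\lambda_2=0$. Injectivity is also needed for $\lambda_2$ to be an automorphism at all, and the paper leaves both points implicit. Your argument is shorter, does not depend on (ii), and states plainly where simplicity is used.
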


\begin{proof}
(i) Let $(U,V)\in\mathcal{S}_{\Q}$. Then define $\lambda_{(U,V)}:\Q\to \Q$ via 
\[
\lambda_{(U,V)}(u):=U, \quad \lambda_{(U,V)}(s_i):=V s_i =: S_i\ (i=1,2). 
\]
One can verify that 
\begin{align*}
U S_1=UV s_1=VW_U s_1=V s_2U=S_2 U,\\
U S_2=UV s_2=VW_U s_2=Vs_1=S_1.
\end{align*}
Therefore, by the universal property of $\Q$, $\lambda_{(U,V)}$ is an endomorphism of $\Q$. 

Conversely, let $\lambda$ be an endomorphism on $\Q$. Let 
\[
U=\lambda(u), \  
\ V= \lambda(s_1)s_1^* + \lambda(s_2)s_2^*.
\]
Then one can easily check that
$
 V\in \T(U, W_U),
 $ 
 and that $\lambda$ agrees with $\lambda_{(U,V)}$ on the generators $u,s_1,s_2$. Hence $\lambda=\lambda_{(U,V)}$.
 
Clearly, the assignments
\[
(U,V)\mapsto \lambda_{(U,V)}
\]
and
\[
\lambda\mapsto
\bigl(\lambda(u),\lambda(s_1)s_1^*+\lambda(s_2)s_2^*\bigr)
\]
are inverse to each other.

%%%%%%%%%%
(ii) For $(U_i, V_i)\in \mathcal{S}_{\Q}$ $(i=1,2)$, let $\lambda_i:=\lambda_{(U_i,V_i)}\in \operatorname{End}(\Q)$. Define 
\[
(U_2, V_2)*(U_1, V_1):=(\lambda_2(U_1), \lambda_2(V_1)V_2).
\]
Then simple calculations show that $(\mathcal{S}_{\Q}, *)$ is a monoid with identity $(u,I)$.

(iii) By the proof of (ii) above, one has 
\begin{align*}
(U_2, V_2)*(U_1, V_1)=(u, I)
&\iff (\lambda_2(U_1), \lambda_2(V_1)V_2)=(u, I)\\
&\iff
\lambda_2(U_1)=u \text{ and } \lambda_2(V_1)=V_2^*. 
\end{align*}
Hence $\lambda_2$ is an automorphism if and only if both $u$ and $V_2^*$ (equivalently $V_2$) belong to $\lambda_2(\mathcal{Q}_2)$.
\end{proof}

Some remarks are in order. 
%%%%%%%%%%

\begin{rmk}
Writing out $V\in \T(U, W_U)$ yields the following identity satisfied by $U$ and $V$: 
\begin{align}
\label{E:endo_criteria}
U=S_1S_2^*+S_2US_1^*
\end{align}
where $S_i:=Vs_i$ $(i=1,2$).     
\end{rmk}
%%%%%%%%
\begin{rmk}
It is worth mentioning that Theorem \ref{T:endZ} can be 
generalized to a large class of self-similar graph C*-algebras.  
\end{rmk}
%%%%%%%%%%
\begin{ex}
The flip-flop automorphism $\sigma$ and the canonical shift endomorphism $\varphi$ can be identified with
\begin{equation}
\sigma \coloneqq\lambda_{(u^*,\, F)}\qquad \text{and}\qquad \varphi \coloneqq \lambda_{(u^2,\, V)}
\end{equation}
where $F=s_1s_2^*+s_2s_1^*$ and  
$V=\sum_{i,j=1}^2 s_{ij}s_{ji}^*$.
\end{ex}
%%%%%%%%%%%%%%%%%

\subsection{Two classifications}
In this subsection, we apply Theorem \ref{T:endZ} to classify two classes of endomorphisms. The first class consists of endomorphisms that share the same image of the generator $u$. As an immediate consequence, we significantly simplify some results in \cite{ACR18}. The second classification provides an obstruction to the uniqueness of extensions. These classification results will be used in later sections. 
%Further applications of Theorem \ref{T:endZ} will be given in Subsection %\ref{SS:app}.

Below is the first classification result.

\begin{theo}
\label{P:u-uni}
Let $\lambda_{(U, V)}$ be the endomorphism of $\mathcal{Q}_2$ determined by $(U,V)$. Then any endomorphism $\lambda\in \operatorname{End}(\mathcal{Q}_2)$ satisfies
$\lambda(u)=U$ if and only if $\lambda=\lambda_{(U,  XV)}$ for some unitary $X\in C^*(U)'\cap \mathcal{Q}_2$. 
\end{theo}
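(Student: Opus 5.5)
The plan is to work entirely inside the correspondence $\Xi$ of Theorem~\ref{T:endZ}. By part (i), every endomorphism $\lambda$ is of the form $\lambda_{(U',V')}$ with $(U',V')\in\mathcal{S}_{\Q}$, where $U'=\lambda(u)$. So $\lambda(u)=U$ exactly when $\lambda=\lambda_{(U,V')}$ for some unitary $V'$ with $(U,V')\in\mathcal{S}_{\Q}$. That is, $V'$ must lie in $\T(U,W_U)$, meaning $UV'=V'W_U$. The statement therefore reduces to a description of the set of unitaries in $\T(U,W_U)$, given that one such unitary $V$ is already known.

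\emph{Sufficiency.} Let $X$ be a unitary in $C^*(U)'\cap\Q$. Then $XV$ is a unitary, and
\[
U(XV)=XUV=XVW_U,
\]
using first $XU=UX$ and then $UV=VW_U$. Hence $(U,XV)\in\mathcal{S}_{\Q}$. By Theorem~\ref{T:endZ}(i), $\lambda_{(U,XV)}$ is an endomorphism, and it sends $u\mapsto U$ by definition.

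\emph{Necessity.} Suppose $\lambda(u)=U$. Write $\lambda=\lambda_{(U,V')}$ with $V'=\lambda(s_1)s_1^*+\lambda(s_2)s_2^*$, as in the proof of Theorem~\ref{T:endZ}. Set $X:=V'V^*$, which is a unitary. Then
\[
UX=UV'V^*=V'W_UV^*.
\]
Taking adjoints in $UV=VW_U$ gives $W_UV^*=V^*U$. Therefore
\[
UX=V'V^*U=XU.
\]
Since $X$ commutes with the unitary $U$, it also commutes with $U^*$, and hence with all of $C^*(U)$. So $X\in C^*(U)'\cap\Q$ and $V'=XV$, which gives $\lambda=\lambda_{(U,XV)}$.

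No step here is a real obstacle; the argument is pure bookkeeping once Theorem~\ref{T:endZ} is in hand. The only points needing care are two. First, the intertwining relation must be inverted correctly: $W_UV^*=V^*U$ follows from the unitarity of both $V$ and $W_U$. Second, commuting with the single unitary $U$ must be upgraded to commuting with $C^*(U)$, which is immediate because $XU=UX$ implies $XU^*=U^*X$.
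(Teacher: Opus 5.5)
Your proposal is correct and follows essentially the same route as the paper: both use the intertwining relation $UV=VW_U$ (equivalently $V^*UV=W_U$) for both pairs to show that $X=V'V^*$ commutes with $U$, and both get the converse directly from Theorem~\ref{T:endZ}. A minor point: the identity $W_UV^*=V^*U$ needs only the unitarity of $V$, not of $W_U$.
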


\begin{proof}
Let $(U, \widetilde V)\in \mathcal{S}_{\Q}$ be the pair determined by $\lambda$. Then 
\begin{align*}
V^*UV=s_1s_2^*+s_2U s_1^*,\quad \widetilde V^*U\widetilde V=s_1s_2^*+s_2U s_1^*.
\end{align*}
So $V^*UV=\widetilde V^*U\widetilde V$ implies $\widetilde V V^* U=U \widetilde V V^*$. Thus $\widetilde V= X V$ for some $X\in C^*(U)'\cap \mathcal{Q}_2$. 
The converse follows immediately from Theorem \ref{T:endZ}.
\end{proof}

As a quick remark, the extension of an extendable endomorphism $C^*(u)\to C^*(u)$, $u\mapsto U$, to $\mathcal{Q}_2$ is never unique since $C^*(U)\subseteq C^*(u)$ is abelian.
%%%%%%%%%

\medskip
Next, we recover \cite[Theorems 6.14 and 6.18]{ACR18} as consequences of Theorem \ref{P:u-uni} and obtain even more.
\begin{cor}
\label{C:u-uni}
Let $V\in \U(\Q)$. 
\begin{itemize}
\item[(i)] $\lambda_{(u, V)}\in \operatorname{End}_{C^*(u)}(\mathcal{Q}_2)\iff\lambda_{(u, V)}\in \operatorname{Aut}_{C^*(u)}(\mathcal{Q}_2)\iff V\in \U(C^*(u))$. In particular, $\operatorname{Aut}_{C^*(u)}(\mathcal{Q}_2) \cong \mathcal{U}(C^*(u))$ as groups. 

\item[(ii)] $\lambda_{(u^*, V)}\in \operatorname{End}(\mathcal{Q}_2, C^*(u))\iff\lambda_{(u^*, V)}\in \operatorname{Aut}(\mathcal{Q}_2, C^*(u))\iff$ $V=XF$ for some unitary $X$ in $C^*(u)$, where $F$ is the flip-flop unitary. 
\end{itemize} 
\end{cor}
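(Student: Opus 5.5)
We apply Theorem \ref{P:u-uni} twice, with the reference pairs $(u,I)$ and $(u^*,F)$. The two algebraic inputs are the following facts.
\begin{itemize}
\item $C^*(u)$ is a MASA in $\Q$, so $C^*(u)'\cap\Q=C^*(u)$. Since $C^*(u^*)=C^*(u)$, this also gives $C^*(u^*)'\cap\Q=C^*(u)$.
\item The monoid law from Theorem \ref{T:endZ}(ii), namely $(U_2,V_2)*(U_1,V_1)=(\lambda_2(U_1),\lambda_2(V_1)V_2)$.
\end{itemize}

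For (i), note that $\lambda_{(u,V)}$ fixes $u$ by definition, so it automatically fixes $C^*(u)$ pointwise. The content is therefore the characterization of $V$. Apply Theorem \ref{P:u-uni} with $U=u$ and the identity pair $(u,I)$. It shows that an endomorphism sends $u\mapsto u$ exactly when it equals $\lambda_{(u,X)}$ for some unitary $X\in C^*(u)'\cap\Q=C^*(u)$. Since the pair determines the endomorphism uniquely, $V=X\in\U(C^*(u))$, and conversely each such $V$ gives an endomorphism. To upgrade to an automorphism we use Theorem \ref{T:endZ}(iii): $u=\lambda_{(u,V)}(u)$ lies in the image, and $V\in C^*(u)=\lambda_{(u,V)}(C^*(u))$ also lies in the image. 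Hence $\lambda_{(u,V)}$ is an automorphism, and the three conditions are equivalent. For the group isomorphism, the monoid law gives $(u,V_2)*(u,V_1)=(\lambda_{(u,V_2)}(V_1),\,\cdot\,)$ in the second slot equal to $V_1V_2$, because $\lambda_{(u,V_2)}$ fixes $V_1\in C^*(u)$. Thus the map $V\mapsto\lambda_{(u,V)}$ is multiplicative, using that $C^*(u)$ is abelian, and it is bijective by Theorem \ref{T:endZ}(i).

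For (ii), take the reference pair $\sigma=(u^*,F)$ from the Example. By Theorem \ref{P:u-uni}, an endomorphism $\lambda$ has $\lambda(u)=u^*$ if and only if $\lambda=\lambda_{(u^*,XF)}$ with $X$ a unitary in $C^*(u^*)'\cap\Q=C^*(u)$. Any such endomorphism maps $C^*(u)$ onto $C^*(u^*)=C^*(u)$, so membership in $\operatorname{End}(\Q,C^*(u))$ is automatic. This gives the equivalence of the first and third conditions.

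It remains to see that each $\lambda_{(u^*,XF)}$ is actually an automorphism. This is the only step needing a small trick, since a direct application of Theorem \ref{T:endZ}(iii) would require locating $XF$ in the image. Instead, we factor $\lambda_{(u^*,XF)}$ through $\sigma$. For $Y\in\U(C^*(u))$, the monoid law gives
\[
(u^*,F)*(u,Y)=(\sigma(u),\sigma(Y)F)=(u^*,\sigma(Y)F).
\]
Choose $Y:=\sigma(X)$; this lies in $C^*(u)$ because $\sigma(u)=u^*$, and $\sigma^2=\id$ gives $\sigma(Y)=X$. Hence $\lambda_{(u^*,XF)}=\sigma\circ\lambda_{(u,\sigma(X))}$. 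This is a composition of two automorphisms, the second being an automorphism by part (i). This closes the cycle of equivalences in (ii).
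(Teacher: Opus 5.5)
Your proof is correct and follows the paper's argument essentially step for step. You apply Theorem~\ref{P:u-uni} to the reference pairs $(u,I)$ and $(u^*,F)=\sigma$, use Theorem~\ref{T:endZ}(iii) with $V\in C^*(u)\subseteq\lambda_{(u,V)}(\Q)$ to upgrade to an automorphism in (i), and use the factorization $\lambda_{(u^*,XF)}=\sigma\circ\lambda_{(u,\sigma(X))}$ in (ii). Beyond the paper, you make explicit two things it leaves implicit: the MASA identity $C^*(u)'\cap\Q=C^*(u)$, and the monoid computation $(u,V_2)*(u,V_1)=(u,V_1V_2)$ behind the group isomorphism. Your displayed version of that computation is slightly garbled, but the intended identity is right.
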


\begin{proof}
(i) Applying Theorem \ref{P:u-uni} to $\lambda_{(u,I)}=\id$, we get that
$\lambda_{(u,V)}\in \operatorname{End}_{C^{*}(u)}(\mathcal{Q}_2)$ if and only if $V\in C^*(u)$. To show that $\lambda_{(u,V)}$ is an automorphism for every $V\in \mathcal{U}(C^*(u))$, it suffices, by Theorem \ref{T:endZ}, to show that
$u,V\in \lambda_{(u,V)}(\mathcal{Q}_2)$. Since
$u\in \lambda_{(u,V)}(\mathcal{Q}_2)$, it follows that
$V\in \lambda_{(u,V)}(\mathcal{Q}_2)$, and the conclusion follows.

(ii) Applying Theorem \ref{P:u-uni} to $\lambda_{(u^*,F)}(=\sigma)$, we obtain that
$\lambda_{(u^*,V)}\in \operatorname{End}(\mathcal{Q}_2,C^*(u))$ if and only if
$V=XF$ for some unitary $X\in C^*(u)$. By Theorem \ref{T:endZ}, one can easily verify
\[
\sigma\circ\lambda_{(u,\sigma(X))}=\lambda_{(u^*,XF)}.
\]
Since $\sigma$ is an automorphism and
$\lambda_{(u,\sigma(X))}\in \operatorname{Aut}(\Q,C^*(u))$ by (i) above,
it follows that $\lambda_{(u^*,XF)}$ is also an automorphism.
\end{proof}
\begin{rmk}
\label{R:u-uni}
In \cite[Theorem 6.14]{ACR18}, the authors showed that there is a (group) isomorphism 
\[
C(\mathbb{T}, \mathbb{T}) \to \operatorname{Aut}_{C^*(U)}(\Q)
\]
given by $f \mapsto \beta_f$, where
\[
\beta_f(u) = u, \qquad \beta_f(s_2) = f(u)\, s_2.
\]
%%%%%
But one can easily verify that $\beta_f=\lambda_{(u,f(u))}$. In fact, if $\beta_f=\lambda_{(u,V)}$, then
\begin{align*}
V
=\beta_f(us_2)s_1^*+f(u)s_2s_2^*
=f(u)s_1s_1^*+f(u)s_2s_2^*=f(u).
\end{align*}
Therefore, Corollary \ref{C:u-uni} is a restatement of Theorem 6.14 in \cite{ACR18}. Accordingly, we write $\beta_V$ in place of $\beta_f$. 
\end{rmk}

%%%%%%%%%%%%%%%%%Rewrote
Next, we classify the second class which share the same $V$.  

%%%%%%%%%%%%%%%%%
\begin{prop}
\label{P:Vclassifi} 
Let $(U, V)\in \mathcal{S}_{\Q}$. 
Then $(\widetilde U, V)\in \mathcal{S}_{\Q}$ if and only if 
$\widetilde U=UW$ (resp.~$\widetilde W U$),
where $W$ (resp.~$\widetilde W$) satisfy 
\begin{align}
\label{E:u1*u2}
V^*WV&=s_2s_2^*+s_1W s_1^*,\\
\label{E:u1u2*}
(\text{resp.~}V^*\widetilde WV&=s_1s_1^*+s_2\widetilde W s_2^*). 
\end{align}
\end{prop}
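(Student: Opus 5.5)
The plan is a direct computation with the intertwining condition from Theorem~\ref{T:endZ}. Since $(U,V)\in\mathcal{S}_{\Q}$ is fixed, we know
\[
V^*UV=W_U=s_1s_2^*+s_2Us_1^*.
\]
For any unitary $\widetilde U$, the pair $(\widetilde U,V)$ lies in $\mathcal{S}_{\Q}$ if and only if $V^*\widetilde UV=W_{\widetilde U}$. Every unitary $\widetilde U$ can be written uniquely as $\widetilde U=UW$ with $W:=U^*\widetilde U$, and also as $\widetilde U=\widetilde WU$ with $\widetilde W:=\widetilde UU^*$. So it suffices to rewrite the membership condition in terms of $W$, respectively $\widetilde W$.

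For the first form, I would use $V^*U^*V=W_U^*$, which follows from the hypothesis. Inserting $VV^*=1$ gives
\[
V^*WV=V^*U^*V\,V^*\widetilde UV=W_U^*\,V^*\widetilde UV .
\]
Since $W_U^*$ is invertible, $V^*\widetilde UV=W_{\widetilde U}$ holds if and only if $V^*WV=W_U^*W_{\widetilde U}$. It then remains to compute
\[
W_U^*W_{\widetilde U}=(s_2s_1^*+s_1U^*s_2^*)(s_1s_2^*+s_2\widetilde Us_1^*).
\]
Using the Cuntz relations $s_i^*s_j=\delta_{ij}1$, the cross terms vanish and this product equals $s_2s_2^*+s_1U^*\widetilde Us_1^*=s_2s_2^*+s_1Ws_1^*$. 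That is exactly \eqref{E:u1*u2}.

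For the second form, the argument is the same with the order reversed. We have
\[
V^*\widetilde WV=V^*\widetilde UV\,V^*U^*V=V^*\widetilde UV\,W_U^*,
\]
so the membership condition is equivalent to $V^*\widetilde WV=W_{\widetilde U}W_U^*$. The same Cuntz-relation computation gives
\[
(s_1s_2^*+s_2\widetilde Us_1^*)(s_2s_1^*+s_1U^*s_2^*)=s_1s_1^*+s_2\widetilde UU^*s_2^*,
\]
which is \eqref{E:u1u2*}.

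Each step is an equivalence, so both directions follow at once. There is no real obstacle here. The only point needing care is to note that the correspondence $\widetilde U\leftrightarrow W$ (respectively $\widetilde W$) is bijective on unitaries, so the phrase ``for some $W$'' is harmless.
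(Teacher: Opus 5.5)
Your proof is correct and follows the paper's argument: both rewrite membership in $\mathcal{S}_{\Q}$ via Theorem~\ref{T:endZ} as $V^*\widetilde UV=W_{\widetilde U}$, then use the Cuntz relations to compute $W_U^*W_{\widetilde U}=s_2s_2^*+s_1U^*\widetilde Us_1^*$ (and the analogous product for the second form). The only difference is organizational: the paper proves the forward direction by multiplying the adjoint of $V^*UV=W_U$ with $V^*\widetilde UV=W_{\widetilde U}$, and the converse by multiplying $V^*UV=W_U$ with the equation for $W$, whereas you handle both directions in one chain of equivalences using invertibility of $W_U$; your computation of the $\widetilde W$ case directly with $\widetilde W=\widetilde UU^*$ is also cleaner than the paper's.
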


\begin{proof}
By Theorem \ref{T:endZ} one has 
\begin{align}
\label{E:U}
V^*UV&=s_1s_2^*+s_2U s_1^*,\\
\label{E:tildeU}
V^*\widetilde UV&=s_1s_2^*+s_2\widetilde U s_1^*.
\end{align}
Then 
\begin{align*}
V^*U^*\widetilde U V&=s_2s_2^*+s_1U^*\widetilde U s_1^*,\\ 
V^*U\widetilde U^*V&=s_1s_1^*+s_1U\widetilde U^*s_2^*. 
\end{align*}
Thus $W:=U^*\widetilde U$ satisfies Eq.~\eqref{E:u1*u2}. 

Conversely, if $\widetilde U=UW$ where satisfies Eq.~\eqref{E:u1*u2}. Then 
multiplying both sides of Eq.~\eqref{E:U} and Eq.~\eqref{E:u1*u2} shows that $(UW, V)\in \mathcal{S}_{\Q}$. 

The proof of the other case is similar. 
\end{proof}

%%%%%%%%%%%%%%%%%
A quick corollary of Proposition \ref{P:Vclassifi} is the following characterization of extensions of endomorphisms of $\C$ to $\Q$. When we require $V$ in Proposition \ref{P:Vclassifi} to be a unitary in $\C$, this characterization highlights an obstruction to the uniqueness of extensions of extendable endomorphisms of $\C$.
%%%%%%%%%%%%%%%%%
\begin{cor}
\label{C:Vclassifi}
Let $V\in\mathcal{U}(\mathcal{O}_2)$. Suppose that $\lambda_V\in \operatorname{End}(\C)$ is extendable to $\lambda_{(U,V)}\in \operatorname{End}(\mathcal{Q}_2)$. 
Then every extension of $\lambda_V$ is given by $(UW, V)$ where $W$ satisfies Eq.~\eqref{E:u1*u2}. 
Equivalently, 
every extension of $\lambda_V$ is given by $(\widetilde W U, V)$ where $\widetilde W$ satisfies Eq.~\eqref{E:u1u2*}.

In particular, $\lambda_V$ has a unique extension if and only if one (equivalently, both) of Eq.~\eqref{E:u1*u2} and Eq.~\eqref{E:u1u2*} has only the trivial solutions $W=\widetilde W=I$.
\end{cor}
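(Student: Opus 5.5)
The corollary follows from Theorem \ref{T:endZ} together with Proposition \ref{P:Vclassifi}; the plan is to identify extensions of $\lambda_V$ with pairs in $\mathcal{S}_{\Q}$ having second coordinate $V$.

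\textbf{Step 1: extensions are pairs with second coordinate $V$.} Let $\lambda\in\operatorname{End}(\Q)$ be any extension of $\lambda_V$, so $\lambda(s_i)=\lambda_V(s_i)=Vs_i$ for $i=1,2$. By Theorem \ref{T:endZ}(i), $\lambda=\lambda_{(\widetilde U,\widetilde V)}$ with
\[
\widetilde U=\lambda(u),\qquad \widetilde V=\lambda(s_1)s_1^*+\lambda(s_2)s_2^*.
\]
Substituting $\lambda(s_i)=Vs_i$ gives $\widetilde V=V(s_1s_1^*+s_2s_2^*)=V$. Conversely, any $(\widetilde U,V)\in\mathcal{S}_{\Q}$ gives an endomorphism $\lambda_{(\widetilde U,V)}$ sending $s_i\mapsto Vs_i$. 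It therefore agrees with $\lambda_V$ on the generators $s_1,s_2$ of $\C$, so it extends $\lambda_V$. Hence extensions of $\lambda_V$ correspond bijectively, via $\Xi$, to the set $\{\widetilde U:(\widetilde U,V)\in\mathcal{S}_{\Q}\}$.

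\textbf{Step 2: parametrize this set.} Since $(U,V)\in\mathcal{S}_{\Q}$ by hypothesis, Proposition \ref{P:Vclassifi} applies. It gives $(\widetilde U,V)\in\mathcal{S}_{\Q}$ if and only if $\widetilde U=UW$ with $W$ a unitary solving Eq.~\eqref{E:u1*u2}. Equivalently, $\widetilde U=\widetilde W U$ with $\widetilde W$ solving Eq.~\eqref{E:u1u2*}. This proves the two descriptions of all extensions.

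\textbf{Step 3: uniqueness.} The parametrizations are bijective, since $W=U^*\widetilde U$ and $\widetilde W=\widetilde U U^*$ are uniquely determined by $\widetilde U$. Also $W=I$ and $\widetilde W=I$ are always solutions, and both correspond to $\widetilde U=U$. So the extension is unique exactly when the only admissible $\widetilde U$ is $U$. This holds if and only if Eq.~\eqref{E:u1*u2} has only the solution $W=I$, and likewise if and only if Eq.~\eqref{E:u1u2*} has only the solution $\widetilde W=I$. In particular the two conditions are equivalent.

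The only point needing care is the "moreover" direction of Proposition \ref{P:Vclassifi} in Step 2. We must check that $W$ and $\widetilde W$ range over unitary solutions, which is automatic because $U$ and $\widetilde U$ are unitaries. We must also check that the displayed identity in that proof for $\widetilde W$ carries the intended indices, i.e.\ that it is Eq.~\eqref{E:u1u2*} with $s_2\widetilde W s_2^*$. This is a direct computation from Eqs.~\eqref{E:U} and \eqref{E:tildeU}, and I expect no real obstacle there.
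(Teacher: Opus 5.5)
Your proposal is correct and follows the paper's route: the paper presents this as an immediate consequence of Proposition \ref{P:Vclassifi}, and uses Theorem \ref{T:endZ} just as in your Step 1 to see that every extension of $\lambda_V$ is of the form $\lambda_{(\widetilde U,V)}$. Your caution about the indices is warranted, since the proof of Proposition \ref{P:Vclassifi} displays $V^*U\widetilde U^*V=s_1s_1^*+s_1U\widetilde U^*s_2^*$, where the last term should read $s_2U\widetilde U^*s_2^*$; taking adjoints then gives Eq.~\eqref{E:u1u2*} for $\widetilde W=\widetilde U U^*$.
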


%%%%%%%%%%%%%%%%%
\section{Uniqueness of extensions of endomorphisms of $\C$}
\label{S:unique}

Let $\lambda_V\in\operatorname{End}(\C)$ be an extendable endomorphism.
In this section, we first provide some sufficient or necessary conditions for the uniqueness of its extension to an endomorphism on $\Q$. 
Then we focus on the case that $\lambda_V$ is an automorphism. 
We show that $\lambda_V$ is an automorphism if and only if its extension is an automorphism of $\Q$, and furthermore, the extension has to be unique. 
This answers some open questions posed in \cite{ACR18, ACR21}. It is worth mentioning that some of these results are obtained as simple applications of maximality in \cite{OY26} with the help of the fundamental properties of the endomorphisms of $\Q$ discussed in Section \ref{S:fundamental}.

\subsection{Some sufficient/necessary conditions for unique extensions}

We begin with a fairly general result about the uniqueness of extendable endomorphisms of $\C$ to $\Q$. 

%%%%%%%%%%%%%%%%%%%%%Added on Apr 6
\begin{theo}\label{thm:uniqueness_criteria2}
Let $\lambda_w \in \operatorname{End}(\C)$ be extendable. Suppose one of the following conditions holds:
\begin{enumerate}
\item $\lambda_w(\F) \subseteq \F$;
\item there exists a unital $C^*$-subalgebra $B \subseteq \lambda_w(\C)$ containing $\lambda_w(s_1^n s_1^{*n})$ for all $n \ge 1$, together with a faithful conditional expectation $\mathcal{E}: \Q \to B$.
\end{enumerate}
Then $\lambda_w$ admits a unique extension to $\Q$.
\end{theo}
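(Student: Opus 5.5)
The plan is to compare two extensions directly. By Corollary \ref{C:Vclassifi}, if $\lambda_{(U,w)}$ is one extension of $\lambda_w$, then any other extension has the form $\lambda_{(UW,w)}$, where the unitary $W$ satisfies Eq.~\eqref{E:u1*u2}:
\[
w^*Ww=s_2s_2^*+s_1Ws_1^*.
\]
So it suffices to show that this equation forces $W=I$. Write $S_i:=w s_i=\lambda_w(s_i)$ and $P_n:=\lambda_w(s_1^ns_1^{*n})=S_1^nS_1^{*n}$, which is a decreasing sequence of projections in $\lambda_w(\C)$.

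\textbf{Step 1: a localization identity.} Multiply Eq.~\eqref{E:u1*u2} on the right by $s_1$ and by $s_2$. This gives $w^*Wws_1=s_1W$ and $w^*Wws_2=s_2$, that is,
\[
WS_1=S_1W \quad\text{and}\quad WS_2=S_2 .
\]
Hence $W=S_1WS_1^*+S_2S_2^*$. Iterating, and using $\sum_{k<n}S_1^kS_2S_2^*S_1^{*k}=1-P_n$, we get
\[
W-I=S_1^n(W-I)S_1^{*n}=P_n(W-I)P_n\qquad(n\ge1).
\]
Put $Y:=(W-I)^*(W-I)\ge0$. Then $Y=P_nYP_n\le\|Y\|P_n$ for every $n$, and the goal becomes showing $Y=0$.

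\textbf{Step 2, case (1).} Let $E_\alpha:\Q\to\BB$ be the faithful conditional expectation obtained by averaging the gauge action. Let $\tau$ be the unique tracial state on $\BB$; it is faithful since $\BB$ is simple. Then $\omega=\tau\circ E_\alpha$ is a faithful state on $\Q$. Since $\lambda_w(\F)\subseteq\F$, the map $\tau\circ\lambda_w|_{\F}$ is a tracial state on the UHF algebra $\F$, so it equals $\tau|_{\F}$. Consequently $\tau(P_n)=\tau(s_1^ns_1^{*n})=2^{-n}$. Because $P_n\in\F\subseteq\BB$, we have $E_\alpha(Y)\le\|Y\|P_n$, and therefore $\omega(Y)\le\|Y\|2^{-n}\to0$. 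Faithfulness of $\omega$ gives $Y=0$.

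\textbf{Step 2, case (2).} Since $P_n\in B$, the bimodule property gives $E(Y)=E(P_nYP_n)=P_nE(Y)P_n$. As $\C$ is simple, $\lambda_w$ is injective, so $E(Y)=\lambda_w(z)$ for a unique positive $z\in\C$. Injectivity then yields $z=s_1^ns_1^{*n}zs_1^ns_1^{*n}\le\|z\|s_1^ns_1^{*n}$ for all $n$. Now apply the faithful state $\tau\circ E_{\F}$ on $\C$, where $E_{\F}$ is the gauge expectation onto $\F$ and $\tau$ is its unique trace. This gives $\tau(E_{\F}(z))\le\|z\|2^{-n}\to0$, so $z=0$. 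Hence $E(Y)=0$, and faithfulness of $E$ gives $Y=0$.

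In either case $W=I$, so by Corollary \ref{C:Vclassifi} the extension is unique.

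\textbf{Main obstacle.} The real content is passing from the identity "$Y$ lives under every $P_n$" to $Y=0$, since $P_n\not\to0$ in norm. What makes it work is having a faithful positive map that sees $P_n$ with weight tending to $0$. In case (1) this weight comes from trace uniqueness on $\F$ (so $\tau\circ\lambda_w=\tau$). In case (2) it comes from transporting the problem back into $\C$ through the injectivity of $\lambda_w$.
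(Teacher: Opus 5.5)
Your proof is correct and is essentially the paper's argument. The paper iterates $\tilde{\lambda}_i(u)=T_1T_2^*+T_2\tilde{\lambda}_i(u)T_1^*$ and bounds $\|\tilde{\lambda}_1(u)-\tilde{\lambda}_2(u)\|_\psi\le 2\sqrt{\psi(T_1^{n+1}T_1^{*(n+1)})}$ for a faithful state $\psi$ that gives $\lambda_w(s_1^ns_1^{*n})$ weight $2^{-n}$; you instead localize the unitary $W$ from Corollary~\ref{C:Vclassifi} under $P_n$ via $W-I=S_1^n(W-I)S_1^{*n}$, which is the same mechanism written multiplicatively, and your case (2) state (pulling $\mathcal{E}(Y)$ back to $\C$ and applying $\tau\circ E_{\F}$) is exactly the paper's $\omega\circ\mathcal{E}$, defined more carefully than the paper's $\omega$, which nominally uses the trace on $\BB$ on elements of $\C$.
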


\begin{proof}
Let $\tau$ be the unique faithful trace on $\BB$, and $\E : \Q \to \BB$ 
%and $\mathcal{E} : \Q \to B$ be
the the faithful conditional expectation induced from the gauge action $\alpha$. Define $\omega : \lambda_w(\C) \to \mathbb{C}$ by $\omega(\lambda_w(x)) = \tau(x)$; this is a faithful state on $\lambda_w(\C)$. Then both $\tau_1 \coloneqq \tau \circ \E$ and $\omega_1 \coloneqq \omega \circ \mathcal{E}$ are faithful states on $\Q$.

Let $\tilde{\lambda}_1, \tilde{\lambda}_2 : \Q \to \Q$ be two extensions of $\lambda_w$, and set $T_i := \lambda_w(s_i)$. Repeated use of
\[
\tilde{\lambda}_i(u) = T_1 T_2^* + T_2 \tilde{\lambda}_i(u) T_1^*
\]
from \eqref{E:endo_criteria} yields
\begin{equation}
\label{E:4.1}
\tilde{\lambda}_i(u)
= \sum_{k=0}^{n} T_2^k T_1 T_2^* T_1^{*k}
+ T_2^{n+1} \tilde{\lambda}_i(u) T_1^{*(n+1)}, \qquad i=1,2.
\end{equation}

For a faithful state $\psi$ on $\Q$, define $\|a\|_{\psi} \coloneqq \sqrt{\psi(a^*a)}$. Then it follows from \eqref{E:4.1} that 
\begin{align*}
\|\tilde{\lambda}_1(u) - \tilde{\lambda}_2(u)\|_{\psi}
&\le \left\|\tilde{\lambda}_1(u) - \sum_{k=0}^{n} T_2^k T_1 T_2^* T_1^{*k}\right\|_{\psi}
+ \left\|\sum_{k=0}^{n} T_2^k T_1 T_2^* T_1^{*k} - \tilde{\lambda}_2(u)\right\|_{\psi} \\
&\le \|T_2^{n+1}\tilde{\lambda}_1(u)T_1^{*(n+1)}\|_{\psi}
+ \|T_2^{n+1}\tilde{\lambda}_2(u)T_1^{*(n+1)}\|_{\psi} \\
&=2\, \sqrt{\psi(T_1^{n+1}T_1^{*(n+1)})}
= 2\,\sqrt{(\psi \circ \lambda_w)(s_1^{n+1}s_1^{*(n+1)})}.
\end{align*}

%\noindent
(i) Take $\psi := \tau_1$. Since $\lambda_w(\F)\subseteq \F$, the state $\tau_1 \circ \lambda_w$ restricts to a faithful trace on $\F$. By uniqueness of the trace on $\F$,
\[
(\tau_1 \circ \lambda_w)(s_1^{n+1}s_1^{*(n+1)}) = \tau(s_1^{n+1}s_1^{*(n+1)}) = 2^{-(n+1)}.
\]
Hence $\|\tilde{\lambda}_1(u) - \tilde{\lambda}_2(u)\|_{\psi} \to 0$, and so $\tilde{\lambda}_1(u) = \tilde{\lambda}_2(u)$.

%\noindent
(ii) Take $\psi := \omega_1$. Then it follows from the conditional expectation $\mathcal{E}$ and defnition of $\omega_1$ that
\[
(\psi \circ \lambda_w)(s_1^{n+1}s_1^{*(n+1)})
= \omega(\lambda_w(s_1^{n+1}s_1^{*(n+1)}))
= \tau(s_1^{n+1}s_1^{*(n+1)})
= 2^{-(n+1)}.
\]
Thus $\|\tilde{\lambda}_1(u) - \tilde{\lambda}_2(u)\|_{\psi} \to 0$, and again $\tilde{\lambda}_1(u) = \tilde{\lambda}_2(u)$.

Therefore the extension is unique for both cases.
\end{proof}
\begin{cor}
Let $\lambda_w \in \operatorname{End}(\C)$ be extendable with $\lambda_w(\D) = \D$. Then $\lambda_w$ admits a unique extension to $\Q$.
\end{cor}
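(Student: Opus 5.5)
We will deduce the corollary from Theorem~\ref{thm:uniqueness_criteria2}(ii), taking $B:=\D$. Since $\lambda_w(\D)=\D$, we have $\D=\lambda_w(\D)\subseteq\lambda_w(\C)$, so $\D$ is a unital $C^*$-subalgebra of $\lambda_w(\C)$. The projections $s_1^ns_1^{*n}$ lie in $\D$ for all $n\ge1$, hence $\lambda_w(s_1^ns_1^{*n})\in\lambda_w(\D)=\D=B$. This gives the containment required in condition (ii).

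It remains to produce a faithful conditional expectation $\mathcal{E}:\Q\to\D$. Here we use that $\D$ is a Cartan subalgebra of $\Q$, as recalled in Section~\ref{S:Pre}. By definition, a Cartan subalgebra comes with a faithful conditional expectation onto it, and this is exactly what we need.

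If one prefers a concrete construction, there are two routes. One is the groupoid model: $\Q$ is the $C^*$-algebra of an étale, effective, amenable groupoid with unit space $\z$ (via $C(\z)\rtimes(\bZ\rtimes[2\rangle)$), and restriction of functions to the unit space gives the expectation. The other composes the gauge expectation $\E:\Q\to\BB$ with the expectation of the Bunce--Deddens algebra $\BB$ onto its diagonal $\D$. Faithfulness follows from amenability of the groupoid.

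The only point that needs care is this existence of a faithful expectation onto $\D$. It is standard, so there is essentially no obstacle. Once it is in hand, condition (ii) of Theorem~\ref{thm:uniqueness_criteria2} holds, and $\lambda_w$ has a unique extension to $\Q$.
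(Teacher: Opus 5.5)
Your proposal is correct and is the paper's own argument: take $B=\D$ in Theorem~\ref{thm:uniqueness_criteria2}(ii), use $\lambda_w(\D)=\D$ for the containment $\lambda_w(s_1^ns_1^{*n})\in B\subseteq\lambda_w(\C)$, and use the canonical faithful conditional expectation $\Q\to\D$, which exists because $\D$ is Cartan. The paper states this in one line; your explicit check of the hypotheses, and your alternative constructions of the expectation, are fine but not needed.
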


\begin{proof}
Take $B = \D$ and $\mathcal{E}: \Q \to \D$ the canonical faithful conditional expectation in Theorem \ref{thm:uniqueness_criteria2}.
\end{proof}

As a by-product, Theorem \ref{thm:uniqueness_criteria2} (a uniqueness result) yields a necessary condition for extendability. 

\begin{cor}
Let $w \in \F$ and $\lambda_w \in \operatorname{End}(\C)$ be extendable to $\tilde{\lambda} \in \operatorname{End}(\Q)$. Then $\tilde{\lambda}(u) \in \BB$.
\end{cor}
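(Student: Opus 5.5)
The plan is to use the gauge action: show that $\tilde{\lambda}(u)$ is fixed by $\alpha_t$ for every $t\in\mathbb{T}$, since $\BB$ is the fixed-point algebra of $\alpha$. Fix $t\in\mathbb{T}$ and set $\mu_t:=\alpha_t^{-1}\circ\tilde{\lambda}\circ\alpha_t\in\operatorname{End}(\Q)$. Then $\mu_t(u)=\alpha_t^{-1}(\tilde{\lambda}(u))$, because $\alpha_t(u)=u$. It remains to check that $\mu_t$ is another extension of $\lambda_w$.

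For this, note that $\alpha_t(s_i)=ts_i$, which gives
\[
\mu_t(s_i)=\alpha_t^{-1}(t\,\lambda_w(s_i))=t\,\alpha_t^{-1}(w s_i)=t\,\alpha_t^{-1}(w)\,\bar t s_i=\alpha_t^{-1}(w)s_i .
\]
Since $w\in\F$, the fixed-point algebra of $\alpha|_{\C}$, we have $\alpha_t^{-1}(w)=w$. Hence $\mu_t(s_i)=ws_i=\lambda_w(s_i)$. So $\mu_t$ agrees with $\lambda_w$ on the generators $s_1,s_2$ of $\C$, and therefore $\mu_t$ extends $\lambda_w$.

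Next we need uniqueness of the extension, via Theorem \ref{thm:uniqueness_criteria2}(i). This requires $\lambda_w(\F)\subseteq\F$. Every element of $\F$ is a limit of combinations of words $s_\mu s_\nu^*$ with $|\mu|=|\nu|$. Under $\lambda_w$, such a word maps to a product of the $ws_i$ and $(ws_j)^*$ with equal numbers of each. Since $w$ and $w^*$ are gauge invariant, each such product is gauge invariant. Thus $\lambda_w(\F)\subseteq\F$. (Equivalently, $\lambda_w$ commutes with $\alpha|_{\C}$ by the computation above.)

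By Theorem \ref{thm:uniqueness_criteria2}(i), $\lambda_w$ has a unique extension, so $\mu_t=\tilde{\lambda}$. Hence $\alpha_t^{-1}(\tilde{\lambda}(u))=\tilde{\lambda}(u)$ for all $t$, that is, $\tilde{\lambda}(u)\in\BB$. None of these steps is a real obstacle; the only point needing care is the verification $\lambda_w(\F)\subseteq\F$, which is routine.
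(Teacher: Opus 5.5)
Your proposal is correct and follows essentially the same route as the paper: twist $\tilde{\lambda}$ by the gauge action to get another extension of $\lambda_w$, then invoke the uniqueness in Theorem~\ref{thm:uniqueness_criteria2}(i), which applies because $w\in\F$ forces $\lambda_w(\F)\subseteq\F$. The differences are cosmetic. You realize the second extension as $\alpha_t^{-1}\circ\tilde{\lambda}\circ\alpha_t$, so it is automatically an endomorphism, whereas the paper checks that $\alpha_t(\tilde{\lambda}(u))$ still satisfies \eqref{E:endo_criteria} and appeals to Theorem~\ref{T:endZ}; and you argue directly rather than by contradiction.
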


\begin{proof}
Recall that $\mathcal{B}_2$ is the fixed point algebra of the gauge acton $\alpha$ of $\Q$ on $\mathbb{T}$.
Suppose, for contradiction, that $\tilde{\lambda}(u) \notin \BB$. Then there exists $t \in \mathbb{T}$ such that 
\(
\alpha_t(\tilde{\lambda}(u)) \neq \tilde{\lambda}(u).
\)
%where $\alpha_t$ denotes the standard gauge automorphism of $\Q$.  
Using \eqref{E:endo_criteria}, we have
\[
\tilde{\lambda}(u) = T_1 T_2^* + T_2 \tilde{\lambda}(u) T_1^*,
\]
where $T_i = \lambda_w(s_i) = w s_i$ ($i=1,2$).
Applying $\alpha_t$, we get
\[
\alpha_t(\tilde{\lambda}(u)) = T_1 T_2^* + T_2 \alpha_t(\tilde{\lambda}(u)) T_1^*.
\]
Then, by Theorem \ref{T:endZ}, the map $\tilde{\lambda}_1 : \Q \to \Q$ defined by
\[
\tilde{\lambda}_1(u) := \alpha_t(\tilde{\lambda}(u)), \qquad \tilde{\lambda}_1(s_i) := w s_i
\]
is another extension of $\lambda_w$. Clearly $\lambda_w(\F)\subseteq \F$ as $w\in \F$. So this contradicts the uniqueness of the extension from Theorem \ref{thm:uniqueness_criteria2}(i). Hence $\alpha_t(\tilde{\lambda}(u)) = \tilde{\lambda}(u)$ for all $t \in \mathbb{T}$, and therefore $\tilde{\lambda}(u) \in \BB$.
\end{proof}

The following is motivated by the representation theory of $\Q$ in \cite{LL12}. 

\begin{prop}
Let $\lambda \in \operatorname{End}(\C)$ with $\lambda(s_i)=S_i\in \Q$ ($i=1,2$). If, in the canonical representation of $\Q$, the subspaces $\bigcap\limits_{n=1}^\infty \operatorname{Ran}(S_i^n)$ are finite-dimensional for $i=1,2$, then $\lambda$ has at most one extension to an endomorphism on $\Q$.
\end{prop}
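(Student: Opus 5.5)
Suppose $\tilde\lambda_1,\tilde\lambda_2$ are two extensions of $\lambda$, and put $U_j:=\tilde\lambda_j(u)$. The plan is to show that the difference $D:=U_1-U_2$ lives on a finite-dimensional subspace, and then that it must vanish. By \eqref{E:endo_criteria} each $U_j$ satisfies $U_j=S_1S_2^*+S_2U_jS_1^*$, so
\[
D=S_2DS_1^*.
\]
Iterating gives $D=S_2^nDS_1^{*n}$ for every $n\ge1$. Taking adjoints, $D^*=S_1^nD^*S_2^{*n}$ for every $n$.

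\textbf{Step 1: range of $D$.} In the canonical representation on $\ell^2(\bZ)$ we have $\operatorname{Ran}(D)\subseteq\operatorname{Ran}(S_2^n)$ for all $n$. Hence $\operatorname{Ran}(D)\subseteq K_2:=\bigcap_n\operatorname{Ran}(S_2^n)$, and likewise $\operatorname{Ran}(D^*)\subseteq K_1:=\bigcap_n\operatorname{Ran}(S_1^n)$. By hypothesis both are finite-dimensional, so $D$ is a finite-rank operator.

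\textbf{Step 2: $D$ is compact, hence zero.} The canonical representation is faithful, and $\Q$ is simple and purely infinite. A simple infinite-dimensional C*-algebra acting irreducibly contains no nonzero compact operators; if it did, it would contain all compacts and hence be a non-simple extension (or equal to $\K$), contradicting pure infiniteness. We need irreducibility of the canonical representation, which holds because $C^*(u)$ acts by the bilateral shift and $\D$ acts multiplicatively. Alternatively, $\Q$ is simple and not elementary, so its intersection with $\K(\ell^2(\bZ))$ is an ideal of $\Q$, which must be $0$ or all of $\Q$. It cannot be all of $\Q$, since $1$ is not compact on the infinite-dimensional space. Since $D\in\Q$ is finite rank, $D=0$, i.e.\ $U_1=U_2$. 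Both extensions agree on $s_1,s_2$ and on $u$, so they coincide by Theorem \ref{T:endZ}.

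The main obstacle is the invariance bookkeeping in Step 1. The equation $D=S_2^nDS_1^{*n}$ shows directly that $\operatorname{Ran}D\subseteq\operatorname{Ran}S_2^n$, but I should double-check that the finite-dimensionality hypothesis is really needed for the corresponding indices. If only one intersection were finite-dimensional, that would already suffice, since a finite-dimensional range alone gives finite rank. Step 2 is standard once faithfulness of the canonical representation is invoked (it is faithful because $\Q$ is simple).
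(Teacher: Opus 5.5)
Your argument is correct, and it takes a more direct route than the paper's.

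\textbf{The paper's route.} The paper iterates $\tilde\lambda_j(u)=S_1S_2^*+S_2\tilde\lambda_j(u)S_1^*$ for each extension separately. It then imports the Wold-type decomposition of Larsen--Li \cite[Proposition~4.1, Remark~4.2]{LL12}: each $\tilde\lambda_j(u)=V+\widetilde W_j$. Here $V$ is the strong limit of the partial sums $\sum_k S_2^kS_1S_2^*S_1^{*k}$, the same for both extensions. The term $\widetilde W_j$ is a partial isometry carrying $\bigcap_n\operatorname{Ran}(S_1^n)$ unitarily onto $\bigcap_n\operatorname{Ran}(S_2^n)$ and vanishing on the complement. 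Finite-dimensionality makes $\widetilde W_1-\widetilde W_2$ compact, and $\Q\cap\K(\ell^2(\bZ))=\{0\}$ concludes.

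\textbf{Your route.} By subtracting first, you let the common part cancel algebraically. The identity $D=S_2^nDS_1^{*n}$ then gives $\operatorname{Ran}(D)\subseteq\bigcap_n\operatorname{Ran}(S_2^n)$, a closed subspace since each $S_2^n$ is an isometry. This needs no strong limits and no external structure theorem. Your ideal argument for $\Q\cap\K(\ell^2(\bZ))=\{0\}$ (simplicity of $\Q$ together with $1\notin\K$) is the right one; the irreducibility detour is unnecessary. You are also right that one finite-dimensional intersection already suffices.

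\textbf{What each buys.} The paper's route gives a description of all extensions at once: they share the part $V$ and differ only by partial isometries from $\bigcap_n\operatorname{Ran}(S_1^n)$ onto $\bigcap_n\operatorname{Ran}(S_2^n)$. Incidentally, the existence of any extension already forces these two spaces to have the same dimension. Indeed, $us_1=s_2u$ gives $\tilde\lambda(u)S_1^n=S_2^n\tilde\lambda(u)$, so the unitary $\tilde\lambda(u)$ maps one space onto the other. Hence, for extendable $\lambda$, the two-sided hypothesis in the statement is equivalent to your one-sided one.
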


\begin{proof}
It suffices to show that if $\lambda$ is an extendable endomorphism to $\Q$, 
then the extension is unique. For a contradiction, assume that $\lambda$ has two distinct extensions $\tilde{\lambda}_1, \tilde{\lambda}_2 \in \operatorname{End}(\Q)$. Repeated use of  \(
\tilde{\lambda}(u) = S_1 S_2^* + S_2 \tilde{\lambda}(u) S_1^*
\)
gives
\begin{equation}\label{E:wold_decom}
\tilde{\lambda}_1(u) 
 = \sum_{i=1}^{n} S_2^i S_1 S_2^* S_1^{*i} 
   + S_2^{n+1}\tilde{\lambda}_1(u)S_1^{*\, n+1}.
\end{equation}
By \cite[Proposition~4.1]{LL12}, the sum 
\(\sum\limits_{i=1}^{n} S_2^n S_1 S_2^* S_1^{*i}\) strongly converges  to some operator $V$, and there exists
\(
\widetilde{W}_1 = \text{SOT-}\lim\limits_{n\to \infty} S_2^{\,n+1} \tilde{\lambda}_1(u) S_1^{*\,n+1}
\)
such that 
\begin{equation}\label{E:decom_1}
\tilde{\lambda}_1(u) = V + \widetilde{W}_1.  
\end{equation}
Here $\widetilde{W}_1$ is a partial isometry on $\ell^2(\mathbb{Z})$ which restricts to a unitary 
\[
\widetilde{W}_1 : \bigcap_{n=1}^\infty \operatorname{Ran}(S_1^n) \;\longrightarrow\; \bigcap_{n=1}^\infty \operatorname{Ran}(S_2^n),
\]
and vanishes on the orthogonal complements \cite[Proposition 4.1 \& Remark 4.2]{LL12}. Similarly, we have \begin{equation}\label{E:decom_2}
\tilde{\lambda}_2(u) = V + \widetilde{W}_2
\end{equation}
With the finite dimensionality of $\bigcap\limits_{n=1}^\infty \operatorname{Ran}(S_i^n)$, we have that $\widetilde{W}_1$ and $\widetilde{W}_2$ are compact operators. Hence 
\(
\tilde{\lambda}_1(u) - \tilde{\lambda}_2(u) 
   = \widetilde{W}_1 - \widetilde{W}_2 \;\in\; \K(\ell^2(\mathbb{Z})).
\)
This is impossible as $\Q \cap \K(\ell^2(\mathbb{Z})) = \{0\}$.
\end{proof}

The necessary condition below, together with Proposition~\ref{P:Vclassifi}, suggests that uniqueness of extensions is a natural rather than exceptional phenomenon. Thus, non-unique extensions, if they exist, are expected to arise only in special cases.

\begin{prop}\label{prop:uniqueness}
Suppose that $\lambda_w$ is an extendable endomorphism of $\mathcal{O}_2$. If $\lambda_w$ admits a unique extension $\tilde{\lambda}_w$ to $\mathcal{Q}_2$, then
\[
\lambda_w(\mathcal{O}_2)'\cap \mathcal{Q}_2
\subseteq
C^*(\tilde{\lambda}_w(u))'\cap \mathcal{Q}_2 .
\]
\end{prop}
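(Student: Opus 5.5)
Given a unitary $X\in\lambda_w(\C)'\cap\Q$, I will produce a second extension of $\lambda_w$ by conjugating $\tilde{\lambda}_w$ by $X$. Uniqueness will then force $X$ to commute with $U:=\tilde{\lambda}_w(u)$. Since $\lambda_w(\C)'\cap\Q$ is a unital $C^*$-algebra, it is the linear span of its unitaries. It therefore suffices to prove that every unitary $X$ in it commutes with $U$. Then $X$ commutes with $U^*$ as well, hence with all of $C^*(U)$, and linearity gives the inclusion.

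Fix such a unitary $X$ and put $\mu:=\operatorname{Ad}(X)\circ\tilde{\lambda}_w$, that is, $\mu(a)=X\tilde{\lambda}_w(a)X^*$. This is a composition of a unital endomorphism with an inner automorphism, so $\mu\in\operatorname{End}(\Q)$. For $x\in\C$ we have $\tilde{\lambda}_w(x)=\lambda_w(x)\in\lambda_w(\C)$, and $X$ commutes with $\lambda_w(\C)$, so $\mu(x)=X\lambda_w(x)X^*=\lambda_w(x)$. Thus $\mu$ is also an extension of $\lambda_w$. By the assumed uniqueness, $\mu=\tilde{\lambda}_w$. Evaluating at $u$ gives $XUX^*=U$, i.e.\ $X\in C^*(U)'\cap\Q$.

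To phrase this in the language of Section~\ref{S:fundamental}, one can check via Theorem~\ref{T:endZ} that $\mu=\lambda_{(XUX^*,\,w)}$. Indeed, $\mu(s_i)=ws_i$ because $X$ commutes with $ws_i=\lambda_w(s_i)$. So $(XUX^*,w)\in\mathcal{S}_{\Q}$, and the injectivity of $\Xi$ together with uniqueness gives $XUX^*=U$. No step here is a genuine obstacle. The only point needing care is the reduction from unitaries to the whole relative commutant, which is just the standard fact that a unital $C^*$-algebra is spanned by its unitaries.
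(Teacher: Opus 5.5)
Your proposal is correct and is essentially the paper's proof: the paper's map $\phi$, defined by $\phi(u)=v\tilde{\lambda}_w(u)v^*$ and $\phi(s_i)=T_i$ and checked through relation \eqref{E:endo_criteria} and Theorem~\ref{T:endZ}, is exactly your $\operatorname{Ad}(X)\circ\tilde{\lambda}_w$, and uniqueness then forces the unitary to commute with $\tilde{\lambda}_w(u)$. Writing the map as $\operatorname{Ad}(X)\circ\tilde{\lambda}_w$ makes it an endomorphism with no verification needed, and your reduction from unitaries to the whole relative commutant (every element of a unital $C^*$-algebra is a linear combination of unitaries) fills in a step the paper leaves implicit.
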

\begin{proof}
Suppose $\lambda_w$ extends to $\tilde{\lambda}_w$. Then, we have
\[
\tilde{\lambda}_w(u)=T_1T_2^*+T_2\tilde{\lambda}_w(u)T_1^*,
\]
where $T_i=ws_i$ ($i=1,2$). 
Let $v\in \lambda_w(\mathcal{O}_2)' \cap \mathcal{Q}_2$ be a unitary. Then
\[
v\tilde{\lambda}_w(u)v^*
= T_1T_2^* + T_2\bigl(v\tilde{\lambda}_w(u)v^*\bigr)T_1^* .
\]
Hence the map $\phi:\mathcal{Q}_2\to\mathcal{Q}_2$ defined by
$\phi(u)=v\tilde{\lambda}_w(u)v^*$ and $\phi(s_i)=T_i$ is an endomorphism extending $\lambda_w$.
Assume $\lambda_w$ has a unique extension. Then $\phi(u)=\tilde{\lambda}_w(u)$, so $v$ commutes with $\tilde{\lambda}_w(u)$. Thus
$v\in C^*(\tilde{\lambda}_w(u))' \cap \mathcal{Q}_2$.
\end{proof}

\subsection{Some applications of the maximality of $\C\subseteq\Q$ to $\operatorname{End}(\Q)$}
%%%%%%%%%%%%%%%%%%%

\label{SS:app}

Combining Theorem \ref{T:endZ} with the maximality of the inclusion $\C\subseteq \Q$ in \cite{OY26} yields several special properties of extendable automorphisms of $\C$.

\begin{theo}
\label{T:auteu}
Suppose that $\lambda_V\in \operatorname{End}(\mathcal{O}_2)$ can be extended to $\lambda\in \operatorname{End}(\mathcal{Q}_2)$. Then 
\begin{enumerate}
\item  $\lambda_V\in \operatorname{Aut}(\mathcal{O}_2) \iff \lambda\in \operatorname{Aut}(\mathcal{Q}_2)$;

\item $\lambda$ is the unique extension of $\lambda_V$
provided that $\lambda_V\in \operatorname{Aut}(\mathcal{O}_2)$. 
\end{enumerate}
\end{theo}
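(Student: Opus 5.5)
The plan is to use the maximality result of \cite{OY26}: there is no $C^*$-algebra strictly between $\C$ and $\Q$. Combined with Theorem~\ref{T:endZ}(iii), this controls whether $\lambda$ is surjective. Throughout, write $\lambda=\lambda_{(U,V)}$ with $V\in\U(\C)$, so that $\lambda|_{\C}=\lambda_V$.

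For (1), the direction ``$\Leftarrow$'' comes first. Suppose $\lambda_V\in\operatorname{Aut}(\C)$. Then $\lambda(\Q)\supseteq\lambda(\C)=\lambda_V(\C)=\C$, so $\lambda(\Q)$ is a $C^*$-subalgebra of $\Q$ containing $\C$. By maximality, either $\lambda(\Q)=\C$ or $\lambda(\Q)=\Q$.

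To exclude $\lambda(\Q)=\C$, note that $\Q$ is simple, so $\lambda$ is injective. Hence $\lambda$ would be an isomorphism $\Q\cong\C$. This is impossible on $K$-theoretic grounds, since $K_0(\Q)=\mathbb{Z}$ while $K_0(\C)=0$. (Alternatively: $[1]=0$ in $K_0(\C)$ but $[1]\neq0$ in $K_0(\Q)$.) Therefore $\lambda(\Q)=\Q$, so $\lambda$ is a bijective endomorphism, i.e.\ an automorphism. One can also phrase this via Theorem~\ref{T:endZ}(iii): $u,V\in\lambda(\Q)=\Q$.

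For ``$\Rightarrow$'', suppose $\lambda\in\operatorname{Aut}(\Q)$. The restriction $\lambda_V$ is injective, and it remains to show $\lambda_V(\C)=\C$. Let $\mu=\lambda^{-1}$. I will show $\mu(\C)\subseteq\C$, which gives $\C\subseteq\lambda(\C)=\lambda_V(\C)\subseteq\C$.

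To get $\mu(\C)\subseteq\C$, observe that $\mu(\C)$ is a copy of $\C$ inside $\Q$ and $\C=\lambda_V(\mu(\C))$. Consider the intermediate algebra $C^*(\C,\mu(\C))$. This is the step I expect to be the main obstacle, since maximality alone does not immediately force $\mu(\C)\subseteq\C$.

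The first thing I would try is the following. The algebra $\lambda_V(\C)\subseteq\C$ is the image of an automorphism $\lambda$ of $\Q$. Hence the pair $\lambda_V(\C)\subseteq\lambda(\Q)=\Q$ is isomorphic to $\C\subseteq\Q$ and is therefore also a maximal inclusion. Since $\lambda_V(\C)\subseteq\C\subseteq\Q$, maximality gives either $\lambda_V(\C)=\C$, which is what we want, or $\C=\Q$. The latter is absurd, because $\C$ is a proper subalgebra of $\Q$ ($u\notin\C$). This settles ``$\Rightarrow$'' without the detour through $\mu$.

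For (2), suppose $\lambda_V\in\operatorname{Aut}(\C)$ and $\lambda'=\lambda_{(U',V)}$ is another extension. By (1), both $\lambda$ and $\lambda'$ are automorphisms. Then $\psi:=\lambda^{-1}\circ\lambda'$ is an automorphism of $\Q$ fixing $\C$ pointwise, and $\psi(u)=\lambda^{-1}(U')$. I would show that $\psi=\id$.

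Write $\psi=\lambda_{(\psi(u),I)}$. By Theorem~\ref{T:endZ}, the unitary $\psi(u)$ then satisfies $\psi(u)=s_1s_2^*+s_2\psi(u)s_1^*$. Iterating this relation as in \eqref{E:4.1} gives
\[
\psi(u)=\sum_{k=0}^n s_2^ks_1s_2^*s_1^{*k}+s_2^{n+1}\psi(u)s_1^{*(n+1)},
\]
and the same expansion holds with $u$ in place of $\psi(u)$. Now apply the estimate from the proof of Theorem~\ref{thm:uniqueness_criteria2}(i) with the identity endomorphism, which trivially satisfies $\id(\F)\subseteq\F$. The remainder terms have $\|\cdot\|_{\tau_1}$-norm at most $2^{-(n+1)/2}$, so $\psi(u)=u$.

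Equivalently, $\id_{\C}$ has a unique extension by Theorem~\ref{thm:uniqueness_criteria2}(i). Hence $\psi=\id$, and therefore $\lambda'=\lambda$.
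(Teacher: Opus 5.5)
Your proof is correct, and it departs from the paper's argument in two places.

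\textbf{(1), first direction.} Your proof that $\lambda_V\in\operatorname{Aut}(\C)$ implies $\lambda\in\operatorname{Aut}(\Q)$ is the paper's argument: tightness of $\C\subseteq\Q$ plus $\Q\not\cong\C$. Your labels $\Leftarrow$ and $\Rightarrow$ are swapped relative to the statement. The paragraph about $\mu=\lambda^{-1}$ is an abandoned detour and can be deleted.

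\textbf{(1), converse.} The paper proves $\lambda\in\operatorname{Aut}(\Q)\Rightarrow\lambda_V\in\operatorname{Aut}(\C)$ by contradiction. If $\lambda_V$ were proper, then $V\notin\lambda_V(\C)$. Theorem~\ref{T:endZ}(iii) then gives $\widetilde V\in\Q\setminus\C$ with $\lambda(\widetilde V)=V$. Tightness forces $C^*(s_1,s_2,\widetilde V)=\Q$, and applying $\lambda$ gives $\Q\subseteq\C$.

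You instead transport tightness along the automorphism $\lambda$ and squeeze $\C$ between $\lambda_V(\C)$ and $\Q$. This is shorter and needs neither the fact about proper endomorphisms of $\C$ nor Theorem~\ref{T:endZ}(iii). It also shows a bit more: any automorphism of $\Q$ mapping $\C$ into $\C$ maps it onto $\C$.

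\textbf{(2).} The paper uses Proposition~\ref{P:Vclassifi} to write any other extension as $(UW,V)$ with $W$ solving \eqref{E:u1*u2}. It notes that $W$ commutes with $Vs_1=\lambda(s_1)$. Then the imported fact $C^*(s_1)'\cap\Q=\mathbb{C}I$, transported by $\lambda$, forces $W=I$.

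You reduce to the identity via $\psi=\lambda^{-1}\circ\lambda'$, which extends $\id_{\C}$. You then apply the trace estimate of Theorem~\ref{thm:uniqueness_criteria2}(i) to $\id_{\C}$. There is no circularity, since that theorem is proved earlier and independently.

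Both arguments use the invertibility of $\lambda$ from (1) to reduce to the identity case. Yours relies only on the paper's own uniqueness criterion; the paper's is purely algebraic but imports the relative commutant computation.
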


\begin{proof}
(i) ``$\Longrightarrow$'':  If $\lambda_V\in \operatorname{Aut}(\mathcal{O}_2)$, then $\mathcal{O}_2\subseteq \lambda(\mathcal{Q}_2)\subseteq Q_2$. 
But since $\lambda(\mathcal{Q}_2)\cong \mathcal{Q}_2$ and $\mathcal{Q}_2\not\cong \mathcal{O}_2$, one has $ \lambda(\mathcal{Q}_2)=\mathcal{Q}_2$ by the tightness of the inclusion $\C\subseteq \Q$ (\cite[Theorem 3.7]{OY26}). 
Thus $\lambda\in  \operatorname{Aut}(\mathcal{Q}_2)$. 

``$\Longleftarrow$'': To the contrary, assume that $\lambda_V\in \operatorname{End}(\mathcal{O}_2)$ is proper. Then $V\not\in \lambda_V(\mathcal{O}_2)$ by a property of endomorphisms of $\C$. 
So by Theorem \ref{T:endZ}(iii) there is a unitary $\widetilde V\in \mathcal{U}(\Q)\setminus {\mathcal{U}(\C)}$ 
such that $\lambda(\widetilde V)=V$. 
 
Consider the sub-$C^*$-algebra $\mathcal{A}:=C^*(s_1, s_2, \widetilde V)$.
Then $\mathcal{O}_2 \subsetneq \mathcal{A} \subseteq \mathcal{Q}_2$, and hence $\mathcal{A}=\mathcal{Q}_2$ by \cite[Theorem 3.7]{OY26}.
Applying $\lambda$, we obtain
\[
\mathcal{Q}_2
= \lambda(\mathcal{Q}_2)
= \lambda(\mathcal{A})
= C^*\big(\lambda(s_1), \lambda(s_2), \lambda(\widetilde V)\big)
= C^*\big(\lambda(s_1), \lambda(s_2), V\big).
\]
Since $\lambda(s_1), \lambda(s_2), V$ all belong to $\mathcal{O}_2$, it follows that
$
\mathcal{Q}_2 \subseteq \mathcal{O}_2
$,
a contradiction. Therefore, $\lambda_V$ is an automorphism of $\mathcal{O}_2$.

(ii)
By Proposition \ref{P:Vclassifi}, it suffices to show that Eq.~\eqref{E:u1*u2} has only the trivial solution. Let $W$ be a solution of Eq.~\eqref{E:u1*u2}. 
Then $Vs_1W=WVs_1$. Since $C^*(s_1)'\cap \mathcal{Q}_2=\mathbb CI$ (\cite{ACR21}) and $\lambda$ is an automorphism as shown in (i), we obtain 
$C^*(Vs_1)'\cap \mathcal{Q}_2=C^*(\lambda(s_1))'\cap \lambda(\mathcal{Q}_2)=\lambda(C^*(s_1)'\cap \mathcal{Q}_2)=\mathbb C I$. 
Thus $W=\alpha I$ for some $\alpha \in \mathbb T$. Then substituting it to Eq.~\eqref{E:u1*u2} yields $\alpha s_2s_2^*=s_2s_2^*$. Hence $\alpha=1$. 
\end{proof}

Notice that Theorem \ref{T:auteu} (ii) is proved in \cite{ACR18} in a very different way.

%%%%%%%%%%%%%%%%%%%%

\section{Diagonal automorphisms of \(\Q\)}
\label{S:diagaut}

Our main goal in this section is to explicitly construct an automorphism of \( \Q \) that, to the best of our knowledge, is not a composition of the extendable automorphisms of \( \C \) currently known in the literature. This automorphism is further used to show that 
the canonical image of $\operatorname{Aut}(\Q, \C)$ in $\operatorname{Out}(\Q)$ is non-abelian. Accordindly, we resolve several open questions on extendable diagonal automorphisms of \( \C \) posed in \cite{ACR21}.

To achieve our goal, we first need to construct a special real continuous function on $\mathbf{Z}_2$.
  
%%%%%%%%%%%%%%%%%%%%%
\begin{theo}\label{T:cont_fn}
There exists a continuous function $\phi:\mathbf{Z}_2 \to \mathbb{R}$ such that
  \begin{enumerate}
    \item $\phi(k) = \phi(2k) + \phi(2k-1)$ for all $k \in \z$,
    \item $\phi(\mathbf{Z}_2)$ is not contained in $\mathbb{Z}$.
\end{enumerate}
\end{theo}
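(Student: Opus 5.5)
The plan is to reduce the problem to one compatibility condition at the single point $0$. First I note that (ii) is essentially automatic. A continuous map $\mathbf{Z}_2\to\mathbb{Z}$ is locally constant, so it is enough to show that every locally constant solution of (i) vanishes and then to produce one nonzero continuous solution. Let $V_n$ be the functions that depend only on $k \bmod 2^n$, and let $(T\phi)(k):=\phi(2k)+\phi(2k-1)$. Then $T$ maps $V_n$ into $V_{n-1}$. So a locally constant fixed point of $T$ lies in $V_n$ for some $n$, hence in $V_{n-1}$, and so on down to $V_0$, where $c=2c$ forces $c=0$. It therefore suffices to construct a continuous $\phi\not\equiv 0$ with $T\phi=\phi$.

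For the construction I parametrize the solutions by their values on the odd $2$-adic integers. Write a nonzero $m\in\mathbf{Z}_2$ as $m=2^n j$ with $j$ odd. Relation (i) says $\phi(2k)=\phi(k)-\phi(2k-1)$, and $2k-1$ is odd. So if $g:=\phi|_{1+2\mathbf{Z}_2}$ is prescribed, $\phi$ is forced on $\mathbf{Z}_2\setminus\{0\}$:
\[
\phi(2^n j)=g(j)-\sum_{i=1}^{n} g(2^i j-1).
\]
Conversely, this formula satisfies (i) at every $k\neq 0$. At $k=0$, (i) reduces to $\phi(-1)=0$, that is, $g(-1)=0$. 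Continuity away from $0$ holds as soon as $g$ is continuous, because on each set $2^n(1+2\mathbf{Z}_2)$ the formula is a finite sum of continuous functions.

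Everything therefore comes down to continuity at $0$. This requires that $g(j)-\sum_{i=1}^n g(2^ij-1)$ converge, uniformly in odd $j$, to a constant $c=\phi(0)$. Since $|(2^ij-1)+1|_2=2^{-i}$, it is natural to take $g$ in the Banach space of continuous $g$ on the odd integers with $|g(x)|\le C|x+1|_2$. For such $g$ the series $G(j):=\sum_{i\ge1}g(2^ij-1)$ converges uniformly, and the remaining condition is the functional equation
\[
g = G + c \quad\text{on } 1+2\mathbf{Z}_2, \qquad g(-1)=0 .
\]
Solving this equation is the main obstacle. Note that $g\mapsto G$ is not a contraction in the naive weighted norm, since the sum of the bounds $C2^{-i}$ is only $C$. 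My first attempt will be to split the odd integers by the classes $-1+2^{r}(1+2\mathbf{Z}_2)$, $r\ge1$. The arguments $2^ij-1$ lie in the class with $r=i+v_2(j)$, which is deeper than any given class once $i$ is large, so the equation can be read as a triangular system. I would prescribe $g$ freely, and nonzero, on the class $r=1$, that is, on $j\equiv 1 \pmod 4$, and then determine $g$ on deeper classes. If the bookkeeping shows that the free data propagate without forcing $g\equiv0$, with decay $O(2^{-r})$ so that $g(-1)=0$ and continuity hold, the construction is complete.

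If the triangular approach stalls, the fallback is to build $\phi$ as a uniform limit of $\phi_N\in V_N$ that solve (i) up to an error in $V_{N}$ of size $O(2^{-N})$. One would correct level by level and sum a geometrically convergent series of corrections. This has to be arranged so that the limit is nonzero; by the first paragraph, the limit then cannot be locally constant.
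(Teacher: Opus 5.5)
\textbf{There is a genuine gap: the proposal never produces a nonzero continuous solution, which is the whole content of the theorem.}

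Your preliminary reductions are correct. A continuous integer-valued solution is locally constant, so it lies in some $V_n$. Since $T(V_n)\subseteq V_{n-1}$ and $T$ acts on constants by $c\mapsto 2c$, it must vanish. So (ii) follows from $\phi\not\equiv 0$, which is a real simplification over the paper's numerical check that $1<\phi(0)<2$. The parametrization by $g=\phi|_{1+2\mathbf{Z}_2}$, the constraint $g(-1)=0$, and the reformulation of continuity at $0$ as $g=G+c$ are also right.

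Neither of your two strategies amounts to an argument.

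\emph{The ``triangular'' reading is false.} For odd $j$ one has $2^ij-1\in -1+2^i(1+2\mathbf{Z}_2)$, so its class is exactly $r=i$, independent of $j$. In particular, the term $g(2j-1)$ lies in class $1$ for every $j$. At a class-$1$ point the equation therefore reads $\sum_{i\ge 2}g(2^ij-1)=g(j)-g(2j-1)-c$. At the fixed point $j=1=2\cdot 1-1$ it becomes $\sum_{i\ge 2}g(2^i-1)=-c$. At a class-$r$ point the equation couples class $r$ to class $1$ and to all classes $2,3,\dots$ at once. So prescribing $g$ freely on class $1$ imposes infinitely many coupled constraints on the deeper classes, and there is no order in which they are determined successively. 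Whether these constraints can be met with $O(2^{-r})$ decay, without forcing $g\equiv 0$, is exactly what has to be proved.

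\emph{The fallback is equally unsupported.} $T$ has norm $2$ on $C(\mathbf{Z}_2)$. You give no mechanism that produces the approximate fixed points $\phi_N$, makes the corrections summable, or keeps the limit nonzero.

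The missing idea is that $T$ acts on characters as a weighted shift. With $\chi_q(k):=\chi(qk)$ for $q\in\mathbb{Q}_2$, one has $T\chi_q=(1+\overline{\chi(q)})\,\chi_{2q}$. Hence $f=\sum_{n\ge 0}a_n\chi_{2^{-(n+1)}}$ is a fixed point of $T$ provided $a_{n-1}=a_n(1+e^{-\pi i/2^n})$ for $n\ge 1$; the $n=0$ term is annihilated because $\chi(1/2)=-1$. This means $a_n=1/P_n$ with $P_n=\prod_{j=1}^{n}(1+e^{-\pi i/2^j})$. Since $|1+e^{-\pi i/2^j}|=2\cos(\pi/2^{j+1})$, Vi\`ete's product gives $|a_n|\le \frac{\pi}{2}2^{-n}$. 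So the series converges uniformly, and since $T$ is bounded it may be applied termwise. This is the paper's construction.

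Combined with your first paragraph, the real or imaginary part of $f$ (at least one is nonzero, and both are fixed by $T$) finishes the proof without any estimate of $\phi(0)$. This $f$ also lies in your weighted space: $|f(-1+2^ru)|\le 2\pi\,2^{-r}$ for odd $u$. So your reduction was not a dead end; it was simply not completed.
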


\begin{proof}
For all non-negative integer $n\ge 0$, let 
\begin{align*}
P_0:=1,\ P_n &\coloneq \prod_{j=0}^{n-1} \left(1+  e^{-\pi i/2^{j+1}}\right)\quad \text{for all }n\ge 1.
\end{align*}
Define $f:\mathbf{Z}_2 \to \mathbb{C}$ by
\begin{equation}
\label{eqn:funct_defn}
 f(k) = \sum_{n=0}^{\infty} \frac{1}{P_n} \chi(2^{-(n+1)} k) \quad\text{for all }k\in\z
\end{equation}
(refer to Section \ref{S:Pre} for the notation $\chi$).

To show that $f$ is well-defined and continuous, we first prove that $\sum\limits_{n=0}^\infty \frac{1}{\vert P_n\vert} <\infty$.
Using $|1 + e^{-\pi i/2^{j+1}}| = 2 \left|\cos\frac{\pi}{2^{j+2}}\right|$, we see that
\[
\frac{1}{|P_n|}
= \frac{1}{\prod_{j=0}^{n-1} |1 + e^{-\pi i/2^{j+1}}|}
= \frac{1}{2^n \prod_{j=0}^{n-1} \left|\cos \frac{\pi}{2^{j+2}}\right|}.
\]
Since $|\cos(\pi/2^{j+2})| \le 1$, we get the bound
\[
\frac{1}{|P_n|} \le \frac{1}{2^n} \prod_{j=0}^{\infty} \frac{1}{\left|\cos\frac{\pi}{2^{j+2}}\right|}.
\]
Using Viet\`e's formula
$\sin \theta = \theta \prod\limits_{r=0}^{\infty} \cos \frac{\theta}{2^{r+1}}$ with $\theta = \frac{\pi}{2}$, we obtain $1 = \frac{\pi}{2} \prod\limits_{r=0}^{\infty} \cos \frac{\pi}{2^{r+2}}$. Hence
\[
\prod_{r=0}^{\infty} \left|\cos \frac{\pi}{2^{r+2}}\right| = \frac{2}{\pi}\qquad \text{and}\qquad
\prod_{j=0}^{\infty} \frac{1}{\left|\cos(\pi/2^{j+2})\right|} = \frac{\pi}{2}.
\]
Thus
\[
\sum_{n=0}^{\infty} \frac{1}{|P_n|} \le \frac{\pi}{2}\sum_{n=0}^{\infty} \frac{1}{2^n} = \pi < \infty.
\]

Notice that $\vert \frac{1}{P_n} \chi(2^{-(n+1)} k)\vert \le \frac{1}{|P_n|}$ for all $k\in \z$. Clearly, every term in the series of \eqref{eqn:funct_defn} is continuous. So, by the Weierstrass M-test (see \cite[Theorem 7.10 \& 7.12]{Rudin76} for example), we get that $f$ is well-defined and continuous.

We now check the functional equation $f(k) = f(2k) + f(2k-1)$ for $k\in \mathbb{Z}$.  
For each $n \ge 1$, using 
the identity $e^{-\pi i/2^n} = \chi(-2^{-(n+1)})$, one has 
\[
\chi(2^{-n} k)\, e^{-\pi i/2^n} = \chi(2^{-n} k)\, \chi(-2^{-(n+1)}) 
=\chi(2^{-(n+1)}(2k-1)).
\]
Combining this equality with $P_n = P_{n-1} \bigl(1 + e^{-\pi i/2^n}\bigr)$ gives
\[
\chi(2^{-n} k) + \chi(2^{-(n+1)}(2k-1)) = \chi(2^{-n} k)\, (1 + e^{-\pi i/2^n}) = \chi(2^{-n} k)\, \frac{P_n}{P_{n-1}}.
\]
Therefore,
\[
\frac{\chi(2^{-n} k) + \chi(2^{-(n+1)}(2k-1))}{P_n} = \frac{\chi(2^{-n} k)}{P_{n-1}}, \qquad \forall\, n \ge 1.
\]
Since $\chi(k)=1$ and $\chi(\frac{2k-1}{2})=-1$ for all $k\in \mathbb{Z}$, we get
\begin{equation}\label{eqn:funct_eq1}
f(2k) + f(2k-1) = \sum_{n=1}^{\infty} \frac{\chi(2^{-n} k)}{P_{n-1}} = \sum_{n=0}^{\infty} \frac{\chi(2^{-(n+1)} k)}{P_n} = f(k)
\end{equation}
for all $k\in \mathbb{Z}$. Now using the density of $\mathbb{Z}$ in $\z$ and continuity of $f$, 
%and \eqref{eqn:funct_eq1}, we obtain $f(k)=f(2k) + f(2k-1)$ 
we obtain \eqref{eqn:funct_eq1} holds true for all $k\in\z$. Notice that the 
imaginary part $\Im(f)$ is a continuous function $\mathbf{Z}_2 \to \mathbb{R}$ 
 satisfying the same functional equation.

Let $\phi:=\Im(f)$, the imaginary part of $f$. We now show that $\phi(\mathbf{Z}_2)$ is not contained in $\mathbb{Z}$. For this, it suffices to show that $\Im(f(0))=\Im\left(\sum_{n=0}^{\infty}\frac{1}{P_n}\right)\notin\mathbb{Z}$. One can first check that
\[
P_n=\frac{2}{1-e^{-\pi i/2^n}}
\qquad \text{for all } n\geq 0,
\]
implying
\begin{equation}\label{E:exp1}
 \frac{1}{P_n}
=\frac{\left(1-e^{-i\pi/2^n}\right)}{2}.   
\end{equation}
Then using Euler's formula, we obtain
\[
\sum_{n=0}^{\infty}\frac{1}{P_n}
=
\sum_{n=0}^{\infty}
\sin^2\left(\frac{\pi}{2^{n+1}}\right)
+
i\sum_{n=1}^{\infty}
\frac{\sin(\pi/2^n)}{2}.
\]
Hence,
\[
\Im\left(\sum_{n=0}^{\infty}\frac{1}{P_n}\right)
=
\sum_{n=1}^{\infty}\frac{\sin(\pi/2^n)}{2}.
\]
Observe that 
\begin{align*}
1&<\frac{\sin(\pi/2)}{2}
+\frac{\sin(\pi/4)}{2}
+\frac{\sin(\pi/8)}{2}
+\frac{\sin(\pi/16)}{2}  \\  
&<\sum_{n=1}^{\infty}\frac{\sin(\pi/2^n)}{2}
<\sum_{n=1}^{\infty}\frac{\pi}{2^{n+1}}
=
\frac{\pi}{2}
<2,
\end{align*}
where the third ``$<$" holds as $\sin(x)<x$ for $x>0$. 

So,
\[
1<
\Im\left(\sum_{n=0}^{\infty}\frac{1}{P_n}\right)
<2.
\]
It clearly follows that $\Im(f(0))\notin\mathbb{Z}$.
\end{proof}

We are now in a position to construct an automorphism of \( \Q \) with 
desired properties.

\begin{theo}
\label{T:lambdad}
There exists a unitary \( d \in \D \) such that the corresponding automorphism \( \lambda_d \in \operatorname{Aut}(\C) \) satisfies:
\begin{enumerate}
\item \( \lambda_d \) extends to an automorphism of \( \Q \);
\item \( \lambda_d(x) = x \) for all \( x \in C^*(\D, s_2) \);
\item \( \lambda_d \) does not lie in the subgroup of \( \operatorname{Aut}(\C) \) generated by the flip-flop \( \sigma \), 
the gauge automorphisms \( \alpha_t \), and inner automorphisms.
\end{enumerate}
\end{theo}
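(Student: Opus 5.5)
The plan is to build $d$ from the function $\phi$ of Theorem~\ref{T:cont_fn}. Identify $\D\cong C(\z)$. Put
\[
\theta:=e^{2\pi i\phi}\in C(\z,\mathbb T),\qquad d:=\theta^{*}\cdot\bigl(\text{suitable translate of }\theta\bigr),
\]
so that $d$ is a multiplicative coboundary-type cocycle. The functional equation $\phi(k)=\phi(2k)+\phi(2k-1)$ is what should make $\lambda_d$ both extendable and automorphic.

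\textbf{Step 1: extendability.} I would look for an extension of the form $\lambda_{(U,d)}$ with $U=au$ for some unitary $a\in\D$, probably $a=\theta$ up to a translate. By Theorem~\ref{T:endZ}, it suffices to check $U\,d=d\,W_U$, that is,
\[
U=S_1S_2^*+S_2US_1^*,\qquad S_i=ds_i .
\]
Using $s_2^*fs_2=f(2\,\cdot)$ and $s_1^*fs_1=f(2\,\cdot+1)$ for $f\in\D$, together with $ufu^*=f(\cdot-1)$, this reduces to multiplicative identities in $C(\z,\mathbb T)$. Matching them should give exactly $e^{2\pi i(\phi(2k)+\phi(2k-1))}=e^{2\pi i\phi(k)}$.

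\textbf{Step 2: automorphism and item (ii).} Item (ii) should follow by choosing $d$ so that $ds_2=s_2$ and $d$ commutes with $\D$, which it does because it is diagonal. Hence $d$ must be supported as $1$ on $s_2s_2^*$, i.e.\ $d$ is trivial on $2\z$. I would normalize $\phi$ accordingly, possibly replacing $d$ by $d\,\mathrm{Ad}$-conjugates. For $\lambda_d$ to be an automorphism, I would exhibit an inverse diagonal automorphism $\lambda_{d'}$, with $d'$ built from $-\phi$ (diagonal automorphisms compose by composing cocycles). Alternatively, Theorem~\ref{T:auteu}(i) lets me check bijectivity on either side once extendability is known.

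\textbf{Step 3, the main obstacle: item (iii).} I would use an invariant. Every inner automorphism, gauge $\alpha_t$, and $\sigma$ acts on $K$-theory and on some finer invariant, and composites of them that are diagonal automorphisms fixing $s_2$ are heavily constrained. My first attempt would use the fact that automorphisms in the generated subgroup which preserve $\D$ are, up to $\sigma$, of the form $\mathrm{Ad}(w)\circ\alpha_t$ with $w$ in the normalizer of $\D$. I would then compare the restriction of $\lambda_d$ to $\D$, which is the identity, forcing $w$ to lie in $\D$ and $\sigma$ to be absent. That gives $ds_i=t\,w s_i w^*$, so
\[
d=t\,w\,\varphi(w^*).
\]
Evaluating this cocycle equation, with $s_2$ fixed, yields $w(2k)=w(k)$ and $d(2k-1)\,w(k)=t\,w(2k-1)$. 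Taking logarithms, this forces the phase $\phi$ to be cohomologous to an integer-valued function plus a constant. Finally, evaluating at the fixed point $0$ (where $2\cdot 0=0$), combined with the summation identity $\phi(k)=\phi(2k)+\phi(2k-1)$ at $k=0$, should force $\phi(0)\in\mathbb Z$. This contradicts Theorem~\ref{T:cont_fn}(ii). The delicate point is justifying the passage from the abstract subgroup to the form $\mathrm{Ad}(w)\alpha_t$ with $w\in\D$, and making the continuous-logarithm argument rigorous on $\z$.
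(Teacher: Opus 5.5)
\textbf{The construction of $d$ is missing.} Your ansatz $d=\theta^*\cdot(\text{translate of }\theta)$ is never pinned down, and it has the wrong shape for (ii). ``Normalizing $\phi$'' is not an option either, since $\phi$ is the specific function of Theorem~\ref{T:cont_fn}.

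The computation you outline in Step~1 in fact determines $d$. Take $U=au$ with $a\in\mathcal{U}(\D)$ and evaluate $d^*aud=s_1s_2^*+s_2aus_1^*$ on $e_{j-1}$. It reads $a(j)=d(j)\overline{d(j-1)}$ for $j$ odd, and $a(j)\,d(j-1)\overline{d(j)}=a(j/2)$ for $j$ even. Item (ii) forces $d|_{2\z}=1$, so these equations give $d=a$ on the odd integers and $a(k)=a(2k)a(2k-1)$.

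So take $a=\theta$ and $d=s_2s_2^*+\theta s_1s_1^*$, i.e.\ $\theta$ cut down to the odd part; this is the paper's choice. The extension is $u\mapsto\theta u$, (i) and (ii) are then immediate, and $\lambda_d^{-1}=\lambda_{d^*}$. The ``coboundary'' intuition points the wrong way: $d=w\varphi(w)^*$ with $w\in\mathcal{U}(\D)$ is exactly the Cuntz--Takesaki unitary of the inner automorphism $\operatorname{Ad}(w)$.

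\textbf{In (iii), the exclusion of $\sigma$ is unproved.} The reduction to $\operatorname{Ad}(w)\circ\alpha_t\circ\sigma^{\epsilon}$ with $w$ normalizing $\D$ is fine. But ``comparing restrictions to $\D$ forces $\sigma$ to be absent'' is the real content of (iii): you must show $\sigma|_{\D}\neq\operatorname{Ad}(w)|_{\D}$ for every $w\in\mathcal{U}(\C)$.

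The paper does this as follows. It writes $w=qP$ with $q\in\mathcal{U}(\D)$ and $P$ in the Thompson group $V_2$ \cite[Theorem 4.8]{ACR20}. Comparing Cuntz--Takesaki unitaries, with $F=s_1s_2^*+s_2s_1^*$, gives $PF\varphi(P)^*=\bar t\,q^*d\varphi(q)\in\D\cap V_2=\{1\}$. Hence $\sigma=\operatorname{Ad}(P^*)$ would be inner, which is false. An alternative argument: a unitary normalizer moves each point of the spectrum $\z$ of $\D$ within its orbit for the Cuntz groupoid. But $\sigma|_{\D}$ sends $0$ to $-1$, and their binary expansions $00\cdots$ and $11\cdots$ are not tail equivalent.

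Your $\sigma$-free branch, by contrast, can be completed, and more elementarily than the paper's appeal to \cite[Corollary B]{MT93}, though not by logarithms.
\begin{itemize}
\item Since $\varphi(w)(2k)=\varphi(w)(2k+1)=w(k)$, the relation $d=t\,w\varphi(w)^*$ gives $1=d(2k)=t\,w(2k)\overline{w(k)}$ and $d(2k+1)=t\,w(2k+1)\overline{w(k)}$. Your odd relation has an index shift.
\item Setting $k=0$ gives $t=1$. Then $w(k)=w(2^nk)\to w(0)$, so $w$ is scalar and $d=1$, i.e.\ $\theta=1$ on the odd integers.
\item Finally $\theta(k)=\theta(2k)\theta(2k-1)=\theta(2k)$ forces $\theta\equiv\theta(-1)=1$, contradicting $\phi(\z)\not\subseteq\mathbb{Z}$.
\end{itemize}
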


\begin{proof}
Let \( \phi: \mathbf{Z}_2 \to \mathbb{R} \) be the continuous function from Theorem \ref{T:cont_fn}. Define \( \tilde{d}: \mathbf{Z}_2 \to \mathbb{T} \) by
\begin{align}
\label{E:tilded}
\tilde{d}(k) = e^{2\pi i \phi(k)}\quad \text{for all}\quad k\in \mathbf{Z}_2.
\end{align}
Then \( \tilde{d} \) is continuous and satisfies
\begin{equation}\label{E:funct_eqn1}
    \tilde{d}(k) = \tilde{d}(2k)\tilde{d}(2k-1) \quad \text{for all}\quad k \in \mathbb{Z}.
\end{equation}
Identify \( \tilde{d} \) with a unitary in \( \D \) and set
\[
d = s_2 s_2^* + \tilde{d}\, s_1 s_1^*.
\]
Note that $\lambda_d \in \operatorname{Aut}(\C)$ since $d\in \D$. In the canonical representation of \( \Q \), one has 
\[
(s_1 s_2^* + s_2 \tilde{d} u s_1^*) e_k =
\begin{cases}
e_{k+1} & \text{if } k \text{ is even}, \\
\tilde{d}\!\left(\frac{k+1}{2}\right) e_{k+1} & \text{if } k \text{ is odd},
\end{cases}
\]
and
\[
(d^* \tilde{d} u d) e_k =
\begin{cases}
e_{k+1} & \text{if } k \text{ is even}, \\
\tilde{d}(k)\tilde{d}(k+1)\, e_{k+1} & \text{if } k \text{ is odd}.
\end{cases}
\]
Using the relations satisfied by \( \tilde{d} \), we see that  these expressions agree on the standard basis \( \{e_k : k \in \mathbb{Z}\} \) of $\ell^2(\mathbb Z)$. Hence
\[
d^* \tilde{d} u d = s_1 s_2^* + s_2 \tilde{d} u s_1^*.
\]
By Theorem \ref{T:endZ}, it follows that \( \lambda_d \) extends to an automorphism $\tilde{\lambda}_d$ of \( \Q \) with $\tilde{\lambda}_d (u)=\tilde{d}u$.

Since \( d \in \D \), we have \( \lambda_d(x) = x \) for all \( x \in \D \). Moreover,
\[
\lambda_d(s_2) = d s_2 = s_2,
\]
so \( \lambda_d(x) = x \) for all \( x \in C^*(\D, s_2) \).

Next, we show that \( \lambda_d \) does not belong to the subgroup of \( \operatorname{Aut}(\C) \) generated by the flip-flop \( \sigma \), 
the gauge automorphisms \( \alpha_t \), and  inner automorphisms.

Note that \( \lambda_d \) is outer by \cite[Corollary B]{MT93}, as it fixes \( s_2 \). Moreover, it is neither \( \sigma \) nor  \( \alpha_t \) for \( t \ne 1 \), since $\alpha_t$ ($t\ne 1$) does not fix \( s_2 \). Also, for any unitary \( w \in \mathcal{U}(\C) \) and $t\in \mathbb T$ one has 
$\sigma\circ \alpha_t=\alpha_t \circ \sigma$, and 
\begin{align*}
\sigma \circ \operatorname{Ad}(w) = \operatorname{Ad}(\sigma(w)) \circ \sigma,\quad
\alpha_t \circ \operatorname{Ad}(w) = \operatorname{Ad}(\alpha_t(w)) \circ \alpha_t.
\end{align*}
Thus it suffices to show that \( \lambda_d \) cannot be written in the form
\begin{align}
\label{E:no}
\lambda_d = \operatorname{Ad}(w) \circ \alpha_t 
\qquad \text{or} \qquad 
\lambda_d = \operatorname{Ad}(w) \circ \alpha_t \circ \sigma
\end{align}
for any \( t \in \mathbb{T} \) and \( w \in \mathcal{U}(\C) \).

To the contrary, first assume that \( \lambda_d = \operatorname{Ad}(w) \circ \alpha_t \). For \( x \in \D \), we have
\[
w x w^* = x.
\]
So \( w \in \D \) as \( \D \) is maximal abelian. Then
\[
t\, w s_2 w^*=\operatorname{Ad}(w) \circ \alpha_t (s_2)  =\lambda_d(s_2)=s_2.
\]
Evaluating on the basis vector \( e_0 \) and using the fact that \( w \in \D \), we obtain \( t = 1 \). Hence \( \lambda_d = \operatorname{Ad}(w) \) is inner, a contradiction.

Next suppose that \( \lambda_d = \operatorname{Ad}(w) \circ \alpha_t \circ \sigma \). Recall that if \( x \in \D \), then $\alpha_t(x)=x$ and $\sigma(x)\in\D$. So $\lambda_d(\sigma(x))=\sigma(x)$ by (ii) proved above. Then, for all $x\in 
\D$, we have 
\[
w x w^*= \operatorname{Ad}(w) \circ \alpha_t \circ \sigma(\sigma(x)) 
= \lambda_d(\sigma(x)) =\sigma(x).
\]
Thus \( w \) normalizes \( \D \).
By  \cite[Theorem 4.8]{ACR20}, we can write \( w = q P \), where \( q \in \D \) is unitary and \( P \in V_2\), the Thompson group in  $\mathcal{U}(\C)$.

Notice that simple calculations yield
\[
\operatorname{Ad}(w) \circ \alpha_t \circ \sigma 
= \lambda_{t\, w (s_1 s_2^* + s_2 s_1^*) \varphi(w)^*},
\]
%(also refer to Theorem \ref{T:endZ}),
where $\varphi$ is the canonical shift on $\C$ (refer to Section \ref{S:Pre}).
Hence it follows from
$\lambda_d 
= \operatorname{Ad}(w) \circ \alpha_t \circ \sigma$
that 
\[
d = t\, w (s_1 s_2^* + s_2 s_1^*) \varphi(w)^*
\]
by the Cuntz-Takesaki correspondence.
Thus 
\[
d = t\, qP (s_1 s_2^* + s_2 s_1^*) \varphi(qP)^*\implies
P (s_1 s_2^* + s_2 s_1^*) \varphi(P)^* 
= \overline{t}\, q^* d \varphi(q) \in \D.
\]
Clearly, one also has \( P (s_1 s_2^* + s_2 s_1^*) \varphi(P)^* \in V_2 \). 
Hence we must have
\[
P (s_1 s_2^* + s_2 s_1^*) \varphi(P)^* = 1,
\]
which implies 
\[
s_1 s_2^* + s_2 s_1^* = P^* \varphi(P).
\]
Then the 1-1 correspondence (Theorem \ref{T:endZ}) yields
\[
\sigma = \lambda_{s_1 s_2^* + s_2 s_1^*}=\operatorname{Ad}(P^*),
\]
implying that $\sigma$ would be inner. This is impossible as it is well-known that $\sigma$ is outer.

Thus far, we have known no \( t \in \mathbb{T} \) and \( w \in \mathcal{U}(\C) \) satisfy either of the identities in \eqref{E:no}. Therefore \( \lambda_d \) does not lie in the subgroup generated by \( \sigma \), the gauge automorphisms, and inner automorphisms.
\end{proof}

In \cite{ACR18}, Aiello, Conti, and Rossi showed that the outer automorphism group $\operatorname{Out}(\Q)$ of $\Q$ is non-abelian. As a consequence of Theorem~\ref{T:lambdad}, in what follows, we show more: The canonical image of $\operatorname{Aut}(\Q, \C)$ in $\operatorname{Out}(\Q)$ is also non-abelian. This yields a positive answer to \cite[Question 9.25]{ACR21}.

\begin{theo}
\label{T:AutQOnonabe}
The canonical image of $\operatorname{Aut}(\Q, \C)$ in $\operatorname{Out}(\Q)$ is non-abelian.
\end{theo}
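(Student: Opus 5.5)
We will exhibit two elements of $\operatorname{Aut}(\Q,\C)$ whose commutator is not inner in $\Q$. The natural candidates are the extension $\tilde\lambda_d$ from Theorem~\ref{T:lambdad}, which satisfies $\tilde\lambda_d(u)=\tilde d u$, $\tilde\lambda_d(s_1)=ds_1$, $\tilde\lambda_d(s_2)=s_2$, and the flip-flop $\sigma$. Both preserve $\C$: $\sigma$ does by definition, and $\tilde\lambda_d$ does because it restricts to $\lambda_d\in\operatorname{Aut}(\C)$ and $\tilde\lambda_d(\C)=\lambda_d(\C)=\C$. It then suffices to show that $\gamma:=\sigma\circ\tilde\lambda_d\circ\sigma^{-1}\circ\tilde\lambda_d^{-1}$ is not of the form $\operatorname{Ad}(z)$ for any $z\in\U(\Q)$. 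If that fails, we would try $\alpha_t$ in place of $\sigma$ as a backup.

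To compute $\gamma$ on $\C$, note that $\sigma\lambda_d\sigma^{-1}=\lambda_{\sigma(d)}$, and $\sigma(d)=s_1s_1^*+\sigma(\tilde d)s_2s_2^*\in\D$. Hence $\gamma|_{\C}$ is a diagonal automorphism $\lambda_e$ with $e\in\D$ obtained explicitly via the Cuntz--Takesaki correspondence. Moreover, $\gamma$ fixes $\D$ pointwise, since both $\lambda_d$ and $\lambda_{\sigma(d)}$ do.

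Now suppose $\gamma=\operatorname{Ad}(z)$ with $z\in\U(\Q)$. Because $\gamma$ fixes $\D$ pointwise and $\D$ is a MASA in $\Q$ (it is Cartan), we get $z\in\D\subseteq\C$. Thus $\gamma|_{\C}=\lambda_{\sigma(d)}\lambda_d^{-1}$ would be inner in $\C$, which gives $\lambda_{\sigma(d)}=\operatorname{Ad}(z)\circ\lambda_d$ in $\operatorname{Aut}(\C)$. Applying both sides to $s_1$ and $s_2$, and using $\lambda_d(s_2)=s_2$, $\lambda_{\sigma(d)}(s_1)=s_1$, we obtain the identities
\[
\sigma(\tilde d)s_2 = z s_2 z^*,\qquad s_1 = z\,\tilde d s_1 z^*.
\]
These give $\tilde d = z^*\varphi(z)$ restricted to the $s_1$ range, and similarly $\sigma(\tilde d)$ on the $s_2$ range. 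Equivalently, $d=\lambda$-cocycle relations hold, forcing $\lambda_d\circ\lambda_{\sigma(d)}$ to be close to $\operatorname{Ad}(z)$.

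The main obstacle is turning these coboundary relations into a contradiction. Our plan is to multiply them to get $\lambda_{d\sigma(d)}$-type identities, and then recognize a composition $\lambda_d\circ\sigma\circ\lambda_d\circ\sigma$ (up to inner factors) that equals a product that Theorem~\ref{T:lambdad}(iii) forbids. Concretely, $\sigma\lambda_d\sigma = \operatorname{Ad}(z)\lambda_d$ would give $\lambda_d\sigma=\operatorname{Ad}(\sigma(z))^{\pm}\sigma\lambda_d$. We would then evaluate on $s_2$ and at the basis vector $e_0$ in the canonical representation, mimicking the argument used for $\operatorname{Ad}(w)\circ\alpha_t$ in the proof of Theorem~\ref{T:lambdad}, and reduce to $\phi$ being integer-valued up to a coboundary. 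If the coboundary equation $\tilde d(k)=\overline{z}(k)z(\cdot)$ can be solved by continuous $z$, we would instead use a winding or degree argument, exploiting $\phi(0)\notin\bZ$ from Theorem~\ref{T:cont_fn}, to derive the final contradiction. This last step is where we expect the real work to lie.
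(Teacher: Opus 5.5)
\textbf{There is a genuine gap: you set the problem up correctly but never reach a contradiction, and the endings you sketch cannot work.}

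\textbf{What is correct.} Your setup runs parallel to the paper's proof:
\begin{itemize}
\item you use the same pair $\tilde\lambda_d,\sigma$;
\item deriving $z\in\D$ from $\gamma|_{\D}=\id$ and maximality of $\D$ in $\Q$ is a valid shortcut;
\item your two identities amount to $\tilde d^*s_1s_1^*+\sigma(\tilde d)s_2s_2^*=z\varphi(z)^*$, which is exactly the paper's intermediate relation $W=w\varphi(w)^*$.
\end{itemize}
The substantive work starts at that point, and you do not supply it.

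\textbf{Why the suggested endings fail.}
\begin{itemize}
\item Theorem~\ref{T:lambdad}(iii) excludes $\lambda_d\in\langle\sigma,\alpha_t,\operatorname{Inn}(\C)\rangle$. Your hypothesis only says that $\lambda_d$ commutes with $\sigma$ modulo inner automorphisms. That does not place $\lambda_d$ in the subgroup, so (iii) gives no contradiction.
\item A winding or degree argument has nothing to act on over the totally disconnected space $\z$.
\item $\phi(0)\notin\bZ$ is the wrong invariant. In fact, no argument using only the stated conclusions of Theorems~\ref{T:cont_fn} and~\ref{T:lambdad} can succeed. Take $\phi':=\operatorname{Re}\bigl(f(\cdot)-f(1-\cdot)\bigr)$. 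It is also a continuous solution of $\phi'(k)=\phi'(2k)+\phi'(2k-1)$, and $\phi'(0)=\sum_{n\ge 0}\bigl(1-\cos(\pi/2^n)\bigr)\in(3,4)$. Hence Theorem~\ref{T:lambdad}(i)--(iii) hold for the resulting $d'$. Yet $\phi'(1-k)=-\phi'(k)$, and one checks on $u,s_1,s_2$ that $\sigma\circ\tilde\lambda_{d'}\circ\sigma=\operatorname{Ad}(u^*\tilde d'u)\circ\tilde\lambda_{d'}$. So $[\sigma]$ and $[\tilde\lambda_{d'}]$ commute in the outer automorphism group.
\item Your backup with $\alpha_t$ fails outright. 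Since $d$ and $\tilde d$ are gauge invariant, $\alpha_t\circ\tilde\lambda_d=\tilde\lambda_d\circ\alpha_t$ exactly.
\end{itemize}

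\textbf{The missing argument, as in the paper.}
\begin{enumerate}
\item Pin down $z$. Applying $\tilde\lambda_d$ to $s_2u=u^2s_2$ gives $s_2\tilde du=(\tilde du)^2s_2$. From this, $u^*\tilde du\in\D$ satisfies the same identity $s_1=(\cdot)\,\tilde ds_1(\cdot)^*$ as $z$. Hence $(u^*\tilde du)z^*\in C^*(s_1)'\cap\Q=\mathbb{C}1$, so $z=\mu u^*\tilde du$, i.e.\ $z(k)=\mu\tilde d(k+1)$. The paper reaches this via $C^*(s_2)'\cap\Q=\mathbb{C}1$ instead.
\item Use the symmetry $\sigma(e)=e^*$ of $e=\tilde d^*s_1s_1^*+\sigma(\tilde d)s_2s_2^*$. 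Together with $\sigma\circ\varphi=\varphi\circ\sigma$ and commutativity of $\D$, this gives $\varphi(z\sigma(z))=z\sigma(z)$. Therefore $z\sigma(z)\in\C'\cap\Q=\mathbb{C}1$; write $z\sigma(z)=\delta1$.
\item Read this in the canonical representation: $\mu^2\tilde d(k+1)\tilde d(-k)=\delta$ for all $k$. The functional equation forces $\tilde d(-1)=\tilde d(2)=1$. Taking $k=1$ gives $\mu^2=\delta$, and then $k=0$ gives $\tilde d(0)\tilde d(1)=1$, i.e.\ $\phi(0)+\phi(1)\in\bZ$.
\item Finish with a computation specific to the paper's $f$, not contained in Theorem~\ref{T:cont_fn}: $f(0)+f(1)=i\sum_{n\ge1}\sin(\pi/2^n)$, whose imaginary part lies strictly between $2$ and $3$.
\end{enumerate}
This contradiction, which concerns $\phi(0)+\phi(1)$ rather than $\phi(0)$, is what proves the theorem.
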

\begin{proof}
Let $\tilde{\lambda}_d$ be the unique extension of $\lambda_d$ constructed in Theorem~\ref{T:lambdad}. First notice that both $\tilde{\lambda}_d$ and $\sigma$ are outer automorphisms. So, to prove the theorem, it suffices to show that there is no unitary $w\in \Q$ such that
\(
\sigma\circ\tilde{\lambda}_d
=
\operatorname{Ad}(w)\circ\tilde{\lambda}_d\circ\sigma.
\)

Suppose such a unitary $w$ exists by contradiction. Then
\begin{equation}\label{E:non_a1}
s_1
=\sigma\circ\tilde{\lambda}_d(s_2)
=\operatorname{Ad}(w)\circ\tilde{\lambda}_d\circ\sigma(s_2)
=wds_1w^*.
\end{equation}
Since $\lambda_d$ is extendable, we have
\[
s_2\tilde{d}u=\tilde{d}u\,ds_1,
\]
which gives
\[
ds_1=(\tilde{d}u)^*s_2(\tilde{d}u).
\]
Combining this with $s_1=u^*s_2u$, equation~\eqref{E:non_a1} implies
\[
s_2uw(\tilde{d}u)^*
=
uw(\tilde{d}u)^*s_2.
\]
By \cite[Theorem~3.20]{ACR18}, we obtain
\(
uw(\tilde{d}u)^*=\mu1
\)
for some $\mu\in\mathbb{T}$. Therefore,
\[\
w=\mu u^*\tilde{d}u.
\]
In particular, one has $w\in \D$.

%%%%%%%%%%%
Notice that $\sigma\circ\tilde{\lambda}_d\circ\sigma\circ\tilde{\lambda}_{d^*}(\C)=\C$, and that its restriction to $\C$ is $\sigma\circ\lambda_d\circ\sigma\circ\lambda_{d^*}$. On the one hand, by \eqref{E:non_a1}, we have
\[
\sigma\circ\lambda_d\circ\sigma\circ\lambda_{d^*}
=
\operatorname{Ad}(w).
\]
On the other hand,
\[
\sigma\circ\lambda_d\circ\sigma\circ\lambda_{d^*}
=
\lambda_W,
\]
where
\[
W=\sigma(d)s_2s_2^*+d^*s_1s_1^*.
\]
Indeed,
\[
\begin{aligned}
W
&=
\sigma\circ\lambda_d\circ\sigma\circ\lambda_{d^*}(s_2)s_2^*
+
\sigma\circ\lambda_d\circ\sigma\circ\lambda_{d^*}(s_1)s_1^*\\
&=
\sigma(d)s_2s_2^*+d^*s_1s_1^*.
\end{aligned}
\]
Thus
$\lambda_W = \operatorname{Ad}(w)$ implies 
$\lambda_W=\lambda_{w\varphi(w)^*}$. So
\[
W=\sigma(d)s_2s_2^*+d^*s_1s_1^*=w\varphi(w)^*.
\]
It is clear that $\sigma(W)=W^*$.
Hence, combining this with $w, \varphi(w)\in \D$, we obtain 
\[
w^*\varphi(w)
=
W^*
=
\sigma(W)
=
\sigma(w)\varphi(\sigma(w)^*).
\]
It follows that
\[
\varphi(w\sigma(w))=w\sigma(w).
\]
By \cite[Theorem~3.20]{ACR18}, we obtain
\(
w\sigma(w)=\delta1
\)
for some $\delta\in\mathbb{T}$. Consequently,
\begin{equation}\label{E:non_a2}
\mu^2 u^*\tilde{d}u^2\sigma(\tilde{d})u^*
=
w\sigma(w)
=
\delta1.
\end{equation}
Observe that $\sigma(\tilde{d})\in\mathcal{D}_2$. In the canonical representation of $\Q$, one has
\[
\sigma(\tilde{d})e_k=\tilde{d}(-1-k)e_k \quad \text{for all }k\in \mathbb Z.
\]
Applying this to  \eqref{E:non_a2}, we get
\begin{equation}\label{E:non_a3}
    \mu^2 \tilde{d}(k+1) \tilde{d}(-k) = \delta   \quad \text{for all } k\in \mathbb{Z}.
\end{equation}
From \eqref{E:funct_eqn1}, one can easily see that $\tilde{d}(-1)=\tilde{d}(2)=1$. Then letting $k=1$ in \eqref{E:non_a3} gives $\mu^2=\delta$. This implies $\tilde{d}(k+1)\tilde{d}(-k)=1$. In particular, $\tilde{d}(0)\tilde{d}(1)=1$. Recall from \eqref{E:tilded} that
\[
\tilde{d}(k)=e^{2\pi i\phi(k)}\quad \text{for all } k\in\mathbf{Z}_2,
\]
where $\phi:\mathbf{Z}_2\to\mathbb{R}$ is the continuous function from Theorem~\ref{T:cont_fn}. Thus, we must have $\phi(0)+\phi(1)\in\mathbb{Z}$. 

To show this is impossible, by the definition of $\phi$ given in the proof of Theorem~\ref{T:cont_fn}, it suffices to prove that the imaginary part of $f(0)+f(1)$ is not an integer, where $f$ is as in \eqref{eqn:funct_defn}. From \eqref{E:exp1}, we obtain
\begin{align*}
f(0)+f(1)
&=\sum_{n=0}^{\infty}\frac{1+e^{i\pi/2^n}}{P_n} =\sum_{n=1}^{\infty}\frac{(1+e^{i\pi/2^n})(1-e^{-i\pi/2^n})}{2}\\
&=i\sum_{n=1}^{\infty}\sin(\pi/2^n).
\end{align*}
As before, we have the following estimations
\begin{align*}
2&<
\sin(\pi/2)+\sin(\pi/4)+\sin(\pi/8)\\
&<\sum_{n=1}^{\infty}\sin(\pi/2^n)
\le 1+\sum_{n=2}^{\infty}\frac{\pi}{2^n}
=1+\frac{\pi}{2}<3.
\end{align*}
Hence, $2<\Im(f(0)+f(1))<3$. So $\phi(0)+\phi(1)\notin\mathbb{Z}$, a contradiction. 
Therefore no such unitary $w$ exists.
\end{proof}

%%%%%%%%%%Added on April 16
\section{Automorphisms of $\Q$ preserving $C^*(u)$ and the related Weyl group $\mathcal{W}(\Q, C^*(u))$}
\label{S:Weyl}

In this section, we first show that $C^*(u)$ is a Cartan subalgebra of $\Q$. This naturally associates $\Q$ a Deaconu--Renault groupoid $\Gamma(\mathbb{T},\gamma)$, which provides additional tools for studying $\Q$. We then completely describe $\operatorname{Aut}(\Q,C^*(u))$, the group of automorphisms of $\Q$ that preserve $C^*(u)$, and obtain a full description of the corresponding Weyl group $\mathcal{W}(\Q,C^*(u))$.
%%%%%%
Along the way, we also characterize the group of inner automorphisms preserving $C^*(u)$, the group of \textit{unitary} normalizers of $C^*(u)$ in $\Q$, and the topological full group of $\Gamma(\mathbb{T},\gamma)$.

\subsection{$C^*(u)$ is Cartan in $\Q$}
\label{SS:uCartan}
In this short subsection, making use of the various realizations of $\Q$ appearing in the literature, we show that $C^*(u)$ is not merely a MASA (\cite{ACR18}), but is in fact a Cartan subalgebra of $\Q$ in the sense of \cite[Definition 5.1]{Ren08}. This fact plays a vital role throughout this section.

Unlike the notion of regularity used by Aiello, Conti, and Rossi in \cite[Proposition 3.1]{ACR20}, where normalizers are required to be partial isometries, no such restriction is imposed here.
\begin{prop}\label{prop:cartan2}
The $C^*$-subalgebra $C^*(u)$ is Cartan in $\Q$.
\end{prop}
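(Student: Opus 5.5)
We have to verify Renault's three conditions: $C^*(u)$ is a MASA containing the unit, it is regular (its normalizers generate $\Q$), and there is a faithful conditional expectation $\Q\to C^*(u)$. Maximal abelianness is already known from \cite{ACR18}, and $C^*(u)$ obviously contains $1$. So the work is in regularity and in constructing the expectation.

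\emph{Regularity.} Since $\Q=C^*(u,s_2)$, it suffices to check that $u$ and $s_2$ normalize $C^*(u)$. For $u$ this is immediate. For $s_2$, relation (I) gives $s_2 u^n = u^{2n}s_2$ for $n\ge 0$, and taking adjoints handles negative powers. Hence $s_2 f(u) = f(u^2)s_2$ for every $f\in C(\mathbb T)$, and so
\[
s_2 f(u) s_2^* = f(u^2)s_2s_2^*.
\]
The term $s_2s_2^*$ lies outside $C^*(u)$, so this computation alone does not close the argument. Instead I would use the canonical representation, in which $s_2s_2^*$ is the projection onto $\overline{\operatorname{span}}\{e_{2n}\}$. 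Writing $e_{2n}$ via Fourier analysis on $\ell^2(\bZ)\cong L^2(\mathbb T)$, where $u$ acts as multiplication by $z$, one finds
\[
s_2 s_2^* = \tfrac12(1+\Theta),
\]
where $\Theta e_k=(-1)^k e_k$. The operator $\Theta$ is the translation $g(z)\mapsto g(-z)$, which is not multiplication by a function of $z$. So $s_2$ is not literally a normalizer in the algebraic sense, and I need a different route.

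The cleaner approach is groupoid-theoretic, using the semigroup crossed product realization $\Q\cong C(\mathbf Z_2)\rtimes(\bZ\rtimes[2\rangle)$, or better a dual picture. The key map is $\gamma(z)=z^2$ on $\mathbb T$, a local homeomorphism. From
\[
s_2^* f(u) s_2 = \tfrac12\sum_{w^2=z} f(w) = (\mathcal L f)(u),
\]
I would identify $\Q$ with the Cuntz--Pimsner/Exel crossed product $C(\mathbb T)\rtimes_{\gamma,\mathcal L}\mathbb N$, which by Exel--Vershik is known to be $\Q$. Concretely, I would build a surjection from $\Q$ onto $C^*(\Gamma(\mathbb T,\gamma))$ via the universal property, sending $u\mapsto z\in C(\mathbb T)\subseteq C^*(\Gamma)$ and $s_2\mapsto \tfrac1{\sqrt2}\mathbf 1_{\{(z,1,z^2)\}}$. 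I would then check relations (I) and (II) at the level of groupoid functions, get injectivity from simplicity of $\Q$, and get surjectivity because the images generate.

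Once this identification is in place, $\Gamma(\mathbb T,\gamma)$ is a topologically principal (indeed essentially principal, since periodic points of $z\mapsto z^2$ are countable and the orbit structure is aperiodic off a null meagre set), amenable, Hausdorff étale groupoid. Renault's theorem \cite[Theorem 5.2/Proposition 5.11]{Ren08} then gives that $C(\mathbb T)=C^*(u)$ is Cartan. The expectation is restriction to the unit space, and regularity comes from bisections.

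\emph{Main obstacle.} I expect the hardest step to be the rigorous identification $\Q\cong C^*(\Gamma(\mathbb T,\gamma))$ with $u\mapsto z$. An alternative is to construct the faithful conditional expectation directly, as
\[
E(x)=\lim_{n}\frac1{2^n}\sum_{\zeta^{2^n}=1}\operatorname{Ad}(U_\zeta)(x)
\]
for suitable unitaries $U_\zeta$ implementing rotations. This mirrors how the notation $U_\xi$, $\xi\in\mathbb Z(2^\infty)$, appears later in the paper. If the groupoid route is too heavy, I would fall back on this averaging construction. Its key difficulty is proving faithfulness, which I would handle through the canonical representation.
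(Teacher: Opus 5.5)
Your main route is correct and is essentially the paper's proof. The paper also realizes $\Q$ as $C^*(\Gamma(\mathbb{T},\gamma))$ with $C^*(u)$ corresponding to $C(\mathbb{T})$, then invokes topological freeness of $z\mapsto z^2$ and Renault's theorem; the only difference is that it gets the identification by citing Exel--Vershik together with Larsen--Li, whereas you verify it directly (your assignment $u\mapsto z$, $s_2\mapsto 2^{-1/2}\mathbf{1}_{\{(z,1,z^2)\}}$ does satisfy (I) and (II)).

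Drop the fallback, however. $\operatorname{Ad}(U_\zeta)$ sends $u$ to $\zeta u$, so those averages do not fix $C^*(u)$: they send $u$ to $0$, and the fixed-point algebra of the $\operatorname{Ad}(U_\zeta)$ is $\D$, not $C^*(u)$. A correct direct construction first applies the gauge expectation onto $\BB$ and then averages $\operatorname{Ad}(u^j)$ over $0\le j<2^n$.
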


\begin{proof}
Consider the covering map $\gamma : \mathbb{T}\to \mathbb{T}$ given by $\gamma(z)=z^2$, and the $C^*$-algebra $C^*(\mathbb{T}, \gamma)$ associated with the Deaconu--Renault system $\Gamma(\mathbb T, \gamma)$ \cite[Example 3]{Deaconu95}.

By \cite[Theorem 9.1]{EV06}, $C^*(\mathbb{T}, \gamma)$ is isomorphic to the Exel crossed product $C(\mathbb{T}) \rtimes_{\alpha, L} \mathbb{N}$ via an isomorphism that restricts to the identity on $C(\mathbb{T})$, where $\alpha : C(\mathbb{T})\to C(\mathbb{T})$ is an endomorphism, and $L: C(\mathbb{T}) \to C(\mathbb{T})$ is a transfer operator, which are defined by
\[
\alpha(f)(z)=f(z^2)\quad\text{and}\quad
L(f)(z) = \frac{1}{2}\sum_{w^2 = z} f(w),
\]
respectively. 

Larsen and Li \cite[Page 1398]{LL12} identify $C(\mathbb{T}) \rtimes_{\alpha, L} \mathbb{N}$ with $\Q$, and under this identification the subalgebra $C(\mathbb{T})$ corresponds to $C^*(u)$ (see also \cite[Proposition 3.3]{LRR11}).

It is known that $\gamma$ is topologically free \cite[Example 3]{Deaconu95}, and hence $C(\mathbb{T})$ is a Cartan $C^*$-subalgebra of $C^*(\mathbb{T}, \gamma)$ (\cite{Ren08}). Transporting this structure through the above isomorphisms, it follows that $C^*(u)$ is a Cartan subalgebra of $\Q$.
\end{proof}

In the rest of this section, we shall see that the Deaconu--Renault groupoid model $\Gamma(\mathbb T, \gamma)$ in Proposition \ref{prop:cartan2} provides useful tools to facilitate studying $\Q$.

\subsection{The structure of $\operatorname{Aut}(\Q, C^*(u))$}
In this section, we use the Deaconu–Renault groupoid model $\Gamma(\mathbb{T}, \gamma)$ in Proposition \ref{prop:cartan2} to study $\operatorname{Aut}(\Q, C^*(u))$, the group of all automorphisms of $\Q$ that preserve $C^*(u)$.

To simplify our writing, let $G\coloneq \Gamma(\mathbb{T}, \gamma)$ in what follows.

\begin{prop}\label{prop:orbit_equiv}
Let $G$ be the Deaconu–Renault groupoid associated with $\gamma : \mathbb{T} \to \mathbb{T}$ given by $\gamma(z) = z^2$. Suppose that 
\begin{enumerate}
\item $l, k : \mathbb{T} \to \mathbb{N}$ are continuous maps, and
\item $h : \mathbb{T} \to \mathbb{T}$ is a homeomorphism,
\end{enumerate}
such that 
\begin{align}\label{eqn:orbit_equiva}
\gamma^{l(z)}(h(z)) = \gamma^{k(z)}\big(h(\gamma(z))\big)
\qquad \text{for all } z \in \mathbb{T}.
\end{align}
holds for all $z \in \mathbb{T}$. Then $l$ and $k$ are constant maps with $l = k + 1$, and $h$ is of the form
\[
h(z) = \xi\, z^{\pm 1},
\]
where $\xi\in \mathbb{Z}(2^\infty)$. 
\end{prop}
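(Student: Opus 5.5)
The first step is to make $l$ and $k$ constant. Since $\mathbb{T}$ is connected and $\mathbb{N}$ is discrete, the continuous maps $l,k$ are constant; write $l(z)\equiv l$ and $k(z)\equiv k$. Condition \eqref{eqn:orbit_equiva} then reads
\[
h(z)^{2^{l}} = h(z^{2})^{2^{k}} \qquad \text{for all } z\in\mathbb{T}.
\]

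The second step uses degrees to relate $l$ and $k$. A homeomorphism $h$ of $\mathbb{T}$ has degree $\varepsilon=\pm1$. The left-hand side therefore has degree $2^{l}\varepsilon$. The map $z\mapsto h(z^2)^{2^k}$ has degree $2^{k}\cdot\varepsilon\cdot 2=2^{k+1}\varepsilon$. Equating the two gives $2^{l}=2^{k+1}$, so $l=k+1$.

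The third step identifies $h$ by lifting to $\mathbb{R}$. Write $h(e^{2\pi i t})=e^{2\pi i g(t)}$ with $g:\mathbb{R}\to\mathbb{R}$ continuous. Since $h$ has degree $\varepsilon$, we have $g(t+1)=g(t)+\varepsilon$. Hence $g(t)=\varepsilon t+p(t)$ with $p$ continuous and $1$-periodic. The displayed identity gives $2^{k+1}g(t)-2^{k}g(2t)\in\mathbb{Z}$ for every $t$. This is a continuous integer-valued function of $t$, so it equals a constant $m\in\mathbb{Z}$. Substituting $g=\varepsilon t+p$, the linear terms cancel and we obtain
\[
2p(t)-p(2t)=c:=m/2^{k}.
\]
Put $q:=p-c$. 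Then $q$ is bounded and satisfies $q(2t)=2q(t)$, so $q(t)=2^{-n}q(2^{n}t)$ for every $n$. Letting $n\to\infty$ gives $q\equiv0$. Thus $p\equiv c$ and
\[
h(z)=e^{2\pi i c}z^{\varepsilon}, \qquad c\in 2^{-k}\mathbb{Z}.
\]
So $\xi:=e^{2\pi i c}$ is a $2^{k}$-th root of unity, i.e.\ $\xi\in\mathbb{Z}(2^\infty)$.

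The only delicate point is the last step, namely showing that the periodic part $p$ of the lift is constant. The rescaling argument for the bounded solution $q$ of $q(2t)=2q(t)$ handles it. The remaining bookkeeping, that is, fixing branches of the lift and tracking the integer constant $m$, is routine.
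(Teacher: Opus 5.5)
Your proposal is correct and follows essentially the paper's argument: lift $h$ to $\mathbb{R}$, use continuity to make the integer-valued expression constant, deduce $l=k+1$, and solve the doubling functional equation to force the lift to be affine. The differences are minor. You obtain $l=k+1$ from degrees, whereas the paper evaluates the lift at integer shifts. You handle both orientations at once via $\varepsilon$, whereas the paper treats them as two cases. And you kill the periodic part with the bounded-rescaling estimate $q(t)=2^{-n}q(2^n t)\to 0$, whereas the paper shows $\psi(p/2^n)=p/2^n$ and then uses density of the dyadic rationals.
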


\begin{proof}
Since $\mathbb{T}$ is connected and $\mathbb{N}$ is discrete, both $l$ and $k$ are constant. By abusing the notation, we still denote these constants by $l$ and $k$. Given a homeomorphism $h$ of $\mathbb{T}$, there exists a continuous function $\phi : \mathbb{R} \to \mathbb{R}$ such that
\[
h(e^{2\pi i t}) = e^{2\pi i \phi(t)}
\]
with
\[
\phi(t+m)=\phi(t)\pm m \quad \text{for all } t\in \mathbb{R}, \ m\in\mathbb{Z},
\]
where the sign corresponds to orientation \cite[Chapter 2]{URS22}.  

Notice that the relation \eqref{eqn:orbit_equiva} becomes
\[
e^{2\pi i 2^{l}\phi(t)} = e^{2\pi i 2^{k}\phi(2t)}.
\]
Hence \(
2^{l}\phi(t) - 2^{k}\phi(2t) \in \mathbb{Z} \).
By continuity, there is $c\in \mathbb Z$ such that 
%this integer-valued expression is constant, say
\begin{align}
\label{E:phic}
2^{l}\phi(t) - 2^{k}\phi(2t) = c \quad \text{for all } t\in \mathbb{R}.
\end{align}
Note that
\begin{equation}\label{E:phi0}
    c= 2^l \phi(0)- 2^k \phi(0).
\end{equation}
%%%%%%

\textsf{Case 1:} \underline{$h$ preserves the orientation.}
Then  
\begin{align}
\label{E:po}
\phi(t+m)=\phi(t)+m \quad \text{for all }t\in \mathbb{R},\  m\in\mathbb{Z}.
\end{align}

On one hand, combing this with \eqref{E:phic} and \eqref{E:phi0} yields
\[
c = 2^{l}(\phi(0)+m) - 2^{k}(\phi(0)+2m)
= c + (2^{l}-2^{k+1})m\quad\text{for all }m\in \mathbb{Z}.
\]
This implies $l = k + 1$. Consequently, we obtain $c= 2^k \phi(0)$ from \eqref{E:phi0}.

Substituting $c$ and $l=k+1$ into \eqref{E:phic} and simplifying, we obtain
\begin{equation}\label{E:phisim}
 2\phi(t)-\phi(2t)=\phi(0)\quad \text{for all }t\in \mathbb{R}.   
\end{equation}
Let 
\[
\psi(t):=\phi(t)-\phi(0)\quad\text{for all }t\in\mathbb{R}.
\]
Then it is clear from \eqref{E:po} and \eqref{E:phisim} that 
\[
\psi(m)=m\quad\text{and}\quad
\psi(2t)=2\psi(t)\quad \text{for all }m\in\mathbb{Z},\ t\in\mathbb{R}.
\]
Simple induction shows 
\[
\psi(2^nt)=2^n\psi(t)\quad\text{for all }t\in\mathbb{R}\quad \text{and} \quad n\in \mathbb{N}.
\]
Setting $t=p/2^n$ with $p\in \mathbb{Z}$ induces 
\[
\psi(\frac{p}{2^n})=2^{-n}\psi(p)=\frac{p}{2^n} \quad\text{for all }p\in\mathbb{Z}. 
\]
%as $\psi(p)=p$ for all $p\in\mathbb{Z}$.
%%%%%%%%%%%%%
Thus $\psi(t)=t$ for all $t\in\mathbb{Z}[1/2]$. By density of $\mathbb{Z}[1/2]$ in $\mathbb{R}$ and continuity of $\psi$, it follows that $\psi(t)=t$ for all $t\in\mathbb{R}$. Thus $\phi(t)=t+\phi(0)$, and hence
\[
h(z)=e^{2\pi i \phi(0)} z\quad \text{for all }z\in \mathbb{T}.
\]

\textsf{Case 2:} \underline{$h$ reverses the orientation.}
In this case, one has 
\begin{align*}
\label{E:ro}
\phi(t+m)=\phi(t)-m \quad \text{for all }t\in \mathbb{R},\  m\in\mathbb{Z}. 
\end{align*}
Similar to the proof of Case 1, one obtains $h(z)=e^{2\pi i \phi(0)} \overline{z}$. 

Therefore, in general
\[
h(z)=\xi z^{\pm 1} \quad \text{with}\quad\xi=e^{2\pi i \phi(0)}
\]
for all $z\in\mathbb{T}$.
Since $2^{k}\phi(0)(=c)\in\mathbb{Z}$ as shown above, we have $\xi^{2^{k}}=1$. So $\xi\in\mathbb{Z}(2^\infty)$, and this completes the proof.
\end{proof}

Recall from Remark \ref{R:u-uni} that for $V\in \mathcal{U}(C^*(u))$, $\beta_V$ is an automorphism in $\operatorname{Aut}_{C^*(u)}(\Q)$  given by $\beta_V=\lambda_{(u,V)}$. 

For $z \in \mathbb{T}$, define a unitary operator on $\ell_2(\mathbb{Z})$ by
\[
U_z e_k = z^k e_k\quad \text{for all }k\in \mathbb{Z}.
\]
It is known that $U_z$ can be identified with an element of $\mathcal{D}_2$ under the canonical representation of $\Q$, if and only if  
$z\in \mathbb{Z}(2^\infty)$ (\cite[Proposition 6.20]{ACR18}).

Below is the main result of this subsection.

\begin{theo}
\label{T:autp}
Keep the same notation as above. Every automorphism $\phi\in \operatorname{Aut}(\Q, C^*(u))$
is of the form
\[
\operatorname{Ad}(U_\xi)\circ \beta_V\quad \text{or} \quad \sigma \circ \operatorname{Ad}(U_\xi)\circ \beta_V
\]
for some $V \in \mathcal{U}(C^*(u))$ and $\xi \in \mathbb{Z}(2^\infty)$. 
\end{theo}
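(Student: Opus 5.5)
Let $\phi\in\operatorname{Aut}(\Q,C^*(u))$. Since $C^*(u)\cong C(\mathbb T)$ is Cartan in $\Q$ (Proposition~\ref{prop:cartan2}) and $\phi$ preserves it, $\phi$ induces a homeomorphism $h$ of $\mathbb T$ with $\phi(f(u))=(f\circ h^{-1})(u)$, or with $h$ in place of $h^{-1}$ depending on convention. By Renault's theory, $h$ also extends to an isomorphism of the Weyl groupoid $G=\Gamma(\mathbb T,\gamma)$. The plan is to use this groupoid isomorphism to force $h(z)=\xi z^{\pm1}$ via Proposition~\ref{prop:orbit_equiv}. Then we peel off $\sigma$ and $\operatorname{Ad}(U_\xi)$ and finish with Corollary~\ref{C:u-uni}(i).

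\textbf{Step 1.} I expect the main obstacle to be producing the relation \eqref{eqn:orbit_equiva} with continuous $l,k$. The arrow $(\gamma(z),-1,z)$, or $(z,1,\gamma(z))$, lies in the clopen bisection $\{(z,1,\gamma(z))\}$ of $G$. A groupoid isomorphism maps it to a compact open bisection of $G$ whose range and source are $h(z)$ and $h(\gamma(z))$. Every arrow of $G$ has the form $(x,m-n,y)$ with $\gamma^m(x)=\gamma^n(y)$. By compactness and connectedness, the image bisection should be covered by finitely many basic sets on which the lags $m,n$ are locally constant. This gives continuous $l,k:\mathbb T\to\mathbb N$ with $\gamma^{l(z)}(h(z))=\gamma^{k(z)}(h(\gamma(z)))$.

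I would try to cite Renault's/Matui's results that Cartan-preserving automorphisms correspond to groupoid isomorphisms, and that such isomorphisms between Deaconu--Renault groupoids yield continuous orbit equivalence cocycles (\cite{Komura25} seems relevant here). If this citation route fails, an alternative is to argue directly in $\Q$. Note that $\phi(s_2)$ normalizes $C^*(u)$, because $s_2 f(u)s_2^*=\ldots$ and $s_2^* f(u)s_2=L$-type expressions preserve $C^*(u)$. One can then read off the partial homeomorphism it implements.

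\textbf{Step 2.} Proposition~\ref{prop:orbit_equiv} gives $h(z)=\xi z^{\pm1}$ with $\xi\in\mathbb Z(2^\infty)$. If the sign is $-$, replace $\phi$ by $\sigma\circ\phi$. Since $\sigma(u)=u^*$, this flips the orientation and still preserves $C^*(u)$. So assume $h(z)=\xi z$, or in other words $\phi(u)=\bar\xi u$ up to the convention. Next compute $\operatorname{Ad}(U_\xi)(u)$: since $U_\xi u U_\xi^* e_k=\xi e_{k+1}$, we get $\operatorname{Ad}(U_\xi)(u)=\xi u$. Because $U_\xi\in\D\subseteq\Q$ by \cite[Proposition 6.20]{ACR18}, $\operatorname{Ad}(U_\xi)$ is an automorphism of $\Q$ preserving $C^*(u)$. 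Choosing $\xi$ or $\bar\xi$ appropriately, $\psi:=\operatorname{Ad}(U_\xi)^{-1}\circ\phi$ satisfies $\psi(u)=u$.

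\textbf{Step 3.} By Corollary~\ref{C:u-uni}(i), equivalently Theorem~\ref{P:u-uni} applied to the identity, $\psi=\lambda_{(u,V)}=\beta_V$ for some $V\in\mathcal U(C^*(u))$. Hence $\phi=\operatorname{Ad}(U_\xi)\circ\beta_V$ in the orientation-preserving case, and $\phi=\sigma\circ\operatorname{Ad}(U_\xi)\circ\beta_V$ in the other case, using $\sigma^{-1}=\sigma$.
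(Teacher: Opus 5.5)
Your proposal is correct and follows essentially the same route as the paper. Both arguments pass from $\phi$ to an automorphism of $\Gamma(\mathbb T,\gamma)$: the paper does this via \cite[Proposition 2.1.3, Corollary 4.3.7]{Komura25}, which supply the relation \eqref{eqn:orbit_equiva} that you derive by hand. Both then apply Proposition~\ref{prop:orbit_equiv} to get $h(z)=\xi z^{\pm1}$, strip off $\sigma$ and $\operatorname{Ad}(U_\xi)$, and finish with Corollary~\ref{C:u-uni}(i).

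One small correction to Step 1: $\{(z,1,\gamma(z)):z\in\mathbb T\}$ is not a bisection, since $z\mapsto z^2$ is two-to-one; it is a compact open set on which only $r$ is injective. Your lag argument is unaffected. Moreover, the canonical cocycle is constant along the image over connected $\mathbb T$, so taking a maximum over the finite cover gives constant $l,k$ directly.
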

\begin{rmk}
one can easily check that $\operatorname{Ad}(U_\xi)\circ \beta_V$ and $\sigma \circ \operatorname{Ad}(U_\xi)\circ \beta_V$ are automorphisms of $\Q$ mapping $u$ to $\xi u$ and $\xi u^*$, respectively. As a matter of fact, by Theorem \ref{P:u-uni}, one can classify these two classes of automorphisms.  
\end{rmk}

\begin{proof}[Proof of Theorem \ref{T:autp}]
Let $G$ be the Deaconu–Renault groupoid associated with $\gamma : \mathbb{T} \to \mathbb{T}$ given by $\gamma(z) = z^2$. Let $\Phi \in \operatorname{Aut}(G)$, and $\varphi_\Phi$ be the corresponding automorphism in $\operatorname{Aut}(\Q, C^*(u))$ via \cite[Theorem 1.5.7]{Komura25}.\footnote{$\operatorname{Aut}(G)$ with $c$ being the trivial cocycle of $G$, $\varphi_\Phi(f)(g):=f(\Phi^{-1}(g)$ for all $g\in G$.}
Then \cite[Proposition 2.1.3]{Komura25}
\[
\operatorname{Aut}(\Q, C^*(u)) / \operatorname{Aut}_{C^*(u)}(\Q)
= \{[\varphi_\Phi] : \Phi \in \operatorname{Aut}(G)\}.
\]

By \cite[Corollary 4.3.7]{Komura25} and the definition of $\varphi_\Phi$ in \cite[Definition 1.5.5]{Komura25},
we have
\[
\varphi_\Phi(u) = h(u),
\]
where $h : \mathbb{T} \to \mathbb{T}$ is a homeomorphism satisfying the conditions in Proposition \ref{prop:orbit_equiv}. By the classification of such $h$, it follows that
\[
h(z) = \xi\, z^{\pm 1}
\]
for some $\xi\in \mathbb{Z}(2^\infty)$.

From the definitions of $\beta_V$, $\operatorname{Ad}(U_\xi)$ and $\sigma$, we see that there exists an automorphism 
\[
\theta \in \{\operatorname{Ad}(U_\xi)\circ \beta_V,\ \sigma \circ \operatorname{Ad}(U_\xi)\}
\]
such that $\theta(u) = \varphi_\Phi(u)$. Hence
\[
\theta^{-1} \circ \varphi_\Phi(u) = u,
\]
so $\theta^{-1} \circ \varphi_\Phi \in \operatorname{Aut}_{C^*(u)}(\Q)$. Therefore, by Corollary \ref{C:u-uni} one has 
\[
\theta^{-1} \circ \varphi_\Phi = \beta_V
\quad \text{for some } V\in\U(C^*(u)),
\]
and thus $\varphi_\Phi = \theta \circ \beta_V$.

Now let $\phi \in \operatorname{Aut}(\Q)$ with $\phi(C^*(u)) = C^*(u)$. Then $[\phi] = [\varphi_\Phi]$ for some $\Phi \in \operatorname{Aut}(G)$, so $\phi = \varphi_\Phi \circ \beta_{\widetilde V}$ for some $\widetilde V \in \U(C^*(u))$. It follows that $\phi$ is of the stated form.
\end{proof}

\subsection{$\operatorname{Inn}(\Q, C^*(u))$ and $\mathcal{N}_{C^*(u)}(\Q)$}
In this subsection, we apply Theorem~\ref{T:autp} to determine $\operatorname{Inn}(\Q,C^*(u))$, the group of inner automorphisms preserving $C^*(u)$, and $\mathcal{N}_{C^*(u)}(\Q)$, the group of unitary normalizers of $C^*(u)$ in $\Q$. We begin by exhibiting a large class of outer automorphisms of $\Q$, extending the class constructed in \cite[Theorem~5.9]{ACR18}.
\begin{theo}\label{T:outer_aut}
    Let $\phi\in \operatorname{Aut}(\Q)$ satisfy $\phi(u)=\lambda u^*$ for some $\lambda\in \mathbb{Z}(2^\infty)$. Then $\phi$ is an outer automorphism.
\end{theo}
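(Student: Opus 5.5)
Suppose, for contradiction, that $\phi=\operatorname{Ad}(w)$ for some unitary $w\in\Q$. The plan is to show that such a $w$ would make $u$ unitarily equivalent to $\lambda u^*$ in a way that is incompatible with the rigid structure of $\Q$. We will also use that the flip-flop $\sigma$ is outer.

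\textbf{Step 1: reduce to $\sigma$.} Since $\phi(u)=\lambda u^*$, we have $\phi(C^*(u))=C^*(u)$, so $\phi\in\operatorname{Aut}(\Q,C^*(u))$. Theorem~\ref{T:autp} applies and gives $\phi=\sigma\circ\operatorname{Ad}(U_\xi)\circ\beta_V$ for some $\xi\in\mathbb{Z}(2^\infty)$ and $V\in\mathcal{U}(C^*(u))$; the other form $\operatorname{Ad}(U_\xi)\circ\beta_V$ sends $u$ to $\xi u$, which is not $\lambda u^*$. The unitary $U_\xi$ lies in $\D\subseteq\Q$, so $\operatorname{Ad}(U_\xi)$ is inner. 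Hence $\phi$ is inner if and only if $\sigma\circ\beta_V$ is inner. It therefore suffices to show that $\sigma\circ\beta_V$ is outer for every $V\in\mathcal{U}(C^*(u))$.

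\textbf{Step 2: pin down $w$.} Suppose $\sigma\circ\beta_V=\operatorname{Ad}(w)$. Evaluating at $u$ gives $wuw^*=u^*$. Evaluating at $s_2$, and using $\beta_V(s_2)=Vs_2$, gives $ws_2w^*=\sigma(V)s_1=\sigma(V)u s_2$. Then $u^*\sigma(V)^*w$ commutes with $s_2$, up to conjugating $u$ through. The cleaner formulation is
\[
(u^*\sigma(V)^*w)\,s_2=u^*\sigma(V)^*(\sigma(V)us_2)w=s_2 w .
\]
This is an intertwining relation rather than a commutation relation, so I would instead aim for a commutation relation with $s_2$ and then apply \cite[Theorem~3.20]{ACR18}, or the fact that $C^*(s_2)'\cap\Q=\mathbb{C}I$ (cf.\ \cite{ACR21}), to force the relevant element to be scalar. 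If that works, we get $w\in C^*(u)$. But then $wuw^*=u$, contradicting $wuw^*=u^*$, since $u\neq u^*$.

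\textbf{Main obstacle and fallback.} The delicate step is producing an honest element of $C^*(s_2)'$ (or $C^*(s_1)'$) out of $w$ and $V$. If this becomes messy, I would use a K-theoretic argument instead. Inner automorphisms act trivially on $K_1(\Q)=\mathbb{Z}$, and $K_1(\Q)$ is generated by $[u]$ (see \cite{LL12}). On the other hand,
\[
[\phi(u)]=[\lambda u^*]=-[u]\neq[u]
\]
in $K_1$, because $\lambda\in\mathbb{T}$ is homotopic to $1$. This fallback needs only the standard fact that $[u]$ generates $K_1(\Q)\cong\mathbb{Z}$, which is the key input to confirm. It also makes Step 1 unnecessary and explains why the hypothesis $\lambda\in\mathbb{Z}(2^\infty)$ is not really needed beyond guaranteeing that such $\phi$ exist.
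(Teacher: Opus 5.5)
Your fallback is exactly the paper's proof. Inner automorphisms act trivially on $K_1(\Q)\cong\mathbb{Z}$, whereas $[\phi(u)]=[\lambda u^*]=[u^*]=-[u]\neq[u]$ because $[u]$ generates $K_1(\Q)$; in fact $[u]\neq 0$ together with torsion-freeness already suffices. This argument is complete on its own, so you should lead with it and drop Steps~1--2. Your remark that $\lambda\in\mathbb{Z}(2^\infty)$ plays no role in the argument is also correct.

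Step~2 as written does not close. The relation $(u^*\sigma(V)^*w)s_2=s_2w$ is an intertwining relation, not a commutation relation, so neither \cite[Theorem~3.20]{ACR18} nor the triviality of the relative commutant applies, and $w\in C^*(u)$ is never derived. You also could not rescue it via the normalizer description in Corollary~\ref{C:normalizer}, because that corollary is deduced from this very theorem.
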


\begin{proof}
    Recall that $K_1(\Q)\cong \mathbb{Z}$ and that the $K_1$-class of $u^n$ is given by $[u^n]=n$ for $n\in\mathbb{Z}$. Suppose, towards a contradiction, that $\phi$ is an inner automorphism. Then $K_1(\phi)=\operatorname{id}_{K_1(\Q)}$. Hence,
    \[
    [u]=K_1(\phi)([u])=[\phi(u)]=[\lambda u^*]=[u^*]=-[u],
    \]
    which is impossible since $[u]$ is a generator of $K_1(\Q)\cong\mathbb{Z}$. Therefore, $\phi$ is outer.
\end{proof}

The following corollary characterizes the inner automorphisms of $\Q$ that globally preserve $C^*(u)$.

\begin{cor}\label{C:autp}
Every $\phi \in \operatorname{Inn}(\Q, C^*(u))$ is of the form
\[
\operatorname{Ad}(U_\xi)\circ \lambda_{V\varphi(V^*)}
\]
for some $V\in\mathcal{U}(C^*(u))$ and $\xi\in\mathbb{Z}(2^\infty)$. Equivalently,
\(
\phi=\operatorname{Ad}(U_\xi V)
\).
\end{cor}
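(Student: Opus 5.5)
The plan is to combine Theorem~\ref{T:autp} with Theorem~\ref{T:outer_aut}. Let $\phi\in\operatorname{Inn}(\Q,C^*(u))$. Since $\phi$ preserves $C^*(u)$, Theorem~\ref{T:autp} writes $\phi$ either as $\operatorname{Ad}(U_\xi)\circ\beta_V$ or as $\sigma\circ\operatorname{Ad}(U_\xi)\circ\beta_V$, with $\xi\in\mathbb{Z}(2^\infty)$ and $V\in\mathcal{U}(C^*(u))$. The remark after Theorem~\ref{T:autp} says the second family sends $u$ to $\xi u^*$. (Indeed $\sigma(U_\xi u U_\xi^*)=\sigma(\xi u)=\xi u^*$, and $\beta_V$ fixes $u$.) By Theorem~\ref{T:outer_aut} such automorphisms are outer, so the second family cannot occur. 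Hence $\phi=\operatorname{Ad}(U_\xi)\circ\beta_V$.

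Since $\operatorname{Ad}(U_\xi)$ is inner (as $U_\xi\in\D\subseteq\Q$), $\beta_V$ is also inner, say $\beta_V=\operatorname{Ad}(Y)$ with $Y\in\mathcal{U}(\Q)$. Because $\beta_V$ fixes $u$, $Y$ commutes with $u$. As $C^*(u)$ is a MASA, $Y\in\mathcal{U}(C^*(u))$. Next compute $\operatorname{Ad}(Y)$ in the parametrization of Theorem~\ref{T:endZ}: $\operatorname{Ad}(Y)(u)=u$ and $\operatorname{Ad}(Y)(s_i)=Ys_iY^*$. This gives the $V$-component
\[
Ys_1Y^*s_1^*+Ys_2Y^*s_2^*=Y\varphi(Y^*),
\]
so $\operatorname{Ad}(Y)=\lambda_{(u,Y\varphi(Y^*))}$. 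Renaming $Y$ as $V$, we get $\phi=\operatorname{Ad}(U_\xi)\circ\lambda_{V\varphi(V^*)}=\operatorname{Ad}(U_\xi V)$. The converse inclusion is immediate: $\operatorname{Ad}(U_\xi V)$ is inner and preserves $C^*(u)$, since $\operatorname{Ad}(V)$ fixes $C^*(u)$ and $U_\xi uU_\xi^*=\xi u$.

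The only delicate step is deducing $Y\in C^*(u)$. Two points need care. First, the unitary $U_\xi$ is in $\Q$ by \cite[Proposition 6.20]{ACR18}, which ensures $\beta_V=\operatorname{Ad}(U_\xi^*)\circ\phi$ is inner. Second, we need maximal abelianness of $C^*(u)$, which follows from Proposition~\ref{prop:cartan2}. The identification of $\operatorname{Ad}(Y)$ with $\lambda_{(u,Y\varphi(Y^*))}$ is then a routine verification via the correspondence $\Xi$.
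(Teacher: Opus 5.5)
Your proposal is correct and follows the paper's proof: you rule out the $\sigma\circ\operatorname{Ad}(U_\xi)\circ\beta_V$ family via Theorem~\ref{T:outer_aut} and then reduce to the innerness of $\beta_V$. Your MASA argument ($Y\in C^*(u)'\cap\Q=C^*(u)$), together with the computation $\operatorname{Ad}(Y)=\lambda_{(u,Y\varphi(Y^*))}$, makes explicit the step the paper states without detail, namely that $\beta_W$ is inner precisely when $W=V\varphi(V^*)$ for some $V\in\mathcal{U}(C^*(u))$.
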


\begin{proof}
Let $\phi \in \operatorname{Inn}(\Q, C^*(u))$. By Theorem \ref{T:autp}, $\phi$ is either of the form
\[
\operatorname{Ad}(U_\xi)\circ \beta_V
\quad \text{or} \quad
\sigma \circ \operatorname{Ad}(U_\xi)\circ \beta_V
\]
for some $W\in\mathcal{U}(C^*(u))$ and $\xi\in\mathbb{Z}(2^\infty)$. By Theorem \ref{T:outer_aut}, the latter case is impossible, since
\[
(\sigma \circ \operatorname{Ad}(U_\xi)\circ \beta_W)(u)=\xi u^*,
\]
and hence $\sigma \circ \operatorname{Ad}(U_\xi)\circ \beta_W$ is outer. Therefore,
\(
\phi=\operatorname{Ad}(U_\xi)\circ \beta_V.
\)
Moreover, $\operatorname{Ad}(U_\xi)\circ \beta_V$ is inner if and only if $\beta_W$ is inner. This occurs precisely when
\(
W=V\varphi(V^*)
\)
for some $V\in\mathcal{U}(C^*(u))$. Hence,
\[
\phi=\operatorname{Ad}(U_\xi)\circ \lambda_{V\varphi(V^*)},
\]
as required.
\end{proof}

Before stating a consequence of the preceding result, recall the unitary normalizer normalizer of $C^*(u)$ in $\Q$:
\[
\mathcal{N}_{C^*(u)}(\Q)
\coloneqq
\{\,w\in\mathcal{U}(\Q):
wC^*(u)w^*=C^*(u)\,\}.
\]
We now determine $\mathcal{N}_{C^*(u)}(\Q)$, thereby strengthening \cite[Proposition 3.1]{ACR20} and resolving \cite[Question 9.27]{ACR21}.
\begin{cor}\label{C:normalizer}
Every $w\in\mathcal{N}_{C^*(u)}(\Q)$ can be written as
\(
w=U_\xi V
\)
for some $V\in\mathcal{U}(C^*(u))$ and $\xi\in\mathbb{Z}(2^\infty)$. Equivalently,
\[
\mathcal{N}_{C^*(u)}(\Q)
=
\left\langle
\mathcal{U}(C^*(u)),\,
U_\xi:\xi\in\mathbb{Z}(2^\infty)
\right\rangle.
\]
\end{cor}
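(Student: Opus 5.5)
Given $w\in\mathcal{N}_{C^*(u)}(\Q)$, the automorphism $\operatorname{Ad}(w)$ lies in $\operatorname{Inn}(\Q,C^*(u))$. Corollary~\ref{C:autp} then supplies $V\in\mathcal{U}(C^*(u))$ and $\xi\in\mathbb{Z}(2^\infty)$ with $\operatorname{Ad}(w)=\operatorname{Ad}(U_\xi V)$. It follows that $w^*U_\xi V$ lies in the centre of $\Q$. Since $\Q$ is simple (it is a Kirchberg algebra), its centre is $\mathbb{C}I$, so $w=\mu U_\xi V$ for some $\mu\in\mathbb{T}$. Absorbing the scalar $\mu$ into $V$, which is legitimate because $\mu V\in\mathcal{U}(C^*(u))$, gives $w=U_\xi V'$ with $V'=\mu V$. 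This proves the first assertion.

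For the equality of groups, the inclusion ``$\subseteq$'' is now immediate. For ``$\supseteq$'', it suffices to check that each generator normalizes $C^*(u)$. Unitaries in $C^*(u)$ trivially do, since $C^*(u)$ is abelian. For $U_\xi$, a computation in the canonical representation gives $U_\xi u U_\xi^* e_k=\xi^{k+1}\xi^{-k}e_{k+1}$, so $U_\xi uU_\xi^*=\xi u$. Hence $U_\xi C^*(u)U_\xi^*=C^*(\xi u)=C^*(u)$. By \cite[Proposition 6.20]{ACR18}, $U_\xi\in\D\subseteq\Q$ is a unitary of $\Q$, so it belongs to $\mathcal{N}_{C^*(u)}(\Q)$. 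The normalizer is a group, so it contains the group generated by these elements.

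The only delicate point is the passage from $\operatorname{Ad}(w)=\operatorname{Ad}(U_\xi V)$ to $w=\mu U_\xi V$, which uses that the centre of $\Q$ is trivial. This holds because $\Q$ is simple, as recalled in the preliminaries. Everything else is a direct consequence of Corollary~\ref{C:autp}. Note also that every element of the generated group already has the form $U_\xi V$: the relation $VU_\xi=U_\xi(U_\xi^*VU_\xi)$ holds with $U_\xi^*VU_\xi\in C^*(u)$, which is consistent with the first description.
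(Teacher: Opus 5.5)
Your proof is correct and follows the paper's argument: you apply Corollary~\ref{C:autp} to $\operatorname{Ad}(w)$, observe that $w^*U_\xi V$ is central, and absorb the resulting scalar into $V$. The only differences are minor. For triviality of the centre, the paper cites $\Q'\cap\Q=\mathbb{C}1$ from \cite{ACR18}, while you derive it from simplicity of $\Q$. You also explicitly check that $U_\xi$ and $\mathcal{U}(C^*(u))$ normalize $C^*(u)$, which gives the reverse inclusion that the paper leaves implicit.
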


\begin{proof}
Let $w\in\mathcal{N}_{C^*(u)}(\Q)$. Then $\operatorname{Ad}(w)\in\operatorname{Inn}(\Q,C^*(u))$. By Corollary~\ref{C:autp},
\[
\operatorname{Ad}(w)=\operatorname{Ad}(U_\xi V_1)
\]
for some $V_1\in\mathcal{U}(C^*(u))$ and $\xi\in\mathbb{Z}(2^\infty)$. Hence,
\[
wxw^*=U_\xi V_1x(U_\xi V_1)^* \quad \text{for all }x\in\Q.
\]
It follows that
\[
V_1^*U_\xi^*w\in\Q'\cap\Q.
\]
Since $\Q'\cap\Q=\mathbb{C}1$ by \cite[Corollary~3.12]{ACR18}, there exists $\lambda\in\mathbb{T}$ such that
\[
V_1^*U_\xi^*w=\lambda1.
\]
Therefore,
\[
w=U_\xi(\lambda V_1)=U_\xi V,
\]
where $V=\lambda V_1\in\mathcal{U}(C^*(u))$.
\end{proof}

\subsection{The Weyl group $\mathcal{W}(\Q, C^*(u))$ and the topological full group $[[\Gamma(\mathbb{T},\gamma)]]$}
%%%%%%%%%%
Recall that the Weyl group for the pair $(\Q, C^*(u))$ is
\[
\mathcal{W}(\Q, C^*(u))
=
\operatorname{Aut}(\Q, C^*(u))
\big/
\operatorname{Aut}_{C^*(u)}(\Q),
\]
and that its outer counterpart is
\[
\mathcal{W}_\pi(\Q, C^*(u))
=
\operatorname{Out}(\Q, C^*(u))
\big/
\operatorname{Out}_{C^*(u)}(\Q).
\]
Using the results established in this section so far, we are now ready to give a complete description of $\mathcal{W}(\Q,C^*(u))$ and, as a by-product, describe the topological full group $[[\Gamma(\mathbb{T},\gamma)]]$. We refer to \cite{Matui12} for background on topological full groups.
\begin{cor}\label{C:Weyl2}
 We have
\[
\mathcal{W}(\Q, C^*(u))
=
\left\langle
[\sigma],\,
[\operatorname{Ad}(U_\xi)]
:\xi\in\mathbb{Z}(2^\infty)
\right\rangle
\cong
\mathbb{Z}(2^\infty)\rtimes\mathbb{Z}/2\mathbb{Z},
\]
and
\[
\mathcal{W}_\pi(\Q, C^*(u))
=
\{[{\rm id}],[\sigma]\}
\cong
\mathbb{Z}/2\mathbb{Z}.
\]
\end{cor}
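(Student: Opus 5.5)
Theorem~\ref{T:autp} parametrizes $\operatorname{Aut}(\Q,C^*(u))$ by triples $(n,\xi,V)$, and the plan is to push this parametrization down to the quotient $\mathcal{W}(\Q,C^*(u))=\operatorname{Aut}(\Q,C^*(u))/\operatorname{Aut}_{C^*(u)}(\Q)$. By Corollary~\ref{C:u-uni}(i), $\operatorname{Aut}_{C^*(u)}(\Q)=\{\beta_V: V\in\mathcal{U}(C^*(u))\}$. Hence every class in $\mathcal{W}$ is $[\sigma^n\circ\operatorname{Ad}(U_\xi)]$ with $n\in\{0,1\}$ and $\xi\in\mathbb{Z}(2^\infty)$, which gives the stated generating set. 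To identify the group, consider the map $\mathcal{W}(\Q,C^*(u))\to\operatorname{Homeo}(\mathbb{T})$, $[\phi]\mapsto$ the homeomorphism $h$ with $\phi(u)=h(u)$. It is well defined because elements of $\operatorname{Aut}_{C^*(u)}(\Q)$ fix $u$. It is a group homomorphism, possibly an anti-homomorphism depending on convention, which is harmless since it would then be composed with inversion. It is injective because $\phi(u)=u$ means exactly $\phi\in\operatorname{Aut}_{C^*(u)}(\Q)$. Since $\operatorname{Ad}(U_\xi)(u)=\xi u$ (as $U_\xi u U_\xi^*e_k=\xi e_{k+1}$) and $\sigma(u)=u^*$, the image is the group $\{z\mapsto \xi z^{\pm1}:\xi\in\mathbb{Z}(2^\infty)\}$. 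This group is $\mathbb{Z}(2^\infty)\rtimes\mathbb{Z}/2\mathbb{Z}$, with $z\mapsto\bar z$ acting by inversion on the rotations: indeed $\sigma\operatorname{Ad}(U_\xi)\sigma^{-1}$ sends $u\mapsto \bar\xi u$.

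For the outer Weyl group I would argue in the same way modulo inner automorphisms. By Corollary~\ref{C:autp}, each $\operatorname{Ad}(U_\xi)$ is inner and lies in $\operatorname{Inn}(\Q,C^*(u))$. So in $\operatorname{Out}(\Q,C^*(u))=\operatorname{Aut}(\Q,C^*(u))/\operatorname{Inn}(\Q,C^*(u))$, every class is $[\sigma^n\circ\beta_V]$. Modding out further by $\operatorname{Out}_{C^*(u)}(\Q)$, which is the image of the $\beta_V$'s, leaves only $[\mathrm{id}]$ and $[\sigma]$.

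The remaining task is to show that $[\sigma]$ is nontrivial, that is, $\sigma\neq\operatorname{Ad}(w)\circ\beta_V$ for any inner $\operatorname{Ad}(w)$ preserving $C^*(u)$ and any $V$. Evaluating both sides at $u$ shows that such an identity would force $\operatorname{Ad}(w)(u)=u^*$. This is impossible by Theorem~\ref{T:outer_aut} with $\lambda=1$, or directly by the $K_1$ argument, since $[u]=-[u]$ fails in $K_1(\Q)\cong\mathbb{Z}$.

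The main point needing care is the precise definition of $\mathcal{W}_\pi$ as a quotient of quotients, namely checking that $\operatorname{Out}_{C^*(u)}(\Q)$ is normal in $\operatorname{Out}(\Q,C^*(u))$ and that the inner classes $\operatorname{Ad}(U_\xi)$ are absorbed. The same $K_1$ invariant also shows that $[\sigma]$ has order exactly $2$, which completes the identification $\mathcal{W}_\pi(\Q,C^*(u))\cong\mathbb{Z}/2\mathbb{Z}$.
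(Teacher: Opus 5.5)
Your proposal is correct and follows essentially the same route as the paper. It obtains the generators of $\mathcal{W}(\Q,C^*(u))$ from Theorem~\ref{T:autp} and Corollary~\ref{C:u-uni}, and the semidirect structure from $\sigma\circ\operatorname{Ad}(U_\xi)\circ\sigma^{-1}(u)=\bar\xi u$. For $\mathcal{W}_\pi(\Q,C^*(u))$ it uses innerness of $\operatorname{Ad}(U_\xi)$ together with the $K_1$ obstruction of Theorem~\ref{T:outer_aut}.

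Your faithful map $[\phi]\mapsto h$ (with $\phi(u)=h(u)$) into the homeomorphisms of $\mathbb{T}$ is a tidier packaging of the paper's principle that a class is determined by the image of $u$. It also makes explicit two points the paper leaves implicit: that $\xi\mapsto[\operatorname{Ad}(U_\xi)]$ is injective, and that $[\sigma]$ lies outside the rotation subgroup.
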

\begin{proof}
The proof of Theorem~\ref{T:autp} shows that
\[
\mathcal{W}(\Q, C^*(u))
=
\left\langle
[\sigma],\,
[\operatorname{Ad}(U_\xi)]
:\xi\in\mathbb{Z}(2^\infty)
\right\rangle.
\]

To identify the group structure, note that $\operatorname{Ad}(U_\xi)(u)=\xi u$ by \cite{ACR18}. Hence,
\[
\sigma\circ\operatorname{Ad}(U_\xi)\circ\sigma(u)
=
\operatorname{Ad}(U_{\bar{\xi}})(u).
\]
It follows from Corollary~\ref{C:u-uni} and Remark~\ref{R:u-uni} that
\[
[\sigma\circ\operatorname{Ad}(U_\xi)\circ\sigma]
=
[\operatorname{Ad}(U_{\bar{\xi}})].
\]
Thus, conjugation by $[\sigma]$ acts on $\mathbb{Z}(2^\infty)$ by inversion, and therefore
\[
\mathcal{W}(\Q,C^*(u))
\cong
\mathbb{Z}(2^\infty)\rtimes\mathbb{Z}/2\mathbb{Z}.
\]

The description of $\mathcal{W}_\pi(\Q,C^*(u))$ follows immediately from Theorem~\ref{T:outer_aut} and Corollary~\ref{C:autp}.
\end{proof}
We can now easily describe the topological full group of 
$\Gamma(\mathbb T, \gamma)$.
\begin{cor}
\label{C:TFG}
$[[\Gamma(\mathbb T, \gamma]]\cong \mathbb{Z}(2^\infty)$.
\end{cor}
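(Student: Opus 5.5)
We will realize $[[\Gamma(\mathbb T,\gamma)]]$ through the standard correspondence between the topological full group of an effective étale groupoid and the unitary normalizers of the Cartan subalgebra. For the Cartan pair $(C^*(\Gamma(\mathbb T,\gamma)),C(\mathbb T))\cong(\Q,C^*(u))$ of Proposition~\ref{prop:cartan2}, Matui's framework \cite{Matui12} gives a short exact sequence
\[
1\longrightarrow \mathcal{U}(C(\mathbb T))\longrightarrow \mathcal{N}_{C(\mathbb T)}\bigl(C^*(\Gamma(\mathbb T,\gamma))\bigr)\longrightarrow [[\Gamma(\mathbb T,\gamma)]]\longrightarrow 1 .
\]
The map to $[[\Gamma(\mathbb T,\gamma)]]$ sends a unitary normalizer $w$ to the full bisection $\{g: w(g)\neq 0\}$. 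This bisection is open, compact and full because $w$ is a unitary normalizer and $\Gamma(\mathbb T,\gamma)$ is effective, since $\gamma$ is topologically free. Surjectivity holds because the characteristic function $1_U$ of a full bisection $U$ is a unitary normalizer. The kernel is exactly the set of normalizers supported on the unit space, namely $\mathcal{U}(C(\mathbb T))$.

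We then use Corollary~\ref{C:normalizer}, which gives $\mathcal{N}_{C^*(u)}(\Q)=\{U_\xi V:\xi\in\mathbb{Z}(2^\infty),\ V\in\mathcal{U}(C^*(u))\}$. This yields
\[
[[\Gamma(\mathbb T,\gamma)]]\cong \mathcal{N}_{C^*(u)}(\Q)/\mathcal{U}(C^*(u)),
\]
and we define a homomorphism $\Psi:\mathbb{Z}(2^\infty)\to \mathcal{N}_{C^*(u)}(\Q)/\mathcal{U}(C^*(u))$ by $\xi\mapsto [U_\xi]$. In the canonical representation one has $U_\xi U_\eta=U_{\xi\eta}$, so $\Psi$ is a group homomorphism. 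By Corollary~\ref{C:normalizer} every coset is represented by some $U_\xi$, so $\Psi$ is surjective.

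For injectivity, suppose $U_\xi\in \mathcal{U}(C^*(u))$. Then $U_\xi$ commutes with $u$. But $\operatorname{Ad}(U_\xi)(u)=\xi u$, as also follows from $U_\xi u U_\xi^* e_k=\xi e_{k+1}$. Hence $\xi u=u$, so $\xi=1$.

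An alternative route that avoids normalizers is to identify $[[G]]$ with the image of $\operatorname{Inn}(\Q,C^*(u))$ in the Weyl group. By Corollary~\ref{C:Weyl2} and Theorem~\ref{T:outer_aut}, this image is the subgroup $\{[\operatorname{Ad}(U_\xi)]\}\cong\mathbb{Z}(2^\infty)$, and the conclusion is the same.

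The main obstacle is justifying the exact sequence. This requires the identification $\Q\cong C^*(\Gamma(\mathbb T,\gamma))$ to carry $C^*(u)$ to $C(\mathbb T)$, which is given in Proposition~\ref{prop:cartan2}. It also requires that every unitary normalizer is supported on a compact open bisection; this is standard for effective groupoids (Renault, Matui), and we will cite it rather than reprove it. It is also worth checking against the geometry that each $U_\xi$ corresponds to the full bisection implementing the rotation $z\mapsto \xi z$. This is consistent with $\gamma^{k}(\xi z)=\gamma^k(z)$ when $\xi^{2^k}=1$, so the rotation lies in the groupoid.
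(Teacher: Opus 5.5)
Your argument is correct and is essentially the paper's. The paper invokes \cite[Proposition 5.7]{Matui12} in the form $[[\Gamma(\mathbb T,\gamma)]]\cong\operatorname{Inn}(\Q,C^*(u))/\operatorname{Inn}_{C^*(u)}(\Q)$ and reads off the answer from Corollary~\ref{C:autp}; this is canonically the same quotient as your $\mathcal{N}_{C^*(u)}(\Q)/\mathcal{U}(C^*(u))$ because $\Q'\cap\Q=\mathbb{C}1$ and $C^*(u)$ is maximal abelian, and your ``alternative route'' is literally the paper's proof. Your explicit check that $U_\xi\in C^*(u)$ forces $\xi=1$, via $\operatorname{Ad}(U_\xi)(u)=\xi u$, supplies the injectivity step that the paper leaves implicit.
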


\begin{proof}
By \cite[Proposition 5.7]{Matui12},
\[
[[\Gamma(\mathbb T, \gamma)]]\cong \operatorname{Inn}(\Q,C^*(u))/
\operatorname{Inn}_{C^*(u)}(\Q).
\]
It follows from Corollary~\ref{C:autp} that every $\phi\in\operatorname{Inn}(\Q,C^*(u))$ is of the form
\[
\phi=\operatorname{Ad}(U_\xi V)
\]
for some $V\in\mathcal{U}(C^*(u))$ and $\xi\in\mathbb{Z}(2^\infty)$. Moreover, $\phi$ belongs to $\operatorname{Inn}_{C^*(u)}(\Q)$ precisely when it is of the form $\operatorname{Ad}(V)$. Hence, the quotient identifies the class of $\operatorname{Ad}(U_\xi V)$ with the class of $\operatorname{Ad}(U_\xi)$, and therefore
\[
[[\Gamma(\mathbb T, \gamma)]]\cong \mathbb{Z}(2^\infty),
\]
as desired.
\end{proof}

We conclude the paper with the following remark.
\begin{rmk}
It is worth mentioning that some results in this paper may be extended to broader classes of $C^*$-algebras.
\end{rmk}

\bibliographystyle{elsarticle-harv} 
 \bibliography{mybib}

\begin{thebibliography}{40}
\expandafter\ifx\csname natexlab\endcsname\relax\def\natexlab#1{#1}\fi
\providecommand{\url}[1]{\texttt{#1}}
\providecommand{\href}[2]{#2}
\providecommand{\path}[1]{#1}
\providecommand{\DOIprefix}{doi:}
\providecommand{\ArXivprefix}{arXiv:}
\providecommand{\URLprefix}{URL: }
\providecommand{\Pubmedprefix}{pmid:}
\providecommand{\doi}[1]{\href{http://dx.doi.org/#1}{\path{#1}}}
\providecommand{\Pubmed}[1]{\href{pmid:#1}{\path{#1}}}
\providecommand{\bibinfo}[2]{#2}
\ifx\xfnm\relax \def\xfnm[#1]{\unskip,\space#1}\fi
%Type = Article
\bibitem[{Aiello et~al.(2018a)Aiello, Conti and Rossi}]{ACR18b}
\bibinfo{author}{Aiello, V.}, \bibinfo{author}{Conti, R.}, \bibinfo{author}{Rossi, S.}, \bibinfo{year}{2018}a.
\newblock \bibinfo{title}{Diagonal automorphisms of the 2-adic ring {$C^*$}-algebra}.
\newblock \bibinfo{journal}{Q. J. Math.} \bibinfo{volume}{69}, \bibinfo{pages}{815--833}.
\newblock \URLprefix \url{https://doi.org/10.1093/qmath/hax064}, \DOIprefix\doi{10.1093/qmath/hax064}.
%Type = Article
\bibitem[{Aiello et~al.(2018b)Aiello, Conti and Rossi}]{ACR18}
\bibinfo{author}{Aiello, V.}, \bibinfo{author}{Conti, R.}, \bibinfo{author}{Rossi, S.}, \bibinfo{year}{2018}b.
\newblock \bibinfo{title}{A look at the inner structure of the 2-adic ring {$C^*$}-algebra and its automorphism groups}.
\newblock \bibinfo{journal}{Publ. Res. Inst. Math. Sci.} \bibinfo{volume}{54}, \bibinfo{pages}{45--87}.
\newblock \URLprefix \url{https://doi.org/10.4171/PRIMS/54-1-2}, \DOIprefix\doi{10.4171/PRIMS/54-1-2}.
%Type = Article
\bibitem[{Aiello et~al.(2020)Aiello, Conti and Rossi}]{ACR20}
\bibinfo{author}{Aiello, V.}, \bibinfo{author}{Conti, R.}, \bibinfo{author}{Rossi, S.}, \bibinfo{year}{2020}.
\newblock \bibinfo{title}{Normalizers and permutative endomorphisms of the 2-adic ring {$C^*$}-algebra}.
\newblock \bibinfo{journal}{J. Math. Anal. Appl.} \bibinfo{volume}{481}, \bibinfo{pages}{123395, 25}.
\newblock \URLprefix \url{https://doi.org/10.1016/j.jmaa.2019.123395}, \DOIprefix\doi{10.1016/j.jmaa.2019.123395}.
%Type = Article
\bibitem[{Aiello et~al.(2021)Aiello, Conti and Rossi}]{ACR21}
\bibinfo{author}{Aiello, V.}, \bibinfo{author}{Conti, R.}, \bibinfo{author}{Rossi, S.}, \bibinfo{year}{2021}.
\newblock \bibinfo{title}{A hitchhiker's guide to endomorphisms and automorphisms of {C}untz algebras}.
\newblock \bibinfo{journal}{Rend. Mat. Appl. (7)} \bibinfo{volume}{42}, \bibinfo{pages}{61--162}.
%Type = Article
\bibitem[{Barlak et~al.(2018)Barlak, Omland and Stammeier}]{BOS18}
\bibinfo{author}{Barlak, S.}, \bibinfo{author}{Omland, T.}, \bibinfo{author}{Stammeier, N.}, \bibinfo{year}{2018}.
\newblock \bibinfo{title}{On the {$K$}-theory of {$C^{\ast}$}-algebras arising from integral dynamics}.
\newblock \bibinfo{journal}{Ergodic Theory Dynam. Systems} \bibinfo{volume}{38}, \bibinfo{pages}{832--862}.
\newblock \URLprefix \url{https://doi.org/10.1017/etds.2016.63}, \DOIprefix\doi{10.1017/etds.2016.63}.
%Type = Article
\bibitem[{Bassi and Conti(forthcoming)}]{BC2025}
\bibinfo{author}{Bassi, J.}, \bibinfo{author}{Conti, R.}, \bibinfo{year}{forthcoming}.
\newblock \bibinfo{title}{On the inclusion $\mathcal{O}_2 \subset \mathcal{Q}_2$}.
\newblock \bibinfo{journal}{Kyoto Journal of Mathematics\hskip -0.02cm} , \bibinfo{pages}{to appear}.
%Type = Article
\bibitem[{Bratteli and Jorgensen(1999)}]{BJ99}
\bibinfo{author}{Bratteli, O.}, \bibinfo{author}{Jorgensen, P.E.T.}, \bibinfo{year}{1999}.
\newblock \bibinfo{title}{Iterated function systems and permutation representations of the {C}untz algebra}.
\newblock \bibinfo{journal}{Mem. Amer. Math. Soc.} \bibinfo{volume}{139}, \bibinfo{pages}{x+89}.
\newblock \URLprefix \url{https://doi.org/10.1090/memo/0663}, \DOIprefix\doi{10.1090/memo/0663}.
%Type = Article
\bibitem[{Brownlowe et~al.(2014)Brownlowe, Ramagge, Robertson and Whittaker}]{BRRW14}
\bibinfo{author}{Brownlowe, N.}, \bibinfo{author}{Ramagge, J.}, \bibinfo{author}{Robertson, D.}, \bibinfo{author}{Whittaker, M.F.}, \bibinfo{year}{2014}.
\newblock \bibinfo{title}{Zappa-{S}z\'ep products of semigroups and their {$C^\ast$}-algebras}.
\newblock \bibinfo{journal}{J. Funct. Anal.} \bibinfo{volume}{266}, \bibinfo{pages}{3937--3967}.
\newblock \URLprefix \url{https://doi.org/10.1016/j.jfa.2013.12.025}, \DOIprefix\doi{10.1016/j.jfa.2013.12.025}.
%Type = Article
\bibitem[{Choi and Latr\'emoli\`ere(2012)}]{CL12}
\bibinfo{author}{Choi, M.D.}, \bibinfo{author}{Latr\'emoli\`ere, F.}, \bibinfo{year}{2012}.
\newblock \bibinfo{title}{Symmetry in the {C}untz algebra on two generators}.
\newblock \bibinfo{journal}{J. Math. Anal. Appl.} \bibinfo{volume}{387}, \bibinfo{pages}{1050--1060}.
\newblock \URLprefix \url{https://doi.org/10.1016/j.jmaa.2011.10.008}, \DOIprefix\doi{10.1016/j.jmaa.2011.10.008}.
%Type = Article
\bibitem[{Clark et~al.(2016)Clark, an~Huef and Raeburn}]{CaHR16}
\bibinfo{author}{Clark, L.O.}, \bibinfo{author}{an~Huef, A.}, \bibinfo{author}{Raeburn, I.}, \bibinfo{year}{2016}.
\newblock \bibinfo{title}{Phase transitions on the {T}oeplitz algebras of {B}aumslag-{S}olitar semigroups}.
\newblock \bibinfo{journal}{Indiana Univ. Math. J.} \bibinfo{volume}{65}, \bibinfo{pages}{2137--2173}.
\newblock \URLprefix \url{https://doi.org/10.1512/iumj.2016.65.5934}, \DOIprefix\doi{10.1512/iumj.2016.65.5934}.
%Type = Article
\bibitem[{Conti et~al.(2012a)Conti, Hong and Szyma\'nski}]{CHS12a}
\bibinfo{author}{Conti, R.}, \bibinfo{author}{Hong, J.H.}, \bibinfo{author}{Szyma\'nski, W.}, \bibinfo{year}{2012}a.
\newblock \bibinfo{title}{The restricted {W}eyl group of the {C}untz algebra and shift endomorphisms}.
\newblock \bibinfo{journal}{J. Reine Angew. Math.} \bibinfo{volume}{667}, \bibinfo{pages}{177--191}.
\newblock \URLprefix \url{https://doi.org/10.1515/crelle.2011.125}, \DOIprefix\doi{10.1515/crelle.2011.125}.
%Type = Article
\bibitem[{Conti et~al.(2012b)Conti, Hong and Szyma\'nski}]{CHS12}
\bibinfo{author}{Conti, R.}, \bibinfo{author}{Hong, J.H.}, \bibinfo{author}{Szyma\'nski, W.}, \bibinfo{year}{2012}b.
\newblock \bibinfo{title}{The {W}eyl group of the {C}untz algebra}.
\newblock \bibinfo{journal}{Adv. Math.} \bibinfo{volume}{231}, \bibinfo{pages}{3147--3161}.
\newblock \URLprefix \url{https://doi.org/10.1016/j.aim.2012.09.003}, \DOIprefix\doi{10.1016/j.aim.2012.09.003}.
%Type = Article
\bibitem[{Conti et~al.(2010)Conti, R{\o}rdam and Szyma\'nski}]{CRS10}
\bibinfo{author}{Conti, R.}, \bibinfo{author}{R{\o}rdam, M.}, \bibinfo{author}{Szyma\'nski, W.}, \bibinfo{year}{2010}.
\newblock \bibinfo{title}{Endomorphisms of {$\mathcal{O}_n$} which preserve the canonical {UHF}-subalgebra}.
\newblock \bibinfo{journal}{J. Funct. Anal.} \bibinfo{volume}{259}, \bibinfo{pages}{602--617}.
\newblock \URLprefix \url{https://doi.org/10.1016/j.jfa.2010.03.027}, \DOIprefix\doi{10.1016/j.jfa.2010.03.027}.
%Type = Article
\bibitem[{Conti and Szyma\'nski(2011)}]{CS11}
\bibinfo{author}{Conti, R.}, \bibinfo{author}{Szyma\'nski, W.}, \bibinfo{year}{2011}.
\newblock \bibinfo{title}{Labeled trees and localized automorphisms of the {C}untz algebras}.
\newblock \bibinfo{journal}{Trans. Amer. Math. Soc.} \bibinfo{volume}{363}, \bibinfo{pages}{5847--5870}.
\newblock \URLprefix \url{https://doi.org/10.1090/S0002-9947-2011-05234-7}, \DOIprefix\doi{10.1090/S0002-9947-2011-05234-7}.
%Type = Article
\bibitem[{Cuntz(1977)}]{Cuntz77}
\bibinfo{author}{Cuntz, J.}, \bibinfo{year}{1977}.
\newblock \bibinfo{title}{Simple {$C^*$}-algebras generated by isometries}.
\newblock \bibinfo{journal}{Comm. Math. Phys.} \bibinfo{volume}{57}, \bibinfo{pages}{173--185}.
\newblock \URLprefix \url{http://projecteuclid.org/euclid.cmp/1103901288}.
%Type = Article
\bibitem[{Cuntz(1980)}]{Cun80}
\bibinfo{author}{Cuntz, J.}, \bibinfo{year}{1980}.
\newblock \bibinfo{title}{Automorphisms of certain simple {$C^*$}-algebras}.
\newblock \bibinfo{journal}{Quantum fields-algebras, processes (Proc. Sympos., Univ. Bielefeld, Bielefeld, 1978), Springer, Vienna} , \bibinfo{pages}{187--196}.
%Type = Article
\bibitem[{Deaconu(1995)}]{Deaconu95}
\bibinfo{author}{Deaconu, V.}, \bibinfo{year}{1995}.
\newblock \bibinfo{title}{Groupoids associated with endomorphisms}.
\newblock \bibinfo{journal}{Trans. Amer. Math. Soc.} \bibinfo{volume}{347}, \bibinfo{pages}{1779--1786}.
\newblock \URLprefix \url{https://doi.org/10.2307/2154972}, \DOIprefix\doi{10.2307/2154972}.
%Type = Article
\bibitem[{Doplicher and Roberts(1990)}]{DR90}
\bibinfo{author}{Doplicher, S.}, \bibinfo{author}{Roberts, J.E.}, \bibinfo{year}{1990}.
\newblock \bibinfo{title}{Why there is a field algebra with a compact gauge group describing the superselection structure in particle physics}.
\newblock \bibinfo{journal}{Comm. Math. Phys.} \bibinfo{volume}{131}, \bibinfo{pages}{51--107}.
\newblock \URLprefix \url{http://projecteuclid.org/euclid.cmp/1104200703}.
%Type = Article
\bibitem[{Exel and Vershik(2006)}]{EV06}
\bibinfo{author}{Exel, R.}, \bibinfo{author}{Vershik, A.}, \bibinfo{year}{2006}.
\newblock \bibinfo{title}{{$C^\ast$}-algebras of irreversible dynamical systems}.
\newblock \bibinfo{journal}{Canad. J. Math.} \bibinfo{volume}{58}, \bibinfo{pages}{39--63}.
\newblock \URLprefix \url{https://doi.org/10.4153/CJM-2006-003-x}, \DOIprefix\doi{10.4153/CJM-2006-003-x}.
%Type = Book
\bibitem[{Folland(2016)}]{Fol95}
\bibinfo{author}{Folland, G.B.}, \bibinfo{year}{2016}.
\newblock \bibinfo{title}{A course in abstract harmonic analysis}.
\newblock Textbooks in Mathematics. \bibinfo{edition}{second} ed., \bibinfo{publisher}{CRC Press, Boca Raton, FL}.
%Type = Book
\bibitem[{Johnson(1972)}]{Johnson72}
\bibinfo{author}{Johnson, B.E.}, \bibinfo{year}{1972}.
\newblock \bibinfo{title}{Cohomology in {B}anach algebras}. volume \bibinfo{volume}{No. 127} of \textit{\bibinfo{series}{Memoirs of the American Mathematical Society}}.
\newblock \bibinfo{publisher}{American Mathematical Society, Providence, RI}.
%Type = Article
\bibitem[{Katsura(2008)}]{Katsura08}
\bibinfo{author}{Katsura, T.}, \bibinfo{year}{2008}.
\newblock \bibinfo{title}{A class of {$C^*$}-algebras generalizing both graph algebras and homeomorphism {$C^*$}-algebras. {IV}. {P}ure infiniteness}.
\newblock \bibinfo{journal}{J. Funct. Anal.} \bibinfo{volume}{254}, \bibinfo{pages}{1161--1187}.
\newblock \URLprefix \url{https://doi.org/10.1016/j.jfa.2007.11.014}, \DOIprefix\doi{10.1016/j.jfa.2007.11.014}.
%Type = Article
\bibitem[{Komura(2025)}]{Komura25}
\bibinfo{author}{Komura, F.}, \bibinfo{year}{2025}.
\newblock \bibinfo{title}{Weyl groups of groupoid {$\rm C^*$}-algebras}.
\newblock \bibinfo{journal}{J. Funct. Anal.} \bibinfo{volume}{289}, \bibinfo{pages}{Paper No. 111166, 60}.
\newblock \URLprefix \url{https://doi.org/10.1016/j.jfa.2025.111166}, \DOIprefix\doi{10.1016/j.jfa.2025.111166}.
%Type = Article
\bibitem[{Laca et~al.(2011)Laca, Raeburn and Ramagge}]{LRR11}
\bibinfo{author}{Laca, M.}, \bibinfo{author}{Raeburn, I.}, \bibinfo{author}{Ramagge, J.}, \bibinfo{year}{2011}.
\newblock \bibinfo{title}{Phase transition on {E}xel crossed products associated to dilation matrices}.
\newblock \bibinfo{journal}{J. Funct. Anal.} \bibinfo{volume}{261}, \bibinfo{pages}{3633--3664}.
\newblock \URLprefix \url{https://doi.org/10.1016/j.jfa.2011.08.015}, \DOIprefix\doi{10.1016/j.jfa.2011.08.015}.
%Type = Article
\bibitem[{Larsen and Li(2012)}]{LL12}
\bibinfo{author}{Larsen, N.S.}, \bibinfo{author}{Li, X.}, \bibinfo{year}{2012}.
\newblock \bibinfo{title}{The 2-adic ring {$C^\ast$}-algebra of the integers and its representations}.
\newblock \bibinfo{journal}{J. Funct. Anal.} \bibinfo{volume}{262}, \bibinfo{pages}{1392--1426}.
\newblock \URLprefix \url{https://doi.org/10.1016/j.jfa.2011.11.008}, \DOIprefix\doi{10.1016/j.jfa.2011.11.008}.
%Type = Article
\bibitem[{Li and Yang(2019)}]{LY19}
\bibinfo{author}{Li, H.}, \bibinfo{author}{Yang, D.}, \bibinfo{year}{2019}.
\newblock \bibinfo{title}{Boundary quotient {$\rm C^*$}-algebras of products of odometers}.
\newblock \bibinfo{journal}{Canad. J. Math.} \bibinfo{volume}{71}, \bibinfo{pages}{183--212}.
\newblock \URLprefix \url{https://doi.org/10.4153/cjm-2017-034-5}, \DOIprefix\doi{10.4153/cjm-2017-034-5}.
%Type = Article
\bibitem[{Li and Yang(2021)}]{LY21}
\bibinfo{author}{Li, H.}, \bibinfo{author}{Yang, D.}, \bibinfo{year}{2021}.
\newblock \bibinfo{title}{Self-similar {$k$}-graph {$C^*$}-algebras}.
\newblock \bibinfo{journal}{Int. Math. Res. Not. IMRN} , \bibinfo{pages}{11270--11305}\URLprefix \url{https://doi.org/10.1093/imrn/rnz146}, \DOIprefix\doi{10.1093/imrn/rnz146}.
%Type = Article
\bibitem[{Matsumoto and Tomiyama(1993)}]{MT93}
\bibinfo{author}{Matsumoto, K.}, \bibinfo{author}{Tomiyama, J.}, \bibinfo{year}{1993}.
\newblock \bibinfo{title}{Outer automorphisms on {C}untz algebras}.
\newblock \bibinfo{journal}{Bull. London Math. Soc.} \bibinfo{volume}{25}, \bibinfo{pages}{64--66}.
\newblock \URLprefix \url{https://doi.org/10.1112/blms/25.1.64}, \DOIprefix\doi{10.1112/blms/25.1.64}.
%Type = Article
\bibitem[{Matui(2012)}]{Matui12}
\bibinfo{author}{Matui, H.}, \bibinfo{year}{2012}.
\newblock \bibinfo{title}{Homology and topological full groups of \'etale groupoids on totally disconnected spaces}.
\newblock \bibinfo{journal}{Proc. Lond. Math. Soc. (3)} \bibinfo{volume}{104}, \bibinfo{pages}{27--56}.
\newblock \URLprefix \url{https://doi.org/10.1112/plms/pdr029}, \DOIprefix\doi{10.1112/plms/pdr029}.
%Type = Article
\bibitem[{Oyetunbi and Yang(2026)}]{OY26}
\bibinfo{author}{Oyetunbi, D.}, \bibinfo{author}{Yang, D.}, \bibinfo{year}{2026}.
\newblock \bibinfo{title}{Maximality and symmetry related to the 2-adic ring {$C^*$}-algebra}.
\newblock \bibinfo{journal}{J. Funct. Anal.} \bibinfo{volume}{290}, \bibinfo{pages}{Paper No. 111349, 34}.
\newblock \URLprefix \url{https://doi.org/10.1016/j.jfa.2026.111349}, \DOIprefix\doi{10.1016/j.jfa.2026.111349}.
%Type = Article
\bibitem[{Renault(2008)}]{Ren08}
\bibinfo{author}{Renault, J.}, \bibinfo{year}{2008}.
\newblock \bibinfo{title}{Cartan subalgebras in {$C^*$}-algebras}.
\newblock \bibinfo{journal}{Irish Math. Soc. Bull.} , \bibinfo{pages}{29--63}.
%Type = Book
\bibitem[{Robert(2000)}]{Rob00}
\bibinfo{author}{Robert, A.M.}, \bibinfo{year}{2000}.
\newblock \bibinfo{title}{A course in {$p$}-adic analysis}. volume \bibinfo{volume}{198} of \textit{\bibinfo{series}{Graduate Texts in Mathematics}}.
\newblock \bibinfo{publisher}{Springer-Verlag, New York}.
\newblock \URLprefix \url{https://doi.org/10.1007/978-1-4757-3254-2}, \DOIprefix\doi{10.1007/978-1-4757-3254-2}.
%Type = Article
\bibitem[{Rosenberg(1977)}]{Rosenberg77}
\bibinfo{author}{Rosenberg, J.}, \bibinfo{year}{1977}.
\newblock \bibinfo{title}{Amenability of crossed products of {$C\sp*$}-algebras}.
\newblock \bibinfo{journal}{Comm. Math. Phys.} \bibinfo{volume}{57}, \bibinfo{pages}{187--191}.
\newblock \URLprefix \url{http://projecteuclid.org/euclid.cmp/1103901289}.
%Type = Book
\bibitem[{Rudin(1976)}]{Rudin76}
\bibinfo{author}{Rudin, W.}, \bibinfo{year}{1976}.
\newblock \bibinfo{title}{Principles of mathematical analysis}.
\newblock International Series in Pure and Applied Mathematics. \bibinfo{edition}{third} ed., \bibinfo{publisher}{McGraw-Hill Book Co., New York-Auckland-D\"usseldorf}.
%Type = Phdthesis
\bibitem[{Somasunderam(2020)}]{Som20}
\bibinfo{author}{Somasunderam, N.}, \bibinfo{year}{2020}.
\newblock \bibinfo{title}{Fourier Analysis and Equidistribution on the {$p$}-adic Integers}.
\newblock Ph.D. thesis. Oregon State University.
%Type = Article
\bibitem[{Spielberg(2012)}]{Spi12}
\bibinfo{author}{Spielberg, J.}, \bibinfo{year}{2012}.
\newblock \bibinfo{title}{{$C^\ast$}-algebras for categories of paths associated to the {B}aumslag-{S}olitar groups}.
\newblock \bibinfo{journal}{J. Lond. Math. Soc. (2)} \bibinfo{volume}{86}, \bibinfo{pages}{728--754}.
\newblock \URLprefix \url{https://doi.org/10.1112/jlms/jds025}, \DOIprefix\doi{10.1112/jlms/jds025}.
%Type = Article
\bibitem[{Taibleson(1967)}]{Tailbleson67}
\bibinfo{author}{Taibleson, M.H.}, \bibinfo{year}{1967}.
\newblock \bibinfo{title}{Fourier series on the ring of integers in a {$p$}-series field}.
\newblock \bibinfo{journal}{Bull. Amer. Math. Soc.} \bibinfo{volume}{73}, \bibinfo{pages}{623--629}.
\newblock \URLprefix \url{https://doi.org/10.1090/S0002-9904-1967-11801-9}, \DOIprefix\doi{10.1090/S0002-9904-1967-11801-9}.
%Type = Book
\bibitem[{Urba\'nski et~al.(2022)Urba\'nski, Roy and Munday}]{URS22}
\bibinfo{author}{Urba\'nski, M.}, \bibinfo{author}{Roy, M.}, \bibinfo{author}{Munday, S.}, \bibinfo{year}{2022}.
\newblock \bibinfo{title}{Non-Invertible Dynamical Systems}. volume \bibinfo{volume}{69/1} of \textit{\bibinfo{series}{Exposition in Mathematics}}.
\newblock \bibinfo{publisher}{Walter de Gruyter GmbH, Berlin/Boston}.
\newblock \bibinfo{note}{Volume 1: Ergodic Theory – Finite and Infinite, Thermodynamic Formalism, Symbolic Dynamics and Distance Expanding Maps}.
%Type = Article
\bibitem[{Valente and Yang(2025)}]{VY25}
\bibinfo{author}{Valente, R.}, \bibinfo{author}{Yang, D.}, \bibinfo{year}{2025}.
\newblock \bibinfo{title}{Semigroups of self-similar actions and higher rank baumslag-solitar semigroups}.
\newblock \bibinfo{journal}{Proc. R. Soc. Edinb. A} \DOIprefix\doi{10.1017/prm.2025.10053}.
%Type = Article
\bibitem[{Yang(2010)}]{Yan10}
\bibinfo{author}{Yang, D.}, \bibinfo{year}{2010}.
\newblock \bibinfo{title}{Endomorphisms and modular theory of 2-graph {$C^*$}-algebras}.
\newblock \bibinfo{journal}{Indiana Univ. Math. J.} \bibinfo{volume}{59}, \bibinfo{pages}{495--520}.
\newblock \URLprefix \url{https://doi.org/10.1512/iumj.2010.59.3973}, \DOIprefix\doi{10.1512/iumj.2010.59.3973}.

\end{thebibliography}

%% else use the following coding to input the bibitems directly in the
%% TeX file.

%% Refer following link for more details about bibliography and citations.
%% https://en.wikibooks.org/wiki/LaTeX/Bibliography_Management

%\begin{thebibliography}{00}

%% For numbered reference style
%% \bibitem{label}
%% Text of bibliographic item

%\bibitem{lamport94}
%  Leslie Lamport,
%  \textit{\LaTeX: a document preparation system},
%  Addison Wesley, Massachusetts,
%  2nd edition,
%  1994
%\end{thebibliography}
\end{document}